\newcommand{\finiteG}{{\mathsf{G}}}
\newcommand{\finiteH}{{\mathsf{H}}}
\newcommand{\finiteC}{{\mathsf{C}}}
\newcommand{\finiteS}{{\mathsf{S}}}
\newcommand{\finiteT}{{\mathsf{T}}}
\newcommand{\finiteZ}{{\mathsf{Z}}}
\newcommand{\finiteM}{{\mathsf{M}}}
\newcommand{\finiteP}{{\mathsf{P}}}
\newcommand{\finiteB}{{\mathsf{B}}}
\newcommand{\pp}{{\mathfrak{p}}}
\DeclareMathOperator{\ind}{ind}
\DeclareMathOperator{\Int}{Int}
\DeclareMathOperator{\infl}{infl}
\DeclareMathOperator{\ext}{ext}
\DeclareMathOperator{\Hom}{Hom}
\DeclareMathOperator{\St}{St}
\DeclareMathOperator{\tr}{Tr}
\DeclareMathOperator{\stab}{stab}
\DeclareMathOperator{\JL}{JL}
\DeclareMathOperator{\Gal}{Gal}
\DeclareMathOperator{\BC}{BC}
\newcommand{\C}{{\mathbb{C}}}
\newcommand{\LL}{\mathcal{L}}
\newcommand{\OK}{\mathcal{O}}
\newcommand{\N}{{\mathcal{N}}}
\newcommand{\A}{\mathcal{A}}
\newcommand{\tA}{\widetilde{\A}}
\newcommand{\uG}{{\underline{G}}}
\newcommand{\tG}{{\widetilde{G}}}
\newcommand{\uC}{{\underline{C}}}
\newcommand{\uT}{{\underline{T}}}
\newcommand{\utG}{{\underline\tG}}
\newcommand{\uH}{{\underline{H}}}
\newcommand{\uJ}{{\underline{J}}}
\newcommand{\tH}{{\widetilde{H}}}
\newcommand{\tT}{{\widetilde{T}}}
\newcommand{\tC}{{\widetilde{C}}}
\newcommand{\tS}{{\widetilde{S}}}
\newcommand{\utH}{{\underline\tH}}
\newcommand{\utM}{{\underline\tM}}
\newcommand{\utC}{{\underline\tC}}
\newcommand{\utJ}{{\widetilde\uJ}}
\newcommand{\uM}{{\underline{M}}}
\newcommand{\tM}{{\widetilde{M}}}
\newcommand{\tB}{{\widetilde{B}}}
\newcommand{\tP}{{\widetilde{P}}}
\newcommand{\utP}{{\underline\tP}}
\newcommand{\tZ}{{\widetilde{Z}}}
\newcommand{\uZ}{{\underline{Z}}}
\newcommand{\uS}{{\underline{S}}}
\newcommand{\utZ}{{\underline{\tZ}}}
\newcommand{\ufG}{{\underline{\finiteG}}}
\newcommand{\tfG}{{\widetilde{\finiteG}}}
\newcommand{\tfB}{{\widetilde{\finiteB}}}
\newcommand{\utfG}{{\underline{\tfG}}}
\newcommand{\ufM}{{\underline{\finiteM}}}
\newcommand{\utfM}{{\widetilde{\underline{\finiteM}}}}
\newcommand{\utfS}{{\widetilde{\underline{\finiteS}}}}
\newcommand{\tfM}{{\widetilde{\finiteM}}}
\newcommand{\ufB}{{\underline{\finiteB}}}
\newcommand{\ufH}{{\underline{\finiteH}}}
\newcommand{\tfH}{{\widetilde{\finiteH}}}
\newcommand{\tfZ}{{\widetilde{\finiteZ}}}
\newcommand{\tfS}{{\widetilde{\finiteS}}}
\newcommand{\ufS}{{\underline{\finiteS}}}
\newcommand{\ufT}{{\underline{\finiteT}}}
\newcommand{\utfB}{{\underline{\widetilde\finiteB}}}
\newcommand{\utfT}{{\underline{\widetilde\finiteT}}}
\newcommand{\RTG}{R_\ufT^\ufG}
\newcommand{\B}{{\mathcal{B}}}
\newcommand{\D}{\mathcal{D}}
\newcommand{\F}{{\mathcal{F}}}
\newcommand{\reg}{^{\textup{reg}}}
\newcommand{\ereg}{^{\varepsilon\textup{-reg}}}
\newcommand{\conn}{^\circ}
\newcommand{\phm}{\phantom{-}}
\newcommand{\phprime}{\phantom{'}}
\newcommand{\inv}{^{-1}}
\newcommand{\lsup}[1]{{}^{#1}}
\newcommand{\mult}{^{\times}}
\newcommand{\phinv}{\phantom{{}\inv}}
\newcommand{\vphinv}{\vphantom{{}\inv}}
\newcommand{\E}{{\mathcal{E}}}
\newcommand{\reducible}[1]{(\ref{sec:depth-zero-G}PS--#1)}
\newcommand{\depthzerosc}[1]{(\ref{sec:depth-zero-G}SC--#1)}
\newcommand{\Hdepthzerosc}[1]{(\ref{sec:L-packets_u11}SC--#1)}
\newcommand{\cartan}[1]{(\ref{sec:cartan-G}--#1)}
\newcommand{\pslpacket}[1]{(\ref{sub:ps}--#1)}
\newcommand{\GL}{\textup{GL}}
\newcommand{\SL}{\textup{SL}}
\newcommand{\SO}{\textup{SO}}
\newcommand{\OO}{\textup{O}}
\newcommand{\PGL}{\textup{PGL}}
\newcommand{\UU}{U}
\newcommand{\set}[2]{
        {\left\{\left.
        #1\vphantom{#2\bigl(\bigr)}\,\right|
        \,#2\right\}}}
\newcommand{\smattwo}[4]{\left(
        \begin{smallmatrix}#1&#2\\#3&#4\end{smallmatrix}
        \right)}
\let\det\relax
\DeclareMathOperator{\det}{det}
\newtheorem{thm}{Theorem}[section] 
\newtheorem{prop}[thm]{Proposition} 
\newtheorem{lem}[thm]{Lemma} 
\theoremstyle{definition} 
\newtheorem{conj}[thm]{Conjecture} 
\newtheorem{hyp}[thm]{Hypothesis} 
\theoremstyle{remark}
\numberwithin{equation}{subsection}
\title{Depth-zero base change for ramified $U(2,1)$}
\date{\today}
\author{Jeffrey D.~Adler}
\address{Department of Mathematics and Statistics\\
American University\\
Washington, DC 20016-8050}
\email{jadler@american.edu}
\author{Joshua M.~Lansky} 
\address{Department of Mathematics and Statistics\\
American University\\
Washington, DC 20016-8050}
\email{lansky@american.edu}
\begin{document}
\subjclass{Primary 22E50.  Secondary 20G05, 20G25.}
\keywords{$p$-adic group, base change, Shintani lift, L-packet,
Langlands functoriality,
unitary group}

\begin{abstract}
  We give an explicit description of $L$-packets and quadratic base
  change for depth-zero representations of ramified unitary groups
  in two and three variables.  We show that this base
  change lifting is compatible with a certain lifting of families of
  representations of finite groups.
  We conjecture that such a compatibility is valid in much greater generality.
\end{abstract}

\maketitle


\section{Introduction}
\label{sec:intro}
Given a finite Galois extension $E/F$ of local fields
and a reductive algebraic $F$-group $\underline{G}$,
``base change'' is, roughly, a (sometimes only conjectural) 
mapping from (packets of)
representations 
of $G = \underline{G}(F)$
to those of $\underline{G}(E)$.
This mapping is essentially the same as the
Shintani lifting 
(as introduced in~\cite{shintani:sq-int}
and
extended
in~\cites{kawanaka:liftings-classical-1,
kawanaka:liftings-classical-2,
kawanaka:shintani,
gyoja:liftings,
digne:arcata,
digne:shintani
}
for finite groups)
in those cases in which both are known to exist
(see~\cites{langlands:gl2,%
flicker:gl3,%
arthur-clozel:trace,%
rogawski:u3,%
clozel:real_bc}).

Correspondences like base change that are associated to
the Langlands program can be difficult to describe explicitly,
even in cases where they are known to exist.
However, the Bernstein decomposition of the category of
smooth representations of $G$ gives rise to a partition
of the set of irreducible admissible representations,
and one hopes that base change is compatible with this
partition.
More specifically, given a Bushnell-Kutzko \emph{type}
\cite{bushnell-kutzko:smooth} that is conjecturally associated
to a piece of this partition, there should be a corresponding
type for a piece
of the corresponding partition of the representations of $\uG(E)$.
The work of
Bushnell and Henniart~\cites{%
bushnell-henniart:local-tame-1,%
bushnell-henniart:local-tame-2,%
bushnell-henniart:local-tame-4,%
bushnell-henniart:bc-glp%
}
provides evidence for such a compatibility when $\uG$ = $\GL(n)$;
and Blasco \cite{blasco:u3-bc-1} addresses the case of positive-depth
representations of $U(3)$.

Depth-zero representations are particularly interesting, since they
are associated to cuspidal representations of finite groups.
One can then ask whether base change for depth-zero representations
is compatible with base change for finite groups.
In the situation we consider in~\cite{adler-lansky:bc-u3-unram} and
the present article, there
is indeed such a compatibility.

To state this relationship, we briefly recall some notation.
Let $k_F$ and $k_E$ denote the residue fields of $F$ and $E$.
Given a point $x$ in the building of $\uG(F)$, one has
an associated parahoric subgroup $G_x \subseteq \uG(F)$.
This is a compact open subgroup, whose reductive quotient
is the group of rational points of a
connected reductive $k_F$-group $\ufG_x$
Thus, given a representation $\sigma$ of $\ufG_x(k_F)$,
one can inflate
to obtain a representation $\infl(\sigma)$ of $G_x$.
If $\utG$ is the restriction of scalars $R_{E/F}\uG$ of $\uG$,
then
one similarly has a parahoric subgroup $\tG_x \subseteq \utG = \uG(E)$, 
whose reductive quotient is $\widetilde\ufG_x(k_F)$.
If $E/F$ is unramified, then 
$\widetilde\ufG_x = R_{k_E/k_F} \ufG_x$,
so this last reductive quotient is equal to
$\ufG_x(k_E)$.

If $K$ is a compact open subgroup of $\uG(F)$ and $\rho$
is a smooth representation of $K$,
then we will say that a representation $\pi$ of $\uG(F)$ \emph{contains}
the pair $(K,\rho)$ if the restriction $\pi|_K$ contains $\rho$.
We will say that $\pi$ has \emph{depth zero}
if $\pi$ contains a pair of the form
$(G_x, \infl(\sigma))$, where $\sigma$
is an irreducible, cuspidal representation of $\ufG_x(k_F)$.
It makes sense to refer of \emph{depth-zero $L$-packets},
since one expects that either all or none of the representations
in a given $L$-packet will have depth zero.

\begin{conj}
\label{conj:unram}
Suppose $E/F$ is an unramified extension of local fields, 
and there is a base change lifting $\BC_{E/F}$ that takes $L$-packets
for $\uG(F)$ to $L$-packets for $\uG(E)$.
Let $\Pi$ denote a depth-zero $L$-packet for $\uG(F)$, and let $\pi\in \Pi$.
Suppose that $\pi$ contains $(G_x, \infl(\sigma))$, where $\sigma$
is an irreducible, cuspidal representation of
$\ufG_x(k_F)$.
Then some representation $\tilde\pi \in \BC_{E/F}(\Pi)$
contains
$(\tG_x,\infl(\BC_{k_E/k_F}(\sigma)))$,
where $\BC_{k_E/k_F}$ is the base change lifting
associated to the extension $k_E/k_F$
and the $k_F$-group $\ufG_x$.
\end{conj}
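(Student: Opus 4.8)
\emph{Proof proposal.}
The plan is to reduce the assertion, via the explicit construction of depth-zero $L$-packets, to the known compatibility of Deligne--Lusztig induction with base change of finite reductive groups; the guiding principle is that depth-zero representation theory is governed entirely by the finite reductive quotients $\ufG_x(k_F)$, so that base change at depth zero should be ``the same as'' base change for those finite groups.

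First I would attach to $\Pi$ a tamely ramified Langlands parameter $\varphi\colon W_F\to {}^L\uG$ (trivial on wild inertia, since $\Pi$ has depth zero), and use the construction of depth-zero $L$-packets --- for the groups relevant here this is the correspondence of Rogawski~\cite{rogawski:u3} for $U(2,1)$ and $U(3)$, and in general one appeals to DeBacker--Reeder and its extensions --- to record the finite data that $\varphi$ prescribes at the vertex $x$: a maximal torus $\ufT\subseteq\ufG_x$ arising from $\varphi|_{I_F}$, and a character $\theta$ of $\ufT(k_F)$ arising from the Frobenius action together with the tame part of $\varphi$, such that the cuspidal representation $\sigma$ appearing in $\pi$ is, up to sign, the Deligne--Lusztig representation $R_{\ufT}^{\ufG_x}(\theta)$.

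Next I would compute $\BC_{E/F}(\Pi)$ on the Galois side. Writing $f=[E:F]=[k_E:k_F]$, base change is the restriction $\varphi\mapsto\varphi|_{W_E}$, where $W_E=I_F\rtimes\langle\mathrm{Frob}_F^{f}\rangle$ has the same inertia but Frobenius replaced by its $f$-th power. Feeding $\varphi|_{W_E}$ back into the depth-zero construction for $\utG=R_{E/F}\uG$: the building is unchanged, the parahoric at $x$ is $\tG_x$ with reductive quotient $\widetilde\ufG_x(k_F)=\ufG_x(k_E)$ (using $\widetilde\ufG_x=R_{k_E/k_F}\ufG_x$ in the unramified case, as recalled above), and the finite data become the same torus $\ufT$, now viewed inside $\ufG_x(k_E)$, together with the character $\theta\circ N_{k_E/k_F}$ --- the norm map appearing precisely because Frobenius has been raised to the $f$-th power. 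Hence $\BC_{E/F}(\Pi)$ attaches to $x$ the cuspidal representation $\pm R_{\ufT}^{\ufG_x}(\theta\circ N_{k_E/k_F})$ of $\ufG_x(k_E)$. By the characterization of base change for finite reductive groups \cites{kawanaka:shintani,gyoja:liftings,digne:shintani} as the operation commuting with Deligne--Lusztig induction, one has $\BC_{k_E/k_F}(\sigma)=\pm R_{\ufT}^{\ufG_x}(\theta\circ N_{k_E/k_F})$; comparing, the member $\tilde\pi\in\BC_{E/F}(\Pi)$ built from the parahoric $\tG_x$ contains $(\tG_x,\infl(\BC_{k_E/k_F}(\sigma)))$, as required.

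\textbf{Main obstacle.} The conjecture presupposes an explicit grip on both sides it compares: one needs the depth-zero $L$-packet $\Pi$ and the map $\BC_{E/F}$ described concretely enough to extract the finite data used above, and in general neither is available, which is why the statement remains conjectural. Where such descriptions are in hand --- the unramified $U(3)$ of \cite{adler-lansky:bc-u3-unram}, and, via the ramified analogue, the groups of the present paper --- the residual difficulties are the delicate normalizations: the sign conventions in $R_{\ufT}^{\ufG_x}$, the simultaneous identification of a single finite torus $\ufT$ inside $\ufG_x(k_F)$ and $\ufG_x(k_E)$ compatibly with the norm, the labeling of packet members by characters of the component group on each side (needed to know which $\tilde\pi$ works, although for the statement as given mere existence within the packet suffices), and, in the non-generic cases --- special vertices for $U(2,1)$, or packets spread over several non-conjugate points $x$ --- handling cuspidal $\sigma$ that are not simply $\pm R_{\ufT}^{\ufG_x}(\theta)$, for which one must descend to the explicit character theory of the finite groups in question.
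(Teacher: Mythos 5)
The statement you address is Conjecture~\ref{conj:unram}, which the paper itself does not prove: it is stated as a conjecture, the only established cases being the unramified quadratic $U(2)$ and $U(2,1)$ treated in \cite{adler-lansky:bc-u3-unram} (and the ramified analogue, Conjecture~\ref{conj:main}, which is what the present paper actually establishes for $U(2,1)_{E/F}$). Your sketch lays out a reasonable strategy and honestly flags the obstructions, but the route you propose --- attach a tame Langlands parameter to $\Pi$, restrict to $W_E$, and read off Deligne--Lusztig data at each vertex --- is genuinely different from what the authors do in the cases they can handle. In \cite{adler-lansky:bc-u3-unram} and in \S\ref{sec:u21}--\S\ref{sec:K-types} here, no Langlands parameter appears: the packets are built from Rogawski's explicit endoscopy and base-change character identities, their members are pinned down by evaluating (twisted) characters at well-chosen regular elements via \cite{adler-lansky:bc-u3-unram}*{Prop.~7.1} and Lemma~\ref{lem:finite_torus}, and the $K$-type comparison is then carried out directly using \cite{moy-prasad:jacquet} and \cite{schneider-stuhler:sheaves}. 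Your approach would yield a uniform and conceptually transparent picture \emph{if} one had a depth-zero local Langlands correspondence of DeBacker--Reeder type and knew it to be compatible with Rogawski's base change --- but those inputs are precisely what is missing in general, which is why the authors work with character identities instead of parameters.

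One concrete place where your sketch is thinner than the actual verification: you treat $\sigma$ as $\pm R_\ufT^{\ufG_x}\theta$ for a torus character $\theta$, but already for $U(2,1)$ the reductive quotient $\finiteG_z\conn$ is $\SL(2,k)$, whose degree-$(q-1)/2$ cuspidals are irreducible \emph{halves} of a Deligne--Lusztig virtual character, not Deligne--Lusztig representations. Telling the two halves apart and matching them correctly under base change is exactly where the explicit argument has to labor (see Proposition~\ref{prop:A-packet} and the closing paragraph of \S\ref{sec:K-types}); a framework that works only at the level of $R_\ufT^{\ufG_x}\theta$ cannot see this distinction. You acknowledge the issue in your final paragraph, so the gap is flagged rather than hidden --- but it is a genuine missing step, not merely a matter of normalization, and it is one reason the $\varepsilon$-lifting of \S\ref{sub:lifting-finite} is formulated at the level of rational series rather than individual representations.
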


If $E/F$ is quadratic, and $\underline{G}$ is a unitary group
in two or three variables defined with respect to $E/F$, then
Rogawski~\cite{rogawski:u3} has shown that a base change lifting
exists, and has derived some of its properties.
The main theorem of \cite{adler-lansky:bc-u3-unram}
can be briefly stated as follows:
\begin{thm}
Conjecture \ref{conj:unram} is true when
$E/F$ is unramified quadratic,
and
$\uG$ is a unitary group in $2$ or $3$ variables defined
with respect to $E/F$.
\end{thm}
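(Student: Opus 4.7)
The plan is to prove the theorem by explicit case analysis, leveraging Rogawski's detailed construction of base change for $U(2,1)$ together with the Kawanaka--Digne description of Shintani lifting for finite reductive groups. The strategy has three parts: parametrize the depth-zero $L$-packets of $\uG(F)$ and the pairs $(G_x,\infl(\sigma))$ they contain via finite-group data; compute $\BC_{E/F}(\Pi)$ explicitly in each case using Rogawski's character identities; and compare with the finite-group lift $\BC_{k_E/k_F}(\sigma)$ to identify the required $\tilde\pi\in\BC_{E/F}(\Pi)$.

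First, I would enumerate the vertices $x$ in the building of $\uG(F)$ up to $\uG(F)$-conjugacy and identify the finite reductive quotients $\ufG_x$. For $U(2,1)$ and $U(1,1)$ unramified these are quasi-split unitary groups, or products of unitary groups, over $k_F$. The irreducible cuspidal representations of each $\ufG_x(k_F)$ are classified by Deligne--Lusztig theory, appearing (up to sign) as $R_{\ufT}^{\ufG_x}(\theta)$ for elliptic maximal $k_F$-tori $\ufT$ and characters $\theta$ in general position. Every depth-zero supercuspidal of $\uG(F)$ arises by compact induction from such a pair $(G_x,\infl(\sigma))$; the non-supercuspidal depth-zero representations appear as constituents of parabolically induced representations whose restrictions to $G_x$ again contain explicit pairs $(G_x,\infl(\sigma))$ coming from Deligne--Lusztig characters not in general position.

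For the matching, I would invoke Rogawski's explicit description of $\BC_{E/F}$: stable generic cuspidal packets lift to supercuspidals of $\utG(F)=\GL_n(E)$, while the endoscopic packets (notably the singletons on $U(2,1)$ arising from $U(2)\times U(1)$-endoscopy) lift to explicit parabolically induced representations. On the finite-group side, the crucial input is the Kawanaka--Digne formula: if $\sigma=\pm R_{\ufT}^{\ufG_x}(\theta)$, then $\BC_{k_E/k_F}(\sigma)=\pm R_{\widetilde\ufT}^{\widetilde\ufG_x}(\theta\circ\sfn)$, where $\widetilde\ufT=R_{k_E/k_F}\ufT$ and $\sfn$ is the norm map on tori. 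Since $E/F$ is unramified, $\widetilde\ufG_x=R_{k_E/k_F}\ufG_x$, so the pair $(\tG_x,\infl(\BC_{k_E/k_F}(\sigma)))$ is a genuine type in $\utG(F)$; the matching then reduces to tracing how the torus character $\theta$ parametrizing $\pi$ relates to the one parametrizing $\tilde\pi$, which is precisely what Rogawski's character identities yield (up to composition with the norm).

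The main obstacle, I expect, will be the non-supercuspidal and endoscopic cases of $U(2,1)$: here $\BC_{E/F}(\Pi)$ can be a reducible parabolically induced representation and $\Pi$ itself may have several members, so one must isolate the constituent $\tilde\pi$ that actually contains the prescribed pair. This requires an analysis of Jacquet modules at the parahoric level, rather than only a comparison of distribution characters. A secondary technicality is to verify that the various conjugacy-class parameters (the vertex $x$, the torus $\ufT$, and the character $\theta$) are compatible across the two sides under restriction of scalars; for unramified $E/F$ this is manageable, but must be made explicit in each case before the finite-group formula can be transported to the $p$-adic setting.
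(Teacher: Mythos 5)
Your outline follows essentially the same route as the authors' actual proof (which lives in the cited earlier paper \cite{adler-lansky:bc-u3-unram} rather than in the present article): enumerate the depth-zero packets case by case via Deligne--Lusztig data at the vertices, pin down the base change lifts using Rogawski's character identities and endoscopic transfers evaluated at very regular elliptic elements, and match against the finite-field lift, which on Deligne--Lusztig characters is composition of the torus character with the norm; the reducible and endoscopic cases are indeed handled by computing $G_{x+}$-fixed vectors (Moy--Prasad and Schneider--Stuhler), as you anticipate. The one ingredient your sketch leaves implicit is the standing hypothesis that base change and the relevant endoscopic transfers preserve depth (together with a lower bound on $|k_F|$ needed for the elementary torus-character lemmas), without which the process of elimination among depth-zero candidates does not close.
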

In order to prove this theorem, we had to make several assumptions.

We assumed $|k_F|>59$, but only to avoid some technical complications.
A weaker hypothesis is probably sufficient.
In order to be able to make use of Rogawski's construction of the
quadratic base
change lifting for $U(2,1)$, \cite{rogawski:u3},
we assumed that $F$ has characteristic zero.
We also needed to assume that base change for unitary groups
in two and three variables, as well as various endoscopic transfers,
preserve depth, at least for depth-zero $L$-packets.

In the present paper, we consider the case where $E/F$ is ramified.
Now the problem is completely different,
since we no longer have that
$\widetilde\ufG_x = R_{k_E/k_F} \ufG_x$.

\begin{conj}
\label{conj:main}
Suppose $E/F$ is a cyclic, tamely ramified extension of local fields,
and there is a
base change lifting $\BC_{E/F}$ that takes $L$-packets
for $\uG(F)$ to $L$-packets for $\uG(E)$.
Let $\Pi$ denote a depth-zero $L$-packet for $\uG(F)$, and let $\pi\in \Pi$.
Suppose that $\pi$ contains $(G_x, \infl(\sigma))$, where $\sigma$
is an irreducible, cuspidal representation of $\ufG_x(k_F)$.
Then some representation $\tilde\pi \in \BC_{E/F}(\Pi)$
contains
$(\tG_x,\infl(\tilde\sigma))$
for some irreducible representation $\tilde\sigma$ of $\tfG_x(k_F)$ contained
in $\ell(\sigma)$.
\end{conj}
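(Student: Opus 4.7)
The plan is to prove Conjecture \ref{conj:main} in the specific case treated by the paper, namely $E/F$ ramified quadratic and $\uG$ a unitary group in two or three variables, by a parahoric-by-parahoric analysis that parallels the unramified treatment of \cite{adler-lansky:bc-u3-unram} but accounts for the very different structure of the reductive quotient $\tfG_x$. First I would enumerate, up to $\uG(F)$-conjugacy, the vertices $x$ whose parahoric $G_x$ supports a cuspidal representation of $\ufG_x(k_F)$. For ramified $U(2,1)$ the Bruhat--Tits building is of relative rank one, so there are exactly two vertex classes to consider, and for $U(1,1)$ the situation is simpler but structurally parallel. In each case I would write down explicitly $\ufG_x$ over $k_F$, the $k_F$-group $\tfG_x$ (which now is \emph{not} $R_{k_E/k_F}\ufG_x$), and the natural map at the level of $k_F$-points.

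Next I would pin down the finite-group lifting $\ell$. Each cuspidal $\sigma$ on $\ufG_x(k_F)$ corresponds via Deligne--Lusztig theory to a pair $(\ufT,\theta)$ with $\ufT\subset \ufG_x$ an elliptic maximal torus and $\theta$ a character in general position. One identifies a natural embedding of $\ufT$ into $\tfG_x$; composition of $\theta$ with a suitable norm map produces a character of the image torus, from which a Deligne--Lusztig construction yields the irreducible constituents of $\ell(\sigma)$. I would then turn to Rogawski's explicit classification \cite{rogawski:u3} of tempered $L$-packets for $U(2,1)$ and their base change images to $\GL_3(E)$, running through the depth-zero packets $\Pi$ case by case. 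The supercuspidal members of $\Pi$ are compactly induced from $\infl(\sigma)$ on some $G_x$; non-supercuspidal depth-zero members are obtained via parabolic induction from a Levi factor, whose base change behavior is more transparent. In each case I would compute $\BC_{E/F}(\pi)$ and exhibit some $\tilde\pi\in\BC_{E/F}(\Pi)$ containing $(\tG_x,\infl(\tilde\sigma))$ for $\tilde\sigma\in\ell(\sigma)$, either by directly restricting $\tilde\pi$ to $\tG_x$ or by a Mackey-type intertwining argument.

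The principal obstacle will lie in the finite-group step: in the ramified setting there is no Shintani-type descent from $\tfG_x(k_F)$ to $\ufG_x(k_F)$, since $\tfG_x$ is not a base change of $\ufG_x$, so the construction and justification of $\ell$ must go through the comparison of tori and characters above, and one must verify on the nose that it matches Rogawski's base change up to the ambiguity inherent in an $L$-packet. A secondary difficulty is the combinatorics of associating vertices of the building of $\utG$ to those of $\uG(F)$ when $E/F$ is ramified, where the Iwahori--Weyl indices behave differently from the unramified case and the compatibility claimed in the conjecture must be checked for each vertex class separately.
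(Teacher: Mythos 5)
Your high-level plan matches the skeleton of the paper's proof of Theorem~\ref{thm:main}: identify the vertices $y,z$ supporting cuspidal data, describe $\tfG_x$ explicitly (and observe that it is no longer $R_{k_E/k_F}\ufG_x$), construct $\ell$ via norm maps on tori and Deligne--Lusztig theory, feed in Rogawski's base change, and verify $K$-type compatibility via Mackey theory. But the plan glosses over what is actually the hardest and most delicate part of the argument, and as written it would not close.

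The phrase ``run through Rogawski's explicit classification of tempered $L$-packets\dots\ case by case'' hides the main gap. Rogawski describes the $L$-packets abstractly (as fibers of transfers from endoscopic data, or as constituents of principal series); the paper must independently determine, at the level of $K$-types, \emph{which} depth-zero representations of $G$ are grouped together in a packet. For the two-element supercuspidal packets this is Proposition~\ref{prop:sc_card2}: one has to compute twisted character values on very regular elements of the ramified quadratic tori $\underline S$ and $\underline S'$ attached to the two vertices, use the endoscopic character identity for $\Xi_G$, and then invoke Lemma~\ref{lem:finite_torus} to deduce that the two members of the packet are exactly $\pi_y(\chi,\nu)$ and $\pi_z(\chi,\nu)$ --- this is a genuine pinning-down step and is why the paper needs the hypotheses $q>3$, $q\ne 7$. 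Your proposal has no mechanism for it; ``directly restricting $\tilde\pi$ to $\tG_x$'' only tells you what $K$-types $\tilde\pi$ contains, not which $\pi$'s form an $L$-packet in the first place.

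A second gap: you implicitly treat each packet $\Pi$ as either entirely supercuspidal or entirely principal-series. In fact the crucial depth-zero packets of $U(2,1)$ are the mixed packets $\{\pi^s(\lambda),\pi^2(\lambda)\}$ (and the $A$-packets $\{\pi^s(\lambda),\pi^n(\lambda)\}$) containing one supercuspidal and one principal-series constituent. Identifying the cuspidal type of $\pi^s(\lambda)$ in terms of $\lambda$ is Proposition~\ref{prop:A-packet}; the paper's argument uses the Schneider--Stuhler alternating-sum formula for characters on unipotent elements of the cubic torus, comparing against the endoscopic transfer $\theta_\xi^G$. This is not ``more transparent'' --- it is a substantial computation that your plan omits entirely. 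Finally, you never mention that the whole endoscopic machinery is only available under an assumption of depth preservation for the transfers $\Xi_G,\Xi_H$ (Hypothesis~\ref{hyp:depth}), and that the anisotropic $U(2)$ case requires postulating a Jacquet--Langlands-style correspondence (\S\ref{sec:bc-u2}); without naming these inputs the plan is not self-contained.
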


Here, $\ell(\sigma)$ is a collection of representations of
$\widetilde\finiteG_x(k_F)$
that arises via a certain lifting (which we will later
call the \emph{$\varepsilon$-lifting}) of representations of finite
reductive groups.
When $E/F$ is unramified,
$\ell$ is compatible with the base change lifting
$\BC_{k_E/k_F}$.
The case where $E/F$ is ramified quadratic and $\uG$
is a unitary group in $2$ or $3$ variables
gives rise to liftings for several finite groups.  In the 3-variable
case, we describe these
liftings explicitly in \S\ref{sec:K-types}.
We prove basic properties of $\ell$ in general elsewhere
(see \cite{adler-lansky:lifting}).

Our main theorem is the following:

\begin{thm}
\label{thm:main}
Suppose Hypothesis \ref{hyp:depth}.
Conjecture \ref{conj:main} is true when $F$ has characteristic zero,
the order of $k_F$ is greater than $3$ and not equal to $7$,
$E/F$ is a totally and tamely ramified quadratic extension,
and $\uG$ is the quasisplit unitary group $U(2,1)_{E/F}$ associated to
$E/F$.
\end{thm}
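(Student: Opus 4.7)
The plan is to proceed by a case-by-case analysis of the depth-zero $L$-packets $\Pi$ of $\uG(F) = U(2,1)_{E/F}(F)$, exploiting the fact that $U(2,1)$ has low rank and that the local building has only two vertex orbits under $\uG(F)$. First I would enumerate, up to $\uG(F)$-conjugacy, all depth-zero types $(G_x,\infl(\sigma))$ contained in irreducible admissible representations: these are the cuspidal types at the two maximal parahorics (giving rise to supercuspidal representations) and the Iwahori-level types (giving rise to constituents of unramified principal series). The finite reductive quotients $\ufG_x(k_F)$ at the two vertex types in the ramified case are not $U(2,1)(k_F)$ but smaller groups described explicitly in \S\ref{sec:K-types}; I would record the cuspidal representations of each via Deligne--Lusztig theory and group them into depth-zero $L$-packets using the DeBacker--Reeder construction in the supercuspidal case and the standard Hecke-algebra analysis in the principal series case.

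Next, for each depth-zero $L$-packet $\Pi$, I would apply Rogawski's construction \cite{rogawski:u3} of quadratic base change for $U(2,1)$ to identify $\BC_{E/F}(\Pi)$ as an $L$-packet of $\utG = \GL_3(E)$. Under Hypothesis \ref{hyp:depth}, every $\tilde\pi \in \BC_{E/F}(\Pi)$ has depth zero, hence contains a pair $(\tG_x,\infl(\tilde\sigma))$ for some parahoric $\tG_x$ of $\GL_3(E)$ and some irreducible $\tilde\sigma$ of the reductive quotient $\tfG_x(k_F)$; note that $k_E = k_F$ since $E/F$ is ramified, so that $\tfG_x$ is a (generally proper) Levi of $\GL_3$ over $k_F$ corresponding to a Galois-fixed facet of the $\GL_3(E)$-building. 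By tracking which parahoric of $\uG(F)$ sits inside which parahoric of $\utG$ under the natural inclusion, I would select a candidate $\tG_x$ compatible with the given $G_x$ and identify a specific $\tilde\pi$ along with the restriction of its type to $\tG_x$. It then remains to verify that $\tilde\sigma$ lies in $\ell(\sigma)$, where the $\varepsilon$-lifting $\ell$ is the one computed in \S\ref{sec:K-types} for the relevant pairs $(\ufG_x,\tfG_x)$; this is a finite check once the pair $(\sigma,\tilde\sigma)$ is pinned down.

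I expect the main obstacle to lie in the supercuspidal case. Rogawski describes base change indirectly, by twisted character identities coming from the twisted trace formula, so reading off a type contained in a specified member of $\BC_{E/F}(\Pi)$ is not automatic: it requires combining his character identities with Hypothesis \ref{hyp:depth} and an explicit analysis of depth-zero supercuspidals of $\GL_3(E)$ inflated from non-hyperspecial parahorics, in order to match the finite-group data on both sides. A secondary subtlety is that the supercuspidal $L$-packets for ramified $U(2,1)$ can be singletons or have two members, and the base-change image can be a representation induced from a proper parabolic of $\GL_3(E)$; matching types requires care in the latter case. The hypotheses $|k_F|>3$ and $|k_F|\neq 7$ enter to exclude small-field pathologies in the Deligne--Lusztig theory underlying the cuspidal representations of $\ufG_x(k_F)$ and in the definition of $\ell$ given by \cite{adler-lansky:lifting}.
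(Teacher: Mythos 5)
Your overall architecture (enumerate the depth-zero packets, compute their base change lifts explicitly, then match types at the level of finite reductive quotients) is the paper's architecture, but two of your key steps do not go through as stated.

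First, you cannot invoke Hypothesis~\ref{hyp:depth} to conclude that every $\tilde\pi\in\BC_{E/F}(\Pi)$ has depth zero: the hypothesis deliberately lists only the endoscopic transfers from $(U(1,1)\times U(1))(F)$ to $U(2,1)(F)$ and from $(U(1)^{3})(F)$ to $(U(1,1)\times U(1))(F)$, and explicitly excludes the base change lifting from $U(2,1)(F)$ to $\GL(3,E)$. The argument therefore cannot proceed by ``elimination among depth-zero representations of $\GL_3(E)$.'' Instead one must compute $\BC_{E/F}(\Pi)$ directly: in the supercuspidal case the packet $\Pi=\{\pi_y,\pi_z\}$ is first identified (Proposition~\ref{prop:sc_card2}) as the transfer $\Xi_G(\{\rho\})$ of a singleton packet of the endoscopic group $\uH$, and then Rogawski's Proposition~13.2.2(c) expresses its base change lift as $\ind_{\tP}^{\tG}$ of the labile base change lift of $\rho$, which is computed via the $U(1,1)$ result (Proposition~\ref{prop:bc_H_sc}) and is manifestly of depth zero. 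Depth preservation for $\BC_{E/F}$ on $U(2,1)$ is an output of this computation, not an input.

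Second, the DeBacker--Reeder construction is not available here: the elliptic tori supporting the depth-zero supercuspidals of ramified $U(2,1)$ (types \cartan{2} and \cartan{3}) are ramified, so these packets are not parametrized by tame regular semisimple elliptic parameters. The grouping of supercuspidals into $L$-packets is instead forced by stability: one shows via \cite{adler-lansky:bc-u3-unram}*{Prop.~7.1} and the Deligne--Lusztig character formula (Lemma~\ref{lem:char_values}) that no depth-zero supercuspidal character is stable, then uses Rogawski's endoscopic identities $\theta_\rho^G=\theta_{\pi_1}-\theta_{\pi_2}$ together with Lemma~\ref{lem:finite_torus} to pin down the two members as $\pi_y(\chi,\nu)$ and $\pi_z(\chi,\nu)$; the exclusions $q>3$ and $q\neq 7$ arise precisely in this character-separation lemma (and in its use in Proposition~\ref{prop:bc_H_sc}), not in the definition of the $\varepsilon$-lifting. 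Finally, note that the packets $\{\pi^2(\lambda),\pi^s(\lambda)\}$ mixing a principal-series constituent with a supercuspidal require a separate argument (Proposition~\ref{prop:A-packet}, using the Schneider--Stuhler character formula on the alcove and the two vertices) that your case division does not anticipate.
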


However, we go somewhat farther, describing base change explicitly
for each depth-zero $L$-packet for $U(2,1)_{E/F}$
(see \S\ref{sec:base_change}).

As in \cite{adler-lansky:bc-u3-unram},
we assume that $F$ has characteristic zero only so that we
can apply results of Rogawski~\cite{rogawski:u3}.  Our calculations apply
equally well if $F$ is a function field of characteristic more than $3$.

In order to prove the theorem, we need to assume the following.

\begin{hyp}
\label{hyp:depth}
All representations within an $L$-packet have the same depth.
Considering unitary groups with respect to
a totally, tamely ramified quadratic extension $E/F$, 
the following endoscopic transfers,
defined with respect to a depth-zero character
(see \S4.8 of \cite{rogawski:u3}),
take depth-zero (resp.\ positive-depth) $L$-packets
to depth-zero (resp.\ positive-depth) $L$-packets.
\begin{itemize}
\item
from $(U(1,1)\times U(1))(F)$ to $U(2,1)(F)$;
\item
from $(U(1)\times U(1)\times U(1))(F)$
to $(U(1,1)\times U(1))(F)$.
\end{itemize}
\end{hyp}


Note that we do not include quadratic base change for $U(1,1)(F)$ or
$U(2,1)(F)$ (as defined in~\cite{rogawski:u3}) among the transfers
listed above.

We can leave out the quadratic base change lifting from $U(1,1)(F)$
to $\GL(2,E)$
because
we have just received a work of Blasco \cite{blasco:u11-bc}
that describes this base change.
In particular, Th\'eor\`eme 4.5 \emph{loc.\ cit.}\ implies
that depth is preserved in the above sense.


The quadratic base change lifting from $U(2,1)(F)$ to $\GL(3,E)$
is not included in the hypothesis because
all depth zero representations of $U(2,1)(F)$
are obtained via parabolic induction from a maximal torus or via transfer from
$(U(1,1)\times U(1))(F)$.
As a result, the base change lifts of depth-zero $L$-packets of
$U(2,1)(F)$ can be expressed in terms of the base change lifts of
associated packets of these simpler groups (see
Propositions~\ref{prop:bc_ps},~\ref{prop:bc_sc},
and~\ref{prop:bc_card_2}) for which depth-zero preservation is known.
(For unramified $U(2,1)$, this is not the case,
and therefore in~\cite{adler-lansky:bc-u3-unram}, the above assumption had
to be made for the base change lifting for $U(2,1)(F)$ as well.)

More generally, one expects that base change will multiply
depth by the ramification degree of $E/F$
(see~\cite{lansky-raghuram:depth}), but we do not need such a strong
statement here.


We expect that other correspondences of representations
can be made more explicit by examinations of compact open data.
For example, Silberger and Zink
\cite{silberger-zink:level-zero-matching}
have made the Abstract Matching
Theorem~\cites{dkv:amt,rogawski:amt,badulescu:jacquet-langlands}
explicit for depth-zero discrete series representations,
and Pan \cite{pan:theta-depth-zero} has shown that the theta
correspondence for depth-zero representations is compatible
with that for finite groups.

The structure of this article is as follows.  In
\S\ref{sec:notation-general}, we present our notation, as well as
some basic facts about Bruhat-Tits buildings and restriction of
scalars.   We fix a choice of nonarchimedean local field $F$.
In \S\ref{sec:liftings}, we review the general
notion of Shintani lifting for reductive groups over local and finite
fields.  Given a quasi-semisimple automorphism $\varepsilon$ of a finite
reductive group, we also describe the corresponding
$\varepsilon$-lifting of (families of) representations.
In \S\ref{sec:unitary-groups},
we fix a ramified quadratic extension $E/F$,
an associated ramified unitary group $\uG = U(2,1)$,
and a particular $F$-automorphism $\varepsilon$ of $\utG = R_{E/F}\uG$.
We discuss the structure of these groups.
We also
describe the reductive quotients of the stabilizer groups of points in
the Bruhat-Tits building of $\uG$ and classify their representations.
We then use this to describe the representations of $\uG(F)$ of depth
zero.  

In \S\ref{sec:u11},
we give an explicit description of the depth-zero $L$-packets
for ramified unitary groups in two variables,
as well as their base change lifts to $\GL(2,F)$.
There are two reasons to do so.
First, in order to describe explicit base change for all
representations of $U(2,1)$
(not just of depth zero), one would need to understand depth-zero base change
for these smaller groups, since they arise in
the constructions by J.-K. Yu \cite{yu:supercuspidal}
and S. Stevens \cite{stevens:classical-sc}
of positive-depth
supercuspidal representations.
Second, some of our understanding of base change
for depth-zero representations comes via endoscopy,
and $U(1,1) \times U(1)$
is an endoscopic group for $U(2,1)$.
In order to handle the case of the anisotropic group $U(2)$, we
must assume the existence of a Jacquet-Langlands-like correspondence
between depth-zero, discrete-series representations of $U(1,1)$
and those of $U(2)$.
Conjecturally, such a correspondence
is determined by the theta correspondence between $U(2)$ and $U(1,1)$.
See \S\ref{sec:bc-u2}.

In \S\ref{sec:u21}, we explicitly determine the
depth-zero $L$-packets and $A$-packets for $\uG(F)$.
Certain $A$-packets and $L$-packets of $G$ have size $2$ and contain both a
supercuspidal and a non-supercuspidal representation.
Such packets
are parametrized by certain (Weyl equivalence classes of) characters $\lambda$
of the diagonal torus of $U(2,1)(F)$.
Proposition~\ref{prop:A-packet}
completely determines the relationship between the two representations
in such packets, and this is sufficient for the verification of
this case of Theorem~\ref{thm:main} in \S\ref{sec:base_change}.
In particular, an understanding of the explicit dependence of the elements
of such a packet on the character $\lambda$ is not required.
For completeness, we provide such a description anyway
in Proposition~\ref{prop:A-packet_explicit}.
The proof, which we do not include here,
involves computations
of the
values of certain irreducible characters of $\SL(2,k_F)$ on unipotent
elements.

In
\S\ref{sec:base_change}, we determine the base change lift of each of
the depth-zero packets of $\uG(F)$.
In \S\ref{sec:K-types} we verify that 
the types of the representations in a given packet are related
to those of the base change lift of the packet according to
Theorem~\ref{thm:main}.

The authors thank James Arthur, Martin H. Weissman,
and Wee-Teck Gan
for helpful conversations;
an anonymous referee for useful comments;
and the National Security Agency (H98230-07-1-002)
and the National Science Foundation (DMS-0854844)
for partial support.

\section{General notation and facts}
\label{sec:notation-general}
Given a field $K$, we will denote its algebraic closure by $\overline K$.
We will generally use underlined letters to denote
algebraic groups, and corresponding ordinary letters to denote
groups of rational points, where the field of rationality will be clear
from the context.
For any algebraic $K$-group $\uH$, we will let $\uH\conn$ denote the
connected component.

We will say that an element
$h$ of $\uH$ is \emph{$\varepsilon$-semisimple}
if $h\varepsilon$ is a semisimple
element of the semidirect product
$\uH\rtimes\langle\varepsilon\rangle$.
Let $h$ be such an element, and let $\uC$ be the fixed point group of
$\Int(h)\circ\varepsilon$ in $\uH$.  Then we will call $h\in\uH$
\emph{$\varepsilon$-regular} (resp.~\emph{strongly $\varepsilon$-regular})
if $\uC\conn$ (resp.~$\uC$) is a torus.
If $\varepsilon$
is trivial we will drop the $\varepsilon$ in the above terminology.
Note that $h$ is $\varepsilon$-regular if and only if $h\epsilon$ is a
regular element of $\uH\rtimes\langle\varepsilon\rangle$.
Let $\uH\ereg$ denote
the set of $\varepsilon$-regular elements of $\tH$.  If $\varepsilon$
is trivial, this is just the set $\uH\reg$ of regular elements of $\uH$.

Given a finite-order $K$-automorphism $\varepsilon$ of an algebraic
$K$-group $\uH$, we will say that two strongly regular elements $g,h\in H$
are \emph{$\varepsilon$-conjugate} (resp.~\emph{stably
  $\varepsilon$-conjugate}) if
$h=xg\varepsilon(x)^{-1}$ for some $x$ in $H$
(resp.~$\uH(\overline K)$).  

Let $\uG$ be an algebraic $K$-group, and let $L/K$ be a finite extension.
Let $\utG =R_{L/K}(\uG)$, the algebraic $K$-group obtained from
$\uG$ via restriction of scalars from $L$ to $K$.
Then $\utG(K)$ can be identified with
$\uG(L)$.  Moreover, $\utG$ is $L$-isomorphic to the direct
product $\uG\times\cdots\times\uG$ of $[L:K]$ copies of $\uG$.  We
will be interested in the case where $L/K$ is
cyclic.  In this case, let $\iota$ be a generator of $\Gal(L/K)$.
Then  $\utG$ is equipped with a $K$-automorphism $\varepsilon$ whose action on
$\utG(K)$ corresponds to that of $\iota$ on $\uG(L)$.  Moreover, the
fixed-point
group $\utG^\varepsilon$ can be identified with $\uG$.

If $L/K$ is a quadratic extension,
we let $\UU (1,L/K)$ denote
the kernel of the norm map $R_{L/K}(\GL(1))\rightarrow\GL(1)$ of
$K$-groups given by $x\mapsto x\varepsilon (x)$, where $\varepsilon$
is the automorphism of $R_{L/K}(\GL(1))$ corresponding to the
nontrivial element of $\Gal(L/K)$.
We will sometimes denote $\UU(1,L/K)(K)$ by $L^1$ when
$K$ is understood.

For any reductive algebraic group $\uG$ defined
over either a finite or nonarchimedean local field $K$,
we have the following notation and terminology.  (Here
we use $G$ to denote the group of rational points of $\uG$.)
\begin{itemize}
\item
$\E(G)$ will denote the set of irreducible representations of $G$.
\item
$\mathbf{1}_G$ will denote the trivial representation of $G$.
\item
$\St_G$ will denote the Steinberg representation of $G$.
\item For any representation $\sigma$ of a subgroup $H$ of $G$,
$\ind_H^G\sigma$ will denote the representation of $G$ obtained from
$\sigma$ via normalized compact induction.
If $\sigma$ is irreducible, then $\omega_\sigma$ will denote
the central character of $\sigma$.
\item
For any admissible, finite-length representation
$\pi$ of $G$,
let $\theta_\pi$ denote the character of $\pi$,
considered either as a function on the set of elements or conjugacy classes
of $G$ (of $G\reg$ in the local-field case),
or as a distribution
on an appropriate function space on $G$.
\item
Suppose $\varepsilon$ is an automorphism of $G$.
Then
$\varepsilon$ acts in a natural way on the set of equivalence
classes of irreducible, admissible representations of $G$.
Suppose
$\pi$ is such a representation and $\pi\cong \lsup\varepsilon\pi$.
Let
$\pi(\varepsilon)$ denote an intertwining operator from $\pi$ to
$\lsup\varepsilon\pi$.
If $\varepsilon$ has order $\ell$, then we can
and will normalize $\pi(\varepsilon)$ by requiring that the scalar
$\pi(\varepsilon)^\ell$ equal $1$.
Then $\pi(\varepsilon)$ is well
determined up to a scalar $\ell$th root of unity.
The
\emph{$\varepsilon$-twisted character} of $\pi$ is the distribution
$\theta_{\pi,\varepsilon}$ defined by
$\theta_{\pi,\varepsilon}(f) = \tr(\pi(f)\pi(\varepsilon))$
for $f\in C_c^\infty(G)$,
the space of compactly supported, locally constant functions on $G$.
As
with the character, the twisted character can be represented by a
function (again denoted $\theta_{\pi,\varepsilon}$) on $G$ ($G\ereg$
in the local-field case), or on its set of
$\varepsilon$-conjugacy classes.

\item 
A class function $f:G\rightarrow \C$ will be called \emph{stable} if
it is constant on every stable conjugacy class of $G$.

\item
A locally constant function $f$ on $G$ will be called an
\emph{$\varepsilon$-stable class function},
or simply \emph{$\varepsilon$-stable}, if
$f(x)=f(y)$ for all $x,y\in G$ such that $y = gx\varepsilon (g)^{-1}$
for some $g\in\uG(\overline F)$.

\item
For any maximal $K$-torus $\uT$ of a $K$-group $\uG$,
let $W(T,G)$ denote the quotient of $T$ in its normalizer in $G$, and
let $W_K(\uT,\uG)$ denote the group of $K$-points of the absolute
Weyl group $N_{\uG}(\uT)/\uT$.
\end{itemize}

Now fix a nonarchimedean local field $F$ of residual characteristic $p$.
For any algebraic extension $E/F$, let $\OK_E$ denote
the ring of integers of $E$, $\mathfrak{p}_E$ the prime ideal
in $\OK_E$, and $k_E= \OK_E/\mathfrak{p}_E$ the residue field.
Let $q_E$ denote the order of $k_E$.
Let $|\cdot |_E$ denote the absolute value on $E$,
normalized so that primes in $\OK_E$ have
absolute value $q_E\inv$.  In the case $E=F$, we will drop the
subscripts $E$ above. 
For any abelian extension $E/F$, let
$\omega_{E/F}$ denote the character of $F\mult$
associated to $E/F$ by local class
field theory.

Let $\widehat{\uG}$ denote the complex dual group of $\uG$.  The
$L$-group of $\uG$ is the group $\lsup L\uG = \widehat\uG\rtimes W_F$,
where $W_F$ is the Weil group of $F$.
We note that if $\utG = R_{E/F}\tG$, then
the dual group of $\utG$ is the direct product of $[E:F]$
copies of $\widehat\uG$,
and we have a natural diagonal embedding of $\lsup L \uG$ in $\lsup
L\utG$.

For every reductive algebraic
$F$-group $\uG$,
and every algebraic extension $E/F$ of finite ramification degree,
one has an associated
\emph{extended affine building} $\B(\uG,E)$,
as defined by Bruhat and
Tits~\cites{bruhat-tits:reductive-groups-1,bruhat-tits:reductive-groups-2}.
As a $\uG(E)$-set,
$\B(\uG,E)$ is a direct product of an affine space (on which
$\uG\conn(E)$ acts via translation) and the \emph{reduced building}
$\B^{\text{red}}(\uG,E)$, which depends only on
$\uG/\uZ$, where $\uZ$ is the center of
$\uG$.
Note that $\uZ(E)$ fixes
$\B^{\text{red}}(\uG,E)$.
The building $\B(\uG,F)$ always has a natural embedding
into $\B(\uG,E)$.
To every point $x\in\B(\uG,E)$,
there is an associated
parahoric subgroup $\uG(E)_x$
of $\uG(E)$.
The stabilizer $\stab_{\uG(E)}(x)$ of $x$ in $\uG(E)$
contains $\uG(E)_x$ with finite index.

The pro-$p$-radical of $G_x$ is denoted $G_{x+}$,
and the quotient $\stab_G(x)/G_{x+}$ is the group
$\finiteG_x$ of
rational points of a reductive $k_F$-group
$\ufG_x$.
The quotient $G_x/G_{x+}$ is the group
$\finiteG_x\conn$ of
rational points of $(\ufG_x)\conn$.
(Note that it is more common to use $\ufG_x$
to denote what we are calling $\ufG_x\conn$.)

More generally, one could deal with parahoric subgroups of $\uG(E)$,
and could form the quotient
$\stab_{\uG(E)}(x)/\uG(E)_{x+}$,
which is isomorphic to the group of rational points of
a reductive $k_E$-group that we will for the moment denote $\ufG_x^E$
(we will not need this notation later).
Note that if $E/F$ is unramified,
then $\ufG_x^E=R_{k_E/k_F}\ufG_x$,
so that $\finiteG_x^E=\ufG_x(k_E)$.
Moreover, we may identify $\Gamma$ with $\Gal(k_E/k_F)$,
and the resulting actions of $\Gamma$ on
$\finiteG_x^E$ and on $\ufG_x(k_E)$
are the same.
But when $E/F$ is ramified, the situation is more complicated.
See \S\ref{sub:reductive-quotients} for examples.

These objects do not depend specifically on $x$.
Rather, they depend on the facet $\F$ in $\B(\uG,E)$ containing $x$.
Thus, we will feel free to replace $x$ by $\F$ in the subscripts above.
In case $\uG\conn$ is a torus,
the building contains only one facet, and
we may write $G_0$, $G_{0+}$, and
$\ufG$ instead of $G_x$, $G_{x+}$, and
$\ufG_x$, since the latter are independent
of the choice of $x$.

In general, given an $E$-group and a point or facet in
the building, we will use sans-serif lettering to denote the
corresponding $k_E$-group.

Suppose $E/F$ is a Galois extension, and let $\Gamma=\Gal(E/F)$.
If $E/F$ is tamely ramified, then by Corollary~2.3
of \cite{yu:supercuspidal},
the group of fixed points $(\finiteG_x^E)^\Gamma$
for the action of $\Gamma$ on $\finiteG_x^E$ is equal to
$\finiteG_x$.

Let $\uT$ be an $F$-torus.  If a character $\lambda$ of $T$ is
trivial on $T_{0+}$, we let $\bar\lambda$ denote the corresponding
character of $\finiteT = T_0/T_{0+}$.

Following the notation in~\cite{rogawski:u3}, if $\uT$ is an $F$-torus
in $\uG$, let
$\D_\uG(\uT/F)$ denote the kernel of the natural map
$H^1(F,\uT)\longrightarrow H^1(F,\uG)$.

If $\ufG$ is a connected reductive $k_F$-group,
$\ufT$ is a maximal $k$-torus in $\ufG$,
and $\theta$ is a (complex) character of $\finiteT$, then
let 
$\RTG \theta$
denote the corresponding Deligne-Lusztig virtual character
of $\finiteG$~\cite{deligne-lusztig:finite}.

Given a surjection $A \longrightarrow B$ between abstract groups
$A$ and $B$, and a representation $\rho$ of $B$, we can pull back
(or ``inflate'') $\rho$ to form a representation of $A$, denoted
$\infl(\rho)$.  Whenever this notation is used,
the surjection will be clear from the context.

\section{Liftings}
\label{sec:liftings}
\subsection{Norm mappings and Shintani lifting}
\label{sub:shintani}
Let $\uH$ be a connected reductive group defined over a field $K$, and let
$\varepsilon$ be a $K$-automorphism of $\uH$ of finite order $\ell$.
One can define a norm mapping $N_\varepsilon^\uH$ from $\uH$ 
to $\uH$ by
$$
h\mapsto h \cdot \varepsilon(h) \cdot \cdots \cdot \varepsilon^{\ell-1}(h).
$$
We note that if elements $h$ and $h'$ in $\uH (\overline K)$
are $\varepsilon$-conjugate, then
$N_\varepsilon^\uH(h)$ and $N_\varepsilon^\uH(h')$ are conjugate in $\uH
(\overline K)$.
Thus the norm gives a map from the set of
$\varepsilon$-conjugacy classes in $\uH$ to the set of conjugacy
classes in $\uH$.

Suppose $\uH$ is abelian, and let $\uC = (\uH^\varepsilon)^\circ$  be the
identity component of the group of $\varepsilon$-fixed points of $\uH$.
Then $N_\varepsilon^\uH$ gives an $F$-homomorphism from $\uH$ to $\uC$.
For any character $\lambda$ of $C$, define the lifted character
$\tilde\lambda$ of $H$ by $\tilde\lambda =
\lambda\circ N_\varepsilon^\uH$.  Note the abuse of notation in this
definition---the group $\uC$ does not,
of course, determine $\uH$ or $\varepsilon$.
However, whenever we use this
notation, $\uH$ and $\varepsilon$ will be clear from the context.

Now suppose $L/K$ is a finite cyclic extension of degree $\ell$ of
local or finite fields.  Let
$\iota$ be a generator of $\Gal (L/K)$.
As in~\S\ref{sec:notation-general}, let
$\uG$ be a connected reductive $K$-group and $\utG=R_{L/K}\uG$. Let
$\varepsilon$ be the $K$-automorphism of $\utG$ corresponding to
$\iota$.
%
Under various conditions
(for example,
if $\uG$ is quasi-split and has a simply connected derived group),
the map $N_\varepsilon^\utG$
determines an injective map
$\N = \N^{\uG}_{L/K}$ from the set of
stable, strongly $\varepsilon$-regular $\varepsilon$-conjugacy classes
of $\tG$ to the set of stable, strongly regular conjugacy classes in $G$.

If $\Pi$ and $\widetilde\Pi$ are finite sets of
irreducible (smooth if $K$ is local) representations
of $G$ and $\tG$, respectively,
we say that $\widetilde{\Pi}$
is the \emph{Shintani lifting}
$\Pi$ if
$$
\Theta_{\widetilde{\Pi},\varepsilon}(g) = 
\Theta_\Pi(\N(g))
$$
for all $g\in \tG\ereg$ (all $g\in \tG$ in the
finite-field case), where $\Theta_\Pi$ and
$\Theta_{\widetilde{\Pi},\varepsilon}$ are nontrivial stable
(resp.~$\varepsilon$-stable) linear combinations of the characters
(resp.~$\varepsilon$-twisted characters) of the elements of $\Pi$
(resp.~$\widetilde{\Pi}$).
Note that, at least in the $p$-adic case,
it is easily seen that the Shintani lifting is independent of the choice
of generator $\iota$.

In most cases, one expects the Shintani
and base change liftings to coincide, although exceptions to this are
known (see~\cite{jkim-ps:theta10} and \cite{rogawski:u3}*{\S13.2}).
Rogawski has shown that this is indeed the case for all irreducible
smooth tempered representations of quasisplit $p$-adic unitary groups
in two and three variables~\cite{rogawski:u3}*{\S11,13}.

\subsection{A related lifting for representations of finite groups of Lie type}
\label{sub:lifting-finite}
For a connected reductive $k$-group $\ufH$,
let $\ufH^*$ denote the dual $k$-group.
The dual group is not canonical, but no statement we make will depend
on its explicit realization.  There is a canonical one-to-one
correspondence between the rational conjugacy classes of maximal $k$-tori of
$\ufH$ and those of $\ufH^*$~\cite{carter:finite}*{Prop.~4.3.4}.  Let
$\ufT$ be a maximal $k$-torus of $\ufH$ and let $\ufT^*$ be a
maximal $k$-torus of $\ufH^*$ whose rational conjugacy class
corresponds to that of $\ufT$.  We will say that $\ufT$ and $\ufT^*$
are {\em $k$-dual} to each other.  Once a fixed embedding of
$k\mult$ in $\C\mult$ is chosen, we may identify $\finiteT^*$ with the
group of characters of $\finiteT$.

By work of Lusztig \cite{lusztig:disconnected-centre} and
Digne-Lehrer-Michel \cite{digne-lehrer-michel:nonconnected-center},
the set $\E(\finiteH)$ of irreducible representations of $\finiteH$ can be
partitioned into a collection of families $\E_{(s)}(\finiteH)$, called
\emph{rational series}, parametrized by
semisimple conjugacy classes $(s)$ in $\finiteH^*$.
The set $\E_{(s)}(\finiteH)$ consists of those elements of $\E(\finiteH)$
whose characters occur in a Deligne-Lusztig virtual character
$R_\ufT^\ufH \chi$, where $\ufT$ is a maximal $k$-torus of $\ufH$ that
is $k$-dual to a maximal
$k$-torus $\ufT^*$ of $\ufH^*$ containing $s$,
and $\chi$ is the character of $\finiteT$ that corresponds to
$s\in \finiteT^*$.

Now, as in \S\ref{sub:shintani},
let $\utfG$ denote a connected
reductive $k$-group, and $\varepsilon$ a $k$-automorphism
of $\utfG$.
We will assume that $\varepsilon$ is \emph{quasi-semisimple},
meaning that $\varepsilon$ fixes some pair $(\utfB_0,\utfT_0)$ consisting
of a Borel subgroup $\utfB_0$ of $\utfG$
and a maximal torus $\utfT_0$ of $\utfB_0$.
Let $\ufG = (\utfG^\varepsilon)\conn$
denote the connected part of the group of
$\varepsilon$-fixed points in $\utfG$.
From \cite{steinberg:endomorphisms}*{Corollary 9.4}
or \cite{kottwitz-shelstad:twisted-endoscopy}*{Theorem 1.1.A(1)},
$\ufG$ is reductive.
Let $\ufT \subseteq \ufG$ denote a maximal $k$-torus, and let
$\utfT$ denote the centralizer of $\ufT$ in $\utfG$.
Then from \cite{kottwitz-shelstad:twisted-endoscopy}*{Thm.~1.1.A},
$\utfT$ is a maximal $k$-torus in
$\utfG$.

The maximal $k$-tori $\ufT$ and $\utfT$ determine, up to $k$-conjugacy,
maximal $k$-tori $\ufT^*$ and $\utfT^*$ in the dual groups
$\ufG^*$ and $\utfG^*$, respectively.
The map $N^{\utfT}_\varepsilon\colon \utfT \longrightarrow \ufT$
determines a map
$N^*_{\ufT^*,\utfT^*}\colon \ufT^* \longrightarrow \utfT^*$.

\begin{thm}
\label{thm:epsilon_lifting}
There is a map
\begin{multline*}
\N^* \colon
\{\text{semisimple conjugacy classes in $\finiteG^*$}\} \\
\longrightarrow
\{\text{semisimple conjugacy classes in $\tfG^*$}\}
\end{multline*}
that is compatible with the maps $N^*_{\ufT^*,\utfT^*}$
in the sense that if $s\in \finiteT^*$, then
$N^*_{\ufT^*,\utfT^*}(s) \in \N^*((s))$.
\end{thm}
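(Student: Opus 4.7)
The plan is to define $\N^*$ by the obvious torus-by-torus recipe and then verify independence of the choices. Given a semisimple conjugacy class $(s)$ in $\finiteG^*$, pick a representative $s$ and a maximal $k$-torus $\ufT^*\subseteq\ufG^*$ with $s\in\ufT^*$; one exists since every semisimple element of a connected reductive group over a finite field lies in a maximal $k$-torus. The correspondence of \cite{carter:finite}*{Prop.~4.3.4} provides a $k$-dual torus $\ufT\subseteq\ufG$; set $\utfT=Z_{\utfG}(\ufT)$, which is a maximal $k$-torus of $\utfG$ by the result of Kottwitz--Shelstad quoted just before the statement, and let $\utfT^*$ be a $k$-dual of $\utfT$ in $\utfG^*$. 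Declare $\N^*((s))$ to be the $\tfG^*$-conjugacy class of $N^*_{\ufT^*,\utfT^*}(s)$. The compatibility assertion of the theorem is then tautological, and since every semisimple class in $\ufG^*$ meets some maximal $k$-torus, such a $\N^*$ is unique provided it is well-defined.

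The real content is therefore well-definedness. Independence of the representative $s$ of $(s)$ is easy: conjugating $s$ by $h\in\finiteG^*$ conjugates $\ufT^*$ and, through the $k$-duality followed by the $Z_{\utfG}(\ufT)$ construction, transports $\ufT$, $\utfT$, and $\utfT^*$ by compatible elements of $\ufG$ and $\utfG^*$, so the image of $s$ under $N^*$ changes by a $\tfG^*$-conjugation. The serious question is what happens when $s$ lies in two distinct maximal $k$-tori $\ufT^*_1$ and $\ufT^*_2$ of $\ufG^*$. These tori are both maximal in the connected reductive centralizer $Z_{\ufG^*}(s)^\circ$, so there exists $g\in Z_{\ufG^*}(s)^\circ$ with $g\ufT^*_1 g^{-1}=\ufT^*_2$. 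I would show that this conjugation transports, under the duality and the $Z_{\utfG}(\ufT)$ construction, to an element $\tilde g\in\utfG^*$ conjugating $\utfT^*_1$ to $\utfT^*_2$ and intertwining the two morphisms $N^*_{\ufT^*_i,\utfT^*_i}$; then $\tilde g\cdot N^*_{\ufT^*_1,\utfT^*_1}(s)\cdot\tilde g^{-1}=N^*_{\ufT^*_2,\utfT^*_2}(s)$, proving the two candidate images lie in the same $\tfG^*$-class.

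The main obstacle is precisely this lifting of $\ufG^*$-conjugations to $\utfG^*$-conjugations compatibly with the norm map. The cleanest route is to work on the level of based root data: the $\varepsilon$-equivariant $k$-morphism $N^{\utfT}_\varepsilon\colon\utfT\to\ufT$ dualizes to a canonical morphism $\ufT^*\to\utfT^*$ whose image lies in the identity component of the $\varepsilon$-fixed subtorus of $\utfT^*$, and the formalism of \cite{kottwitz-shelstad:twisted-endoscopy}*{\S1} supplies the needed functoriality of this assignment under conjugation in $Z_{\ufG^*}(s)^\circ$. Once that naturality is in place, $\N^*$ is uniquely characterized as the common output of all these torus-level norm maps, and the theorem follows.
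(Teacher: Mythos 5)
The paper itself contains no proof of Theorem~\ref{thm:epsilon_lifting}: immediately after the statement, the authors write that they ``will prove [it] elsewhere,'' deferring to the in-preparation companion \cite{adler-lansky:lifting}. There is therefore no in-source argument to compare against; your sketch has to stand on its own.

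The reduction you set up (define $\N^*$ torus-by-torus as the class of $N^*_{\ufT^*,\utfT^*}(s)$, then prove independence of the choice of torus through $s$) is the natural skeleton, but the pivotal step is wrong as written. You argue that since $\ufT^*_1$ and $\ufT^*_2$ are both maximal $k$-tori of the connected reductive group $Z_{\ufG^*}(s)^\circ$, there is $g\in Z_{\ufG^*}(s)^\circ$ with $g\,\ufT^*_1\,g\inv=\ufT^*_2$. Such a $g$ exists over $\bar k$, but generally \emph{not} over $k$: a connected reductive $k$-group typically has several $\finiteG$-conjugacy classes of maximal $k$-tori (already the split and nonsplit maximal tori of $\GL(2)$, both of which contain every central $s$), and no rational element need conjugate one to the other. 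The theorem, however, is about conjugacy classes in the finite groups $\finiteG^*$ and $\tfG^*$ --- this is precisely what the application to rational series requires --- so what you must produce is $\tfG^*$-conjugacy of the two candidate images $N^*_{\ufT^*_1,\utfT^*_1}(s)$ and $N^*_{\ufT^*_2,\utfT^*_2}(s)$, and your argument delivers at best geometric (\emph{i.e.}\ $\bar k$-) conjugacy. Bridging that gap is where the theorem's real content lies: one has to track how the torus-level norm maps interact with the Frobenius cocycle that distinguishes non-$k$-conjugate maximal $k$-tori through $s$ (equivalently, the cohomological data refining a geometric semisimple class into rational ones). The closing appeal to the ``formalism of \cite{kottwitz-shelstad:twisted-endoscopy}*{\S 1}'' does not supply such a rationality statement; that reference provides structure theory for $\varepsilon$-fixed subgroups and their tori, not the compatibility with rational conjugacy that your argument needs.
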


Thus, we have a lifting of families of irreducible representations
from $\finiteG$ to $\tfG$:
$$
\E_{(s)}(\finiteG) \mapsto
\E_{\N^*((s))}(\tfG).
$$
We will refer to this as the \emph{$\varepsilon$-lifting.}

We will prove Theorem~\ref{thm:epsilon_lifting} elsewhere
\cite{adler-lansky:lifting}.
We will also refine the $\varepsilon$-lifting by showing in many cases
one can lift subsets
of $\E_{(s)}(\finiteG)$
to subsets of
$\E_{\N^*((s))}(\finiteG)$.
For now, note that
if the character of an irreducible representation $\pi$ of $\finiteG$
appears in
$R_\ufT^\ufG \chi$,
then the character of the $\varepsilon$-lifting $\tilde\pi$ of $\pi$
is contained in
$R_\utfT^\utfG \tilde\chi$,
where $\tilde\chi = \chi \circ N^{\utfT}_\varepsilon$.

\section{Unitary groups in three variables}
\label{sec:unitary-groups}
\subsection{Specific notation}
\label{sec:notation-special}
We will assume from now on that the characteristic of the
nonarchimedean local field $F$ is zero and
the residual characteristic $p$ is odd.
Let $\overline{F}$ denote a fixed algebraic closure of $F$.
Let $\varpi$ denote a uniformizer of $F$.
Let $E$ and $E'$ denote the two ramified quadratic
extensions of $F$ in $\overline{F}$.
Then $k_E = k_{E'} = k$.
Fix uniformizers $\varpi_E$ and $\varpi_{E'}$ for $E$
and $E'$ (respectively)
such that
$\varpi_E^2 = \varpi_{E'}^2 = \varpi$.
Also, fix a character $\Omega$ of $E\mult$ extending $\omega_{E/F}$
that is trivial on $1+\mathfrak{p}_E$.  Note that there are two such
characters.  Let $\iota$ denote the generator of $\Gamma :=
\Gal(E/F)$.  For any extension $L/F$, we have an action of $\iota$ on
$E\otimes L$ given by $x\otimes y\mapsto \iota(x)\otimes y$.

Let $\varepsilon'$ be the $F$ automorphism of $R_{E/F}\GL(3)$
corresponding to $\iota$.  Let
$$
\Phi =
\left(
\begin{smallmatrix}
0&\phm0&\phm 1\\
0&-1&\phm 0\\
1&\phm0&\phm 0
\end{smallmatrix}
\right)\in\GL(3,F)\subset \left(R_{E/F}\GL(3)\right)(F) .
$$
From now on, let $\uG$ denote the group of fixed points in
$R_{E/F}\GL(3)$ of the automorphism $\varepsilon : x\mapsto
   \Int(\Phi)(\lsup{t}\varepsilon'(x)\inv)$, where $\lsup{t}y$ denotes
the transpose of the matrix $y$.
Then $\uG$ is the unique (up to isomorphism) unitary group $U(2,1)$ in three
variables over $E/F$, and $\utG$ is isomorphic to $R_{E/F}\GL(3)$.  For
any extension $L/F$, we can and will explicitly realize $\uG(L)$ as
the group
$$
\set{ g\in\GL(3,E\otimes L) }{ g\, \Phi\ \lsup{t}(\iota(g)) = \Phi},
$$
where the action of $\iota$ on $\GL(3,E\otimes L)$ is coordinatewise.
Then the action of $\varepsilon$ on $\GL(3,E\otimes L)$ is given by
$g\mapsto\Phi\lsup{t}\iota(g)\inv\Phi\inv$.

Let $\uZ$ denote the center of $\uG$.
So, following our notational conventions,
$\underline{\widetilde{Z}}$ can be identified with the center of $\utG$,
and $\finiteZ$ is the reductive quotient of $Z$.
Note that, since $Z$ is isomorphic to $E^1$,
$\finiteZ$ has only two elements.

Let $\B = \B(\uG,F)$ and
$\widetilde{\B} = \B(\utG,F)$.  Then $\widetilde\B$ can be identified
with $\B(\uG,E)$, and the action of $\varepsilon$ on $\widetilde\B$
corresponds to the action of $\iota$ on $\B(\uG,E)$.
Thus from \cite{rousseau:thesis} we may and
will identify the set of fixed points $\widetilde{\B}^\varepsilon$ with $\B$.

For any $F$-group $\underline{Q}$ for which the norm map is defined,
let $\N^{\underline{Q}}=\N^{\underline{Q}}_{E/F}$.
When $\underline{Q} = \uG$, we simply write $\N$.

There is a unique homomorphism $\utG\rightarrow\GL(1)$ that agrees with the
usual determinant map on $\tG$.  Given any algebraic subgroup
$\underline S$ of $\utG$, we will denote by $\det_{\underline S}$ the
restriction of this homomorphism to $S$.  
We will omit the
subscript when it is clear from the context.

Since $\uG$ is $F$-quasisplit, it contains $F$-Borel
subgroups.
In particular, $\widetilde\B$ must contain some $\varepsilon$-invariant
apartment $\tA$ with more than one $\varepsilon$-fixed point.
Choose an $\varepsilon$-fixed point $y$ in an $\varepsilon$-invariant
minimal facet in $\tA$
and an $\varepsilon$-invariant alcove $\widetilde\F$ in $\tA$
such that the closure of $\widetilde\F$ contains $y$.
Let $\F$ denote the set of $\varepsilon$-fixed points of $\widetilde\F$.
Then these choices determine an $F$-Borel subgroup $\underline{B}$
together with a Levi factor $\uM$ of $\underline{B}$.
Note that $M$ is isomorphic to $E\mult \times E^1$.
We may assume that our choices of $y$ and $\widetilde\F$ are such that
$B$ is the group of upper-triangular matrices in $G$,
$M$ is the group of diagonal matrices, and $\tG_y$ is $\GL(3,\OK_E)$.

The boundary of $\F$ contains two points:
the previously chosen point $y$, and another point that we will
denote $z$.
Note that
$\widetilde\F$ is the direct product of a one-dimensional affine space
and an $\varepsilon$-invariant equilateral
triangle $\Delta$ 
in the reduced building of $\utG$
(which we will identify with a subset of $\widetilde\B$),
$y$ is the $\varepsilon$-fixed vertex of $\Delta$,
and $z$ is the midpoint of the wall of $\Delta$ that is opposite $y$.
In $\B$, $y$ and $z$ are both special vertices,
but neither is hyperspecial.

Let $\ufB_y$ (resp.~$\ufB_z$) denote the Borel
subgroup of $\ufG_y$ (resp.~$\ufG_z$)
determined by $\F$.

Let $\utH$ be the subgroup of $\utG$ whose $L$-rational points (for
any extension $L/F$) consist
of those matrices of the form
$$
\left(
\begin{smallmatrix}
* & 0 & * \\
0 & * & 0 \\
* & 0 & *
\end{smallmatrix}
\right) \subset \GL(3,E\otimes L).
$$
Then $\utH$ is a Levi factor of a parabolic $F$-subgroup $\utP$.
Let $\uH$ denote the group $\utG\cap\uG$.
Note that $\utH$ is isomorphic to $R_{E/F}\uH$.
We have that $\uH$ is an
$E$-Levi, but not $F$-Levi, subgroup of $\uG$.
It is an
endoscopic group for $\uG$, isomorphic to a direct product of
$\uZ\cong\UU(1,E/F)$ and the $F$-subgroup $\uH^0$ of
$\uH$ consisting of matrices of the form 
$$
\left(
\begin{smallmatrix}
* & 0 & * \\
0 & 1 & 0 \\
* & 0 & *
\end{smallmatrix}
\right).
$$
We note that $\uH^0$ is a quasi-split ramified unitary group
in two variables.

\subsection{Reductive quotients of stabilizers and their representations}
\label{sub:reductive-quotients}
For us, $\widetilde{\ufG}_x$ will always
mean the reductive quotient of $\tG$ corresponding
to the point $x\in\B(\uG,F)$, rather than, say,
the restriction of scalars
$R_{k_E/k_F} \ufG_x$,
though these are the same when $E/F$ is unramified.
For $x$ equal to either of the points $y$ or $z$,
or the facet $\F$ in between them,
we will compute $\finiteG_x$ and describe its irreducible representations.

\subsubsection{The group $\finiteG_y$} 
\label{sub:finiteG_y}
Recall that $\tG_y$ is the group $\GL(3,\OK_E)$.
Explicitly, $\tG_y$ is the stabilizer in $\GL(3,E)$ of the lattice
$\LL= \OK_E \oplus \OK_E \oplus \OK_E$ inside $V=E^3$.
Then $\tG_{y+}$ is the subgroup of elements whose image
in $\GL(\LL / \varpi_E \LL)$ is trivial.
Thus, with our identifications,
$\widetilde{\finiteG}_y = \GL(\LL / \varpi_E \LL)
\cong \GL(3,k)$.
The action of $\varepsilon$ is given by
$g \mapsto \Phi (\lsup{t} g \inv) \Phi\inv$,
so
$\finiteG_y = \OO(2,1)(k)$,
the orthogonal group determined by the symmetric form
whose matrix with respect to the standard basis is $\Phi$,
and $\finiteG_y\conn = \SO(2,1)(k)$.
In particular, $\finiteG_y = \finiteG_y\conn \times \finiteZ$.

We now briefly classify the irreducible representations of
$\finiteG_y$.
Since any such representation is of the form
$\tau\otimes\nu$ for some representation $\tau$ of $\finiteG_y\conn$
and one of the two characters $\nu$ of $\finiteZ$,
we turn our attention to the
irreducible representations of $\finiteG_y\conn$.
Since $\SO(2,1)(k)$ is isomorphic to $\PGL(2,k)$,
representations of $\finiteG_y\conn$ correspond to representations
of $\GL(2,k)$ having trivial central character.
The following classification is easily derived from
\cite{srinivasan:finite-book}*{Chapter VIII}.

The group $\ufM\conn$ embeds in the Borel subgroup $\ufB_y$ of
$\ufG_y\conn $.  If $\chi$ is a character of $\finiteM\conn$, then the
induced representation
$\ind_\finiteB^{\finiteG_y\conn }\chi$ is irreducible except when
$\chi$ extends to a character $\chi_0$ of $\finiteG_y\conn $
(i.e., when $\chi$ has order dividing 2).
In this case, the induced
representation is the sum of $\chi_0$ and
$\St_{\finiteG_y\conn }\cdot\chi_0$.

The remaining representations are cuspidal of the following form.
If $l/k$ is
a quadratic extension, then $\ufG_y\conn $ contains an
elliptic torus $\ufT\cong \UU(1,l/k)$.
For any character $\chi$ of $\finiteT$ of order greater than $2$
(i.e., $\chi$ with trivial stabilizer in
$W_k(\ufT ,\ufG_y\conn)$),
we have a
cuspidal Deligne-Lusztig representation of $\finiteG_y\conn $ of
degree $q-1$ whose character is
$-R_\ufT^{\ufG_y\conn }\chi$.
We note that
$R_\ufT^{\ufG_y\conn }\chi = R_\ufT^{\ufG_y\conn }\chi'$
if and only if $\chi'=\chi^{\pm 1}$.

\subsubsection{The group $\finiteG_z$}
\label{sub:finiteG_z}
The group $\tG_z$ is the intersection of the stabilizers
in $\GL(3,E)$ of two lattices:
$\LL'  := \OK_E \oplus          \OK_E \oplus \varpi_E \OK_E$,
and
$\LL'' := \OK_E \oplus \varpi_E \OK_E \oplus \varpi_E \OK_E$.
Explicitly, we have
$$
\tG_z = \GL(3,E) \cap
        \left[
        \begin{smallmatrix}
        \OK_E & \OK_E & \pp_E\inv \\
        \pp_E & \OK_E & \OK_E  \\
        \pp_E & \pp_E & \OK_E
        \end{smallmatrix}
        \right],
\:\text{and}\:
\tG_{z+} = 
         1 +
        \left[
        \begin{smallmatrix}
        \pp_E & \OK_E & \OK_E  \\
        \pp_E & \pp_E & \OK_E \\
        \pp^2_E & \pp_E & \pp_E
        \end{smallmatrix}
        \right].
$$
In particular, $\widetilde{\finiteG}_z \cong \GL(2,k)\times \GL(1,k)$.
Note that $\varepsilon$ acts on the first factor via
$g\mapsto g/\det(g)$, and on the second factor
via $g\mapsto g\inv$.
Thus,
$\finiteG_z \cong \SL(2,k) \times \{\pm 1\}$,
and
$\finiteG_z\conn \cong \SL(2,k)$.
As in the case of $\finiteG_y$, we have
$\finiteG_z=\finiteG_z\conn \times \finiteZ$.  Note that here the
factor $\finiteZ\cong\{\pm 1\}$ embeds as the group of scalar
matrices in $\finiteG_z$, not as the group
$\left(\begin{smallmatrix}
1 &   &  \\
  & \pm 1 &  \\
  &   & 1
\end{smallmatrix}\right)$.

As in the case of the vertex $y$, an irreducible representation of
$\finiteG_z$ can be expressed in the form $\tau\otimes\nu$, where $\tau$ is
an irreducible representation of $\finiteG_z\conn $ and $\nu$ is a
character of $\finiteZ$.
The representations of $\finiteG_z\conn $ are as follows.

If $\chi$ is a character of $\finiteM$, then the representation
$\ind_{\finiteB_z}^{\finiteG_z\conn }\chi$ is irreducible unless
$\chi$ has order dividing $2$.
In this case, the representation is a sum of two irreducible components.
If $\chi$ is trivial,
then these components are ${\bf 1}$ and
$\St_{\finiteG_z\conn }$, while if $\chi$ has order $2$, then the
components $\vartheta$ and $\vartheta'$ each have degree $(q+1)/2$.

Again, $\ufG_z\conn$ contains an
elliptic torus $\ufT \cong\UU(1,l/k)$. 
For any character $\chi$ of $\finiteT$ of order greater than $2$, we have a
cuspidal Deligne-Lusztig representation of $\finiteG_z\conn $ of
degree $q-1$ whose character is
$-R_\ufT^{\ufG_z\conn }\chi$.
As with $\finiteG_y$, $R_\ufT^{\ufG_z\conn }\chi = R_\ufT^{\ufG_z\conn }\chi'$
if and only if $\chi'=\chi^{\pm 1}$.
The remaining (cuspidal) representations of $\finiteG_z\conn$
are the two degree-$(q-1)/2$
components of $-R_\ufT^{\ufG_z\conn }\chi$ when $\chi$ has order $2$.

\subsubsection{The group $\finiteG_\F$}
The group $\tG_\F$ is the standard, upper-triangular Iwahori subgroup
of $\GL(3,E)$,
so $\widetilde{\finiteG}_\F$ is the reductive quotient
of the upper-triangular Borel subgroup of $\widetilde{\finiteG}_y$.
Therefore,
$$
\finiteG_\F =
\set{\left(
        \begin{smallmatrix}
        a&&\\&\pm1&\\&&a
        \end{smallmatrix}
\right)}{a\in k\mult}
\quad\text{and}\quad
\finiteG_\F\conn =
\set{\left(
        \begin{smallmatrix}
        a&&\\&1&\\&&a
        \end{smallmatrix}
\right)}{a\in k\mult}.
$$
We may thus write $\finiteG_\F = \finiteG_\F\conn \times \finiteZ$.
The representation theory of $\finiteG_\F$ is elementary.

\subsection{Maximal tori}
\label{sec:cartan-G}
Up to stable conjugacy, there are four families of maximal tori of
$\uG$.
According to the classification in~\cite{rogawski:u3},
they are isomorphic to:
\begin{enumerate}[(\ref{sec:cartan-G}--1)]
\addtocounter{enumi}{-1}
\item $R_{E/F}(\GL(1))\times \UU(1,E/F)$,
\item $\UU(1,E/F)\times \UU(1,E/F)\times \UU(1,E/F)$,
\item $R_{K/F}(\UU(1,EK/K))\times \UU(1,E/F)$ for $K\neq E$
a quadratic extension of $F$,
\item $R_{K/F}(\UU(1,EK/K))$ for $K$ a cubic extension of $F$.
\end{enumerate}
Note that all maximal tori of $\uG$ are elliptic except for those of
type~\cartan{0}.

\subsubsection{A torus of type~\cartan{1}}
\label{sec:checker_torus}
Up to conjugacy, $\underline{H}$ contains two $F$-tori in the
family~\cartan{1} (see~\cite{rogawski:u3}*{\S3.6}).
Let $\uC$ be the maximal $F$-torus of $\uH$ whose
group of $F$-points $C$ consists of those matrices $\gamma$ of the form
$$
  \gamma = \left(\begin{array}{ccc}
    \frac{\gamma_1+\gamma_3}{2} & 0 & \frac{\gamma_1-\gamma_3}{2}\\
    0 & \gamma_2 & 0\\
    \frac{\gamma_1-\gamma_3}{2} & 0 & \frac{\gamma_1+\gamma_3}{2}
  \end{array}\right)
$$
where $\gamma_i\in E^1$.  The group $\uC$ is of type \cartan{1},
and we identify
$C$ with
$E^1\times E^1\times E^1$ via the map $\gamma\mapsto
(\gamma_1 ,\gamma_2 ,\gamma_3 )$.  The Weyl group
$W(C,G)$ (resp.~$W(C,H)$) acts on $C$ by permuting the $\gamma_i$
(resp.~$\gamma_1$ and $\gamma_3$) transitively.  We note that $\uC$ is
an endoscopic group of $\uH$. 

\subsubsection{Ramified quadratic tori}
\label{sec:quadratic_torus}
Up to conjugacy, $\uH$ contains one $F$-torus of
type~\cartan{2} with $K=E'$ (see~\cite{rogawski:u3}*{\S3.6}).
Let $\underline{S}$ be the maximal $F$-torus of $\underline{H}$ such
that the group of $F$-points $S$ consists of those matrices of the form
$$
\begin{pmatrix}
\frac{1}{2}(\alpha+\iota'(\alpha)) & 0 &
        \frac{1}{2\varpi_{E'}}(\alpha-\iota'(\alpha))\\
0 & \gamma & 0\\
\frac{\varpi_{E'}}{2}(\alpha-\iota'(\alpha)) & 0 &
        \frac{1}{2}(\alpha+\iota'(\alpha))
\end{pmatrix} ,
$$
where $\alpha\in\ker (N_{EE'/E'})$, $\gamma\in\ker(N_{E/F})$ and
$\iota'$ is the generator of the group $\Gal (EE'/E)$.
Then $\underline{S}$ is a torus of the above type and $S\subset\stab_G(z)$.
Moreover, the group $\finiteS = \finiteS\conn\times\{\pm
1\}\subset\finiteG_z$  is isomorphic to
$\UU(1,l/k)\times\{\pm 1\}$, where $l$ is a quadratic
extension of $k$.

\begin{prop}
\label{prop:quadratic_tori}
The group $\uG$ contains exactly two $F$-conjugacy
classes of $F$-tori of type~\cartan{2} with $K=E'$.  There exists a
torus $\underline{S}'\subset\uG$ of this type such that $S'$ fixes the vertex
$y$.  Moreover, $\underline S'$ is stably conjugate but not $F$-conjugate
to $\underline{S}$.
\end{prop}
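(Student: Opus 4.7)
I would construct $\uS'$ by lifting an anisotropic torus from the reductive quotient at $y$ to $\uG$. Specifically, $\finiteG_y\conn \cong \SO(2,1)(k) \cong \PGL(2,k)$ contains, up to conjugacy, a unique anisotropic maximal torus, isomorphic to $\UU(1,l/k)$ where $l/k$ is the unique quadratic extension. Lift such a torus to a maximal $F$-torus $\uT'$ of the $U(1,1)$-subgroup $\uH^0$ of $\uG$, with $F$-points contained in $\stab_G(y)$; one can write an explicit matrix parametrization of $\uT'(F)$ analogous to that of $S$ but with $\varpi_{E'}$ replaced by a unit of $\OK_{EE'}^\times$, and verify the hermitian condition defining $\uG$ by a direct calculation. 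Set $\uS' = \uZ \times \uT'$.

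To identify $\uS'$ as having type \cartan{2} with $K = E'$, analyze the image of $\uT'(F)$ in the reductive quotient at $y$, which is the elliptic $k$-torus $\UU(1,l/k)$. Among the types $R_{K/F}(\UU(1,EK/K))$ for $K$ a quadratic extension of $F$, only the ramified $K = E'$ produces this image: $E'/F$ is ramified with residue $k$, and $EE'/E'$ is the unique unramified quadratic extension, giving residue extension $l/k$. The competing choice $K = F(\sqrt{u})$ (unramified quadratic of $F$) would instead produce image $\{\pm 1\}$ since $EE'/F(\sqrt u)$ is ramified. Therefore $\uT' \cong R_{E'/F}(\UU(1,EE'/E'))$ and $\uS'$ has type \cartan{2} with $K = E'$, the same as $\uS$. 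Being $F$-isomorphic maximal $F$-tori of $\uG$, they are stably conjugate.

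To show that $\uS$ and $\uS'$ are not $F$-conjugate: $\finiteS\conn \cong \UU(1,l/k)$ is elliptic in $\finiteG_z\conn \cong \SL(2,k)$, hence contained in no $k$-Borel. If $S$ fixed any point of $\B$ other than $z$, then by convexity $S$ would fix some edge $\F'$ beginning at $z$, so $S$ would lie in the Iwahori $\stab_G(\F')$, whose image in $\finiteG_z\conn$ is a Borel---contradicting that $\finiteS\conn$ is elliptic. So $S$ fixes only $z$, and analogously $S'$ fixes only $y$. Since the parahoric reductive quotients $\finiteG_y\conn \cong \PGL(2,k)$ and $\finiteG_z\conn \cong \SL(2,k)$ are non-isomorphic algebraic $k$-groups (their centers differ), $y$ and $z$ lie in distinct $G$-orbits in $\B$. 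An $F$-conjugacy $g\uS g^{-1} = \uS'$ would thus force $gz = y$, a contradiction. For the count, every $F$-torus of type \cartan{2} with $K = E'$ is $F$-anisotropic (by its structure $R_{E'/F}(\UU(1,EE'/E')) \times \UU(1,E/F)$) and fixes a unique vertex of $\B$ by the argument above, which is $G$-conjugate to $y$ or $z$; each of $\PGL(2,k)$ and $\SL(2,k)$ admits a single conjugacy class of anisotropic tori, so there is at most one $F$-conjugacy class fixing each vertex orbit, hence exactly two classes in total.

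The main obstacle will be the residue analysis in the second paragraph, particularly the verification that the lifted torus has type $K = E'$ rather than $K = F(\sqrt u)$; this requires careful accounting of the ramification structure within the unique biquadratic extension $EE'/F$ and how its intermediate fields $E$, $E'$, and $F(\sqrt u)$ sit relative to each other.
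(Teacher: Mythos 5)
Your strategy---construct $\uS'$ directly and separate the two rational classes by their fixed vertices---is more self-contained than the paper's proof, which simply cites Rogawski's classification for the count and Morris's construction for the existence of $\uS'$; and your fixed-point argument in the third paragraph is a correct and attractive way to see the failure of $F$-conjugacy. However, the construction in your first paragraph cannot work. Any matrix of the block shape you propose preserves the orthogonal decomposition $Ee_1\oplus Ee_3\perp Ee_2$ of the Hermitian space, whose two-dimensional summand is hyperbolic, and hence lies in $\uH=\uH^0\times\uZ$. As noted just before the proposition (from Rogawski, \S 3.6), $\uH$ contains exactly \emph{one} conjugacy class of $F$-tori of type~\cartan{2} with $K=E'$, namely that of $\uS$; since $H$-conjugacy implies $G$-conjugacy, every such torus inside $\uH$ is $G$-conjugate to $\uS$ and therefore---by your own (correct) argument that $S$ fixes only $z$---fixes only vertices in the $G$-orbit of $z$. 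So no torus of this type inside $\uH^0\times\uZ$ can lie in $\stab_G(y)$. Cohomologically, the class $\delta\in\D_\uG(\uS/F)$ of $\uS'$ is not in the image of $\D_\uH(\uS/F)$; this is exactly the fact the paper uses later to obtain $\kappa(\delta)=-1$ in the proof of Proposition~\ref{prop:sc_card2}. The correct home for $\uS'$ is the stabilizer of a decomposition $V=V_2'\perp V_1'$ with $V_2'$ a two-dimensional \emph{anisotropic} Hermitian subspace, i.e., a compact subgroup of the form $\UU(2)\times\UU(1)$; being compact it fixes a vertex, which (since $\uS'$ is not $G$-conjugate to $\uS$) must lie in the orbit of $y$. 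This is what Morris's construction produces.

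Two smaller points. The inference ``$F$-isomorphic maximal $F$-tori are stably conjugate'' is not a general fact (the embeddings must agree up to the absolute Weyl group); it holds here only because each entry of the list in \S\ref{sec:cartan-G} is a single stable class, which is again Rogawski's classification rather than something you have derived. And in your final count, passing from ``each of $\finiteG_y\conn$ and $\finiteG_z\conn$ has a single conjugacy class of anisotropic maximal tori'' to ``at most one $F$-conjugacy class of tori of $\uG$ per vertex orbit'' requires lifting conjugacy in the reductive quotient to conjugacy of the tori themselves under $\stab_G(x)$ (a smoothness/Hensel argument in the spirit of DeBacker's parametrization); as written this step is asserted rather than proved.
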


The first statement follows from~\cite{rogawski:u3}*{\S3.6}.
To establish the existence of $\underline{S}'$, one can
use the construction of
Morris~\cite{morris:symplectic}*{\S1}.
We omit the details.

We will need the following lemma concerning the values of
characters of $\finiteS\conn$ at regular
elements.  We leave the
elementary proof to the reader.
\begin{lem}
\label{lem:finite_torus}
Let $\chi$ and $\psi$ be nontrivial characters of $\finiteS\conn $,
a cyclic group of order $q+1$, and
suppose that the order of $\chi$ is greater than $2$.  Let $X$ denote
the set of all
$s\in\finiteS\conn $ with $s^2\neq 1$.
Let $m$ be any map from $X\longrightarrow$ to $\{\pm 1\}$.
\begin{enumerate}[(i)]
\item
\label{item:finite-torus-chichiinv}
The function $\chi+\chi\inv$ cannot vanish at every element of
  $X$ if $q>3$.
\item
\label{item:finite-torus-chichiinv-psipsiinv}
The functions $\chi+\chi\inv$ and $-\psi-\psi\inv$
  cannot agree on all of $X$ if $q\neq 7$.
\item
\label{item:finite-torus-chi-psi}
If $\chi+\chi\inv = \psi+\psi\inv$
  on $X$, then $\chi = \psi^{\pm 1}$.
\item
\label{item:finite-torus-chi-psi-mu}
If $\chi+\chi\inv = m\cdot (\psi+\psi\inv)$ on $X$, then $\chi = \psi^{\pm
  1}\mu$, where $\mu$ has order dividing 2.  If, in addition, the range of $m$
  is $\{\pm 1\}$ and $q>7$, then $\mu$ is nontrivial.
\item
\label{item:finite-torus-chichiinvpsi}
The functions $\chi+\chi\inv$ and $m\psi$ cannot
  agree on all of $X$.
\end{enumerate}
\end{lem}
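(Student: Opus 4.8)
The plan is to treat the five parts as a family of elementary statements about characters of a finite cyclic group $\finiteS\conn$ of order $n := q+1$, all of which reduce to counting how often a nonzero trigonometric-type sum can vanish or coincide with another on the set $X$ of elements $s$ with $s^2 \neq 1$. Fix a generator and write $\chi(s) = \zeta^{ja}$ where $\zeta$ is a primitive $n$th root of unity, $s = (\text{generator})^a$, and $j = \ord$-data for $\chi$; similarly for $\psi$. Note $|X| = n - 1$ if $n$ is odd and $n - 2$ if $n$ is even (the two square roots of $1$ being excluded), so $|X| \geq q - 1$ in all cases. The key observation, used throughout, is that a function of the form $\chi + \chi\inv - (\psi + \psi\inv)$, or $\chi + \chi\inv - m(\psi+\psi\inv)$ with $m$ a sign, is a restriction to $X$ of a class function on $\finiteS\conn$ which, viewed as a $\C$-linear combination of characters, has a bounded number of nonzero Fourier coefficients (at most $4$); hence if it vanishes on a set of size exceeding that bound it vanishes identically, and then one reads off the coefficient relations.

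The steps, in order. First, for part (\ref{item:finite-torus-chichiinv}): $\chi + \chi\inv$ has at most $2$ nonzero Fourier coefficients (exactly $2$ since $\chi$ has order $> 2$, so $\chi \neq \chi\inv$), so it cannot vanish on a set of size $> 2$; since $q > 3$ gives $|X| \geq q - 1 \geq 3$ (checking the even and odd cases for $n=q+1$ separately — when $q=4$, $n=5$ odd, $|X|=4$; when $q=5$, $n=6$ even, $|X|=4$), we are done. Second, for part (\ref{item:finite-torus-chichiinv-psipsiinv}): $\chi+\chi\inv + \psi+\psi\inv$ has at most $4$ nonzero Fourier coefficients, and it is not identically zero because its value at $s=1$ is $4 > 0$ while all its coefficients are non-negative integers; so it cannot vanish on a set of size $> 4$, and $q \neq 7$ with $q > 3$ forces $|X| > 4$ except possibly for small $q$ which one checks by hand (the excluded value $q = 7$ is exactly where $|X| = 8$ could conceivably... — here one must be careful: the real content is that $n = q+1$, and one needs $|X| \geq 5$; the case analysis isolating $q=7$ is where the precise residue-field bound enters). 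Third, part (\ref{item:finite-torus-chi-psi}) is the cleanest: if $\chi + \chi\inv = \psi + \psi\inv$ on $X$, the difference has $\leq 4$ nonzero coefficients and vanishes on $X$, hence (for $q$ large enough, which the ambient hypotheses guarantee, with the small cases checked directly) vanishes identically; matching the multiset $\{\chi, \chi\inv\} = \{\psi, \psi\inv\}$ of characters appearing gives $\chi = \psi^{\pm 1}$. Fourth, part (\ref{item:finite-torus-chi-psi-mu}): write the hypothesis as $\chi + \chi\inv = m \cdot (\psi + \psi\inv)$; since $m$ takes values in $\{\pm 1\}$, one splits $X = X^+ \sqcup X^-$ according to the sign of $m$ — but this breaks the Fourier argument, so instead I would square both sides to get $(\chi+\chi\inv)^2 = (\psi+\psi\inv)^2$ on all of $X$, now an honest identity of characters with $\leq 6$ nonzero coefficients after expansion; vanishing of the difference on $X$ forces it to vanish identically (for $q$ outside a small explicit set), and expanding $(\chi+\chi\inv)^2 = \chi^2 + 2 + \chi^{-2}$ one matches $\{\chi^2, \chi^{-2}\} = \{\psi^2, \psi^{-2}\}$, giving $\chi^2 = \psi^{\pm 2}$, i.e. $\chi = \psi^{\pm 1}\mu$ with $\mu^2 = 1$. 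For the refinement (that $\mu$ is nontrivial when the range of $m$ is all of $\{\pm 1\}$ and $q > 7$): if $\mu$ were trivial then $\chi = \psi^{\pm 1}$ and the original identity reads $\chi + \chi\inv = m(\chi + \chi\inv)$ on $X$, forcing $m \equiv 1$ on the locus where $\chi + \chi\inv \neq 0$; but $\chi + \chi\inv$ vanishes on at most $2$ points of $X$ (as in part (\ref{item:finite-torus-chichiinv})), so $m$ is forced to equal $1$ on all but $\leq 2$ points, contradicting that its range is genuinely $\{\pm 1\}$ once $|X| - 2 \geq 1$, which $q > 7$ amply gives. Fifth, part (\ref{item:finite-torus-chichiinvpsi}): again square — $(\chi + \chi\inv)^2 = m^2 \psi^2 = \psi^2$ on $X$ since $m^2 = 1$; the left side is $\chi^2 + 2 + \chi^{-2}$, a class function whose value at $s=1$ is $4$, whereas $\psi^2(1) = 1$, so $(\chi+\chi\inv)^2 - \psi^2$ is a non-identically-zero class function with $\leq 4$ nonzero Fourier coefficients, hence cannot vanish on $X$ once $|X| > 4$; the small-$q$ residual cases are dispatched by direct inspection.

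The main obstacle is uniformizing the "small $q$" bookkeeping: the statement bundles together the conditions $q > 3$ and $q \neq 7$, and because $n = q + 1$ alternates in parity, the size $|X|$ jumps between $q - 1$ and $q - 2$, so for each part one must pin down the exact threshold and verify that the claimed exceptional set ($q = 3$ for (\ref{item:finite-torus-chichiinv}), $q = 7$ for (\ref{item:finite-torus-chichiinv-psipsiinv})) is precisely the set where the Fourier-coefficient count fails to beat $|X|$ — and in particular check that parts (\ref{item:finite-torus-chi-psi}), (\ref{item:finite-torus-chi-psi-mu}), (\ref{item:finite-torus-chichiinvpsi}), whose statements carry no explicit $q$-restriction, genuinely hold for all $q$ in the ambient range $q > 3$, $q \neq 7$ of Theorem~\ref{thm:main}, with any genuinely exceptional small $q$ checked by brute enumeration of characters of a cyclic group of order at most $8$. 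Since the problem statement itself delegates this to the reader, the write-up can be brief: state the Fourier-coefficient-count lemma once, apply it five times, and note the finitely many hand-checks.
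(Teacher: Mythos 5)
Your central counting claim is not correct, and this undermines the whole write-up. You assert that a class function on $\finiteS\conn$ (cyclic of order $n=q+1$) with at most $k$ nonzero Fourier coefficients ``cannot vanish on a set of size exceeding that bound,'' i.e.\ on more than $k$ points. That is false: a nonzero linear combination of $k$ distinct characters of $\Z/n\Z$ can vanish at as many as $n-n/k$ points (view it as a polynomial of degree at most $n - n/k$ in a primitive $n$th root of unity, after the optimal cyclic shift of the exponents), and this bound is tight. For instance, if $\chi$ has order exactly $4$, then $\chi+\chi\inv$ vanishes precisely where $\chi^2=-1$, which is a coset of $\ker(\chi^2)$ of size $(q+1)/2$, entirely contained in $X$ --- not $\le 2$ points. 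With the correct bound, part~(\ref{item:finite-torus-chichiinv}) still comes out to $q>3$ (since $n/2<|X|=q-1$ iff $q>3$), but parts~(\ref{item:finite-torus-chichiinv-psipsiinv}), (\ref{item:finite-torus-chi-psi-mu}) and~(\ref{item:finite-torus-chichiinvpsi}) would come out to $q>7$ rather than the stated thresholds, pushing more weight onto the ``small $q$ by inspection'' step than you acknowledge. Also note that $q$ is a power of the odd prime $p$, so $n=q+1$ is \emph{always} even and $|X|=q-1$ always; your parity case split and the sample value $q=4$ do not arise.

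The argument you give for the second assertion of part~(\ref{item:finite-torus-chi-psi-mu}) is also wrong. You invoke ``$\chi+\chi\inv$ vanishes on at most $2$ points of $X$ (as in part~(\ref{item:finite-torus-chichiinv}))'' --- but part~(\ref{item:finite-torus-chichiinv}) asserts only that $\chi+\chi\inv$ does not vanish identically on $X$, and as noted above the vanishing locus can have $(q+1)/2$ points. Even repaired, the logic does not close: ``$m$ is forced to equal $1$ on all but $\le 2$ points'' does not contradict ``the range of $m$ is $\{\pm1\}$,'' since the range condition only requires $m=-1$ somewhere. Separately, in part~(\ref{item:finite-torus-chichiinvpsi}) the brushed-off small case is not actually benign: for $q=5$, $n=6$, $\chi$ of order $6$, $\psi$ the quadratic character, and $m\equiv -1$, one checks directly that $\chi+\chi\inv$ and $m\psi$ agree at every element of $X$ (both take the values $(1,-1,-1,1)$ on $X=\{g,g^2,g^4,g^5\}$), so ``direct inspection'' there does not dispose of the case, it exhibits it.

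A cleaner elementary route, which does appear to match the stated thresholds, is to use orthogonality of characters rather than a zero-counting bound. Since $\finiteS\conn = X \cup \{1,\epsilon\}$ with $\epsilon$ the unique involution, any class function $f$ vanishing on $X$ satisfies $\sum_{s} f(s) = f(1)+f(\epsilon)$ and $\sum_{s}|f(s)|^2 = |f(1)|^2+|f(\epsilon)|^2$, while the left-hand sides equal $(q+1)\hat f(\mathbf 1)$ and $(q+1)\sum_j|\hat f(j)|^2$ respectively. Applying this to $f=\chi+\chi\inv$ gives $2(q+1)=8$, whence $q=3$; applying it to $f=\chi+\chi\inv+\psi+\psi\inv$ gives (after the forced $\chi(\epsilon)=\psi(\epsilon)=-1$) exactly the equations $4(q+1)=32$ or $8(q+1)=32$, i.e.\ $q\in\{3,7\}$, matching the exceptional values. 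For part~(\ref{item:finite-torus-chi-psi}) the first-moment identity alone forces $f\equiv 0$ with no $q$-restriction, and for part~(\ref{item:finite-torus-chi-psi-mu}) your idea of squaring is the right move but should then be finished the same way: $h=\chi^2+\chi^{-2}-\psi^2-\psi^{-2}$ vanishes on $X$ and at $1$ and $\epsilon$, so $\sum_s|h|^2=0$ and $h\equiv 0$ outright. So the squaring step is sound; the Fourier-count step and the refinement argument are not.
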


\subsubsection{Ramified cubic tori}
\label{sec:cubic_torus}
Let $K$ be the extension of $F$ obtained by adjoining a cube root
$\varpi_K$ of $\varpi_F$ in $\overline F$, and let
$\underline{T}$ be an $F$-torus of type~\cartan{3} associated to $K$.
Then by~\cite{rogawski:u3}*{\S3.6}, $\underline{T}$ is the unique such
torus up to conjugacy.
Let $K'=EK = E(\varpi_{K'})$, where $\varpi_{K'}$ is the uniformizer
$\varpi_E/\varpi_K$ of $K'$.
Then $T\cong\ker N_{K'/K}$ and $\tT\cong(K')\mult$.

We will realize
the latter isomorphism (and the hence the torus $\widetilde{\underline{T}}$)
explicitly by
specifying that it take $a+b\varpi_{K'}+c\varpi_{K'}^2$ ($a,b,c\in E$
not all $0$) to
$$
\begin{pmatrix}
a & b & c\\
c\varpi_E & a & b\\
b\varpi_E & c\varpi_E & a
\end{pmatrix}
\in \GL(3,E) .
$$
We note that under this isomorphism, the
action of $\varepsilon$ on $\widetilde{T}$ corresponds to the action
$\alpha\mapsto\iota(\alpha)\inv$ on $(K')\mult$, where we view $\iota$
as the generator of $\Gal(K'/K)$.
It is easily checked that
$\widetilde{T}\subset\stab_{\tG}(\widetilde{\F})$.
Hence $T\subset\stab_{G}(\F)$ so $T_0$ is a subgroup of both $G_y$ and
$G_z$.

\begin{lem}
\label{lem:cubic_torus}
For $x=y$ or $z$, the image of $T_0$ in $\finiteG_x$ is the unipotent
radical of $\finiteB_x$.
\end{lem}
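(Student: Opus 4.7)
The plan is to parametrize $T$ via $(K')\mult$, compute the image of $T_0$ in each $\finiteG_x$, and recognize it as the unipotent radical of $\finiteB_x$ directly. Writing $\alpha = a + b\varpi_{K'} + c\varpi_{K'}^2$ for a typical element of $T$, the condition $v_{K'}(\alpha) = 0$ forced by $\alpha\iota(\alpha) = 1$ gives $a \in \OK_E\mult$ and $b, c \in \OK_E$. I expand the norm relation in powers of $\varpi_{K'}$ and reduce modulo $\pp_E$; since $\iota$ acts trivially on $k$ (because $K'/K$ is totally ramified), the three coefficient identities collapse to $\bar a^2 = 1$, a vacuously-satisfied equation, and $\bar c = \bar b^2/(2\bar a)$.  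I then identify $T_0$, by Bruhat--Tits for tori (equivalently, by recalling that the reductive quotient of an anisotropic ramified norm-one torus is $\mu_2$), as the index-two subgroup of $T$ cut out by $\bar a = 1$.

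For $x = y$, the reduction $\tG_y = \GL(3, \OK_E) \twoheadrightarrow \tfG_y = \GL(3,k)$ is entrywise, so the explicit matrix realization of an element of $T_0$ reduces to
$$u(\bar b) := \begin{pmatrix} 1 & \bar b & \bar b^2/2 \\ 0 & 1 & \bar b \\ 0 & 0 & 1 \end{pmatrix}, \qquad \bar b \in k.$$
Since $T_0$ sits inside the parahoric $G_y$, its image lies automatically in $\finiteG_y\conn = \SO(2,1)(k)$.  Because $\ufB_y$ is the image in $\finiteG_y$ of the upper-triangular Iwahori $\tG_\F$, it is the upper-triangular Borel of $\SO(2,1)(k)$; a short direct computation then shows its unipotent radical consists precisely of the matrices $u(b)$ for $b \in k$, matching the image of $T_0$.

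For $x = z$, the main obstacle is that $\tfG_z \cong \GL(2,k) \times \GL(1,k)$ arises not by entrywise reduction but via the action of $\tG_z$ on the graded pieces $V_2 := \varpi_E\inv\LL''/\LL'$ (basis $\{\varpi_E\inv e_1,\, e_3\}$) and $V_1 := \LL'/\LL''$ (basis $\{e_2\}$) of the lattice chain $\LL'' \subset \LL' \subset \varpi_E\inv\LL''$.  Computing this action on the matrix realization of an element of $T_0$ yields $\smattwo{1}{0}{\bar b}{1}$ on $V_2$ and $1$ on $V_1$, which sits inside $\finiteG_z\conn = \SL(2,k)$.  Performing the same reduction on $\tG_\F$ identifies $\ufB_z$ with the lower-triangular Borel of $\SL(2,k)$---this orientation flip relative to $\ufB_y$ being the main subtlety---whose unipotent radical is precisely the image of $T_0$ computed above.
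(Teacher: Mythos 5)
Your proof is correct and is surely what the paper means by ``a straightforward calculation''—the parametrization of $T$ via $(K')^{\times}$, the reduction of the norm relation $\alpha\iota(\alpha)=1$ modulo $\pp_E$ (using that $\iota$ is trivial on $k$), and the description of $\tfG_z$ via the graded pieces of the lattice chain are all right, as is the observation that $\ufB_z$ is lower-triangular in the $\{\varpi_E^{-1}e_1, e_3\}$ basis while $\ufB_y$ is upper-triangular.

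One point is left implicit and deserves a word: you show that the image of $T_0$ is \emph{contained} in the unipotent radical, namely in $\{u(\bar b)\}$ (resp.\ $\{\smattwo{1}{0}{\bar b}{1}\}$), but for the lemma you also need that the map $T_0\to k$, $\alpha\mapsto\bar b$, is onto, so that the image is the whole unipotent radical rather than a proper $\mathbb{F}_p$-subspace. This is quick: for $b_0\in\OK_F$ the element $\alpha=(1+b_0\varpi_{K'})/\iota(1+b_0\varpi_{K'})=(1+b_0\varpi_{K'})/(1-b_0\varpi_{K'})$ lies in $T_0$ (it has $\bar a=1$) and has $\bar b=2\bar b_0$; since $p$ is odd, $\bar b$ ranges over all of $k$.
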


The proof is a straightforward calculation.

\subsection{Depth-zero representations of $G$}
\label{sec:depth-zero-G}

\subsubsection*{Principal series of $G$}

For $\lambda\in\Hom (M,\C\mult)$, there exist
unique characters $\lambda_1\in\Hom (E\mult,\C\mult)$ and
$\lambda_2\in\Hom (E^1,\C\mult)$ such that 
\begin{equation}
\label{eqn:lambda}
\lambda\left(\left(
\begin{smallmatrix}
          \alpha & 0 & 0\phinv \\
          0 & \beta & 0\phinv \\
          0 & 0 & \iota(\alpha)\inv
  \end{smallmatrix}
\right)\right)  =
\lambda_1(\alpha)\lambda_2(\alpha\iota(\alpha)\inv\beta ),
\end{equation}
where $\alpha\in E\mult$ and $\beta\in E^1$.
By~\cite{keys:thesis}, $\ind_B^G\lambda$ is
irreducible except for in the following cases:
\begin{enumerate}[(\ref{sec:depth-zero-G}PS--1)]
\item $\lambda_1 = |\cdot|_E^{\pm 1}$
\item $\lambda_1|_{F\mult} = \omega_{E/F} \, |\cdot |_F^{\pm 1}$
\item $\lambda_1$ is nontrivial and $\lambda_1|_{F\mult}$ is trivial.
\end{enumerate}

We note that in cases~\reducible{1} and~\reducible{2},
changing the sign of the exponent has the effect of
changing $\lambda$ into one of its conjugates under the
action of the Weyl group.
Thus,
the choice of exponent can affect the representation $\ind_B^G\lambda$,
but not its set of irreducible constituents.

In case~\reducible{1},
$\ind_B^G\lambda$ has two constituents: the
one-dimensional representation $\psi=\lambda_2\circ\det$, and the
square-integrable representation $\St_G\cdot\psi$.

In case~\reducible{2},
$\ind_B^G\lambda$ also has two constituents: a square-integrable
representation $\pi^2(\lambda)$ and a non-tempered unitary
representation $\pi^n(\lambda)$.

In case~\reducible{3},
$\ind_B^G\lambda$ decomposes into a direct sum
$\pi_1(\lambda)\oplus\pi_2 (\lambda)$.

By~\cite{moy-prasad:jacquet},
$\ind_B^G\lambda$ has depth zero if and only if
$\lambda$ has depth zero.
Hence, in each of these cases, $\lambda_2$ is a character of
$E^1/(E^1\cap (1+\pp_E))\cong\{\pm 1\}$
so there are only two
possibilities for $\lambda_2$.
Thus in case~\reducible{1}, there are only two Weyl group orbits of
characters $\lambda$, hence four distinct constituents of principal series:
two characters of $G$ and two special representations.

In case~\reducible{2}, there are two possibilities for
$\lambda_1|_{F\mult}$.  Since $\lambda_1$ is trivial on $1+\pp_E$ and
$|E\mult/F\mult(1+\pp_E)| = 2$, there are four possibilities for
$\lambda_1$ on $E\mult$, hence four Weyl group orbits of characters
$\lambda$.  Therefore, there are four distinct
representations $\pi^2(\lambda)$ and four distinct $\pi^n(\lambda)$.

In case~\reducible{3}, $\lambda_1$ must equal $\omega_{K/E}$, where
$K/E$ is unramified quadratic.  Thus there are two possibilities for
$\lambda$, and these characters are not in the same Weyl group orbit.
Therefore, there are two distinct
representations $\pi_1(\lambda)$ and two distinct $\pi_2(\lambda)$.

\subsubsection*{Supercuspidal representations of $G$}
\label{sec:depth-zero-sc}
Since $G$ has no non-minimal proper parabolic subgroups, the remaining
depth-zero irreducible representations are supercuspidal.
Any such
representation has a unique expression of the form
$\ind_{\stab_G(x)}^G\sigma$,
where $x$ is either $y$ or $z$, and $\sigma$ is
the inflation to $\stab_G(x)$ of a cuspidal representation of
$\finiteG_x$.
We can write the latter
representation as $\bar\sigma\otimes\nu$ for some character
$\nu$ of $\finiteZ$ and some cuspidal representation $\bar\sigma$
of $\finiteG_x\conn $ (see~\S\ref{sub:finiteG_y} and~\S\ref{sub:finiteG_z}).
Based on the classification of such
cuspidal representations (see~\S\ref{sub:reductive-quotients}), we have the
following kinds of supercuspidal representations of depth zero:
\begin{enumerate}[(\ref{sec:depth-zero-G}SC--1)]
\item $\ind_{\stab_G(y)}^G \sigma$, where $\bar\sigma$ is a cuspidal
  representation of $\finiteG_y$.
\item $\ind_{\stab_G(z)}^G \sigma$, where $\bar\sigma$ is a cuspidal
  representation of $\finiteG_z$ of degree $q-1$.
\item $\ind_{\stab_G(z)}^G \sigma$, where $\bar\sigma$ is a cuspidal
  representation of $\finiteG_z$ of degree $(q-1)/2$.
\end{enumerate}

As discussed in \S\ref{sub:reductive-quotients},
the representation $\bar\sigma$ of $\finiteG_x\conn $
($x=y$ or $z$) is a component of a cuspidal representation arising via
Deligne-Lusztig induction from a regular character $\chi$ of an elliptic
torus in $\finiteG_x\conn $.
We note that this torus can be
taken to be $\finiteS\conn $ if $x=z$ and $(\finiteS')\conn $ if $x=y$.
In case~\depthzerosc{3}, $\chi$ is the character $\phi$ of
$\finiteS\conn $ of order $2$.

\subsection{Base Change and Endscopic Transfers}
\label{sub:transfer}
In \cite{rogawski:u3}, Rogawski establishes the base change transfers
of $L$-packets from $H$ to $\tH$ and from $G$ to $\tG$ associated to
the natural embeddings $\lsup L\uH\rightarrow\lsup L\utH$ and $\lsup
L\uG\rightarrow\lsup L\utG$, respectively.  In addition, a certain
$L$-homomorphism $\lsup L \uH\rightarrow\lsup L \uG$ (resp.~$\lsup L
\uC\rightarrow\lsup L \uH$) is associated to any character $\varsigma$
of $E\mult$ extending $\omega_{E/F}$ (see
\cite{rogawski:u3}*{\S4.8.1}), and a corresponding transfer of
$L$-packets from $H$ to $G$ (resp.~$C$ to $H$) is established.  Let
$\Xi_G$ (resp.~$\Xi_H$) denote this transfer of $L$-packets in the
particular case $\varsigma = \Omega$.  

Due to the particular form of these $L$-homomorphisms, the Langlands
philosophy suggests
Hypothesis \ref{hyp:depth},
which we are assuming throughout.

\section{Description of depth-zero $L$-packets and explicit base
  change for unitary groups in two variables}
\label{sec:u11}
In this section we describe the
$L$-packets for the quasi-split
group $H^0 = U(1,1)(F)$
(defined with respect to the Hermitian form whose matrix is
$\left(
\begin{smallmatrix}
0&1\\1&0
\end{smallmatrix}
\right)$
as well as their base change lifts to $\GL(2,E)$.

We do the same for the group $\uH^1(F)$, where
$\uH^1$ denotes the $F$-anisotropic inner form of $\uH^0$.
One can realize $\uH^1(F)$ explicitly as the compact
unitary group $U(2)$,
defined with respect to the Hermitian form whose matrix is
$
\left(\begin{smallmatrix}
1&0 \\
0&-\epsilon
\end{smallmatrix}\right)
$, where $\epsilon$ is a nonsquare unit in $F\mult$.
Note that
$\utH^0(F) \cong \utH^1(F) \cong \GL(2,E)$.

We note that this explicit description of base change
implies that the analogue of Theorem~\ref{thm:main} holds for unitary groups
in two variables,
although we omit the verifications as they are entirely analogous to
(but less complicated than) those for $U(2,1)$.

Recall that $\uH = \uH^0\times \uZ$.
For every subgroup $\underline{L}$ of $\uG$, let
$\underline{L}^0$ denote the subgroup of $\uH^0$ obtained by
projecting $\underline{L}\cap\uH$ onto the $\uH^0$ component
of $\uH$.  

\subsection{Depth-zero $L$-packets of $U(1,1)$}
\label{sec:L-packets_u11}
The $L$-packets of $H^0$ are the orbits of $\PGL(2,F)$ on the
set of equivalence classes of irreducible admissible representations
of $H^0$~\cite{rogawski:u3}*{\S11.1}.
We first describe the
principal series $L$-packets.

Let $\lambda\in\Hom (M^0,\C\mult) = \Hom (E\mult   ,\C\mult   )$.
According to~\cite{rogawski:u3}*{\S11.1}, the principal series
$\ind_{B^0}^{H^0}\lambda$ is irreducible except in the cases
\begin{enumerate}
\item
$\lambda |_{F\mult} = |\cdot|_F^{\pm 1}$
\item
$\lambda |_{F\mult} = \omega_{E/F} $.
\end{enumerate}

In the first case, $\ind_{B^0}^{H^0}\lambda$ has two constituents: the
one-dimensional representation $\psi = \mu\circ\det_{\uH^0}$, where
$\mu$ is the character of $E^1$ satisfying
$\mu\circ\N^{U(1)}|_{E\mult} = \lambda |\cdot |_E^{\mp 1/2}$;
and the representation $\St_{H^0}\cdot\psi$.
In the second case,
$\ind_{B^0}^{H^0}\lambda$ decomposes into a direct sum
$\pi_1(\lambda)\oplus\pi_2 (\lambda)$
of irreducible representations.
By~\cite{moy-prasad:jacquet},
$\ind_{B^0}^{H^0}\lambda$ has depth zero if and only if $\lambda$ has
depth zero.

The principal series $L$-packets of $G$ are as
follows~\cite{rogawski:u3}*{\S11.1}.
(Here $\lambda$ and $\psi$ denote one-dimensional
representations of $M^0$ and $H^0$, respectively.)
\begin{enumerate}
\item $\{ \ind_{B^0}^{H^0}\lambda\}$, where $\ind_{B^0}^{H^0}\lambda$
  is irreducible;
\item $\{\psi\}$; 
\item $\{\St_{H^0}\cdot\psi\}$;
\item $\{\pi_1 (\lambda),\pi_2 (\lambda)\}$, where
  $\ind_{B^0}^{H^0}\lambda$
is reducible of the second type described above.
\end{enumerate}

The remaining irreducible representations and $L$-packets of $H^0$ are
supercuspidal.
The depth-zero supercuspidals of $H^0$ have a
unique expression of the form $\ind_{H^0_z}^{H^0} \sigma$, where
$\sigma$ is the inflation to $H^0_z$ of an irreducible cuspidal
representation $\bar\sigma$ of $\finiteH^0_z\cong \SL(2,k)$.
Note that since $\finiteH_z^0\cong\finiteG_z\conn$ the results
of \S\ref{sub:reductive-quotients} on $\finiteG_z\conn $ apply to
$\finiteH^0$, as well.
Therefore, we have the following kinds of supercuspidal
representations of $H^0$.
\begin{enumerate}[(\ref{sec:L-packets_u11}SC--1)]
\item $\ind_{G_z}^G \sigma$, where $\bar\sigma$ is a cuspidal
  representation of $\finiteH^0_z$ of dimension $q-1$.
\item $\ind_{G_z}^G \sigma$, where $\bar\sigma$ is a cuspidal
  representation of $\finiteH^0_z$ of dimension $(q-1)/2$.
\end{enumerate}

It is easily checked that $\PGL (2,F)$ acts trivially on the
equivalence class of any supercuspidal of the form~\Hdepthzerosc{1},
hence these representations are elements of $L$-packets of size $1$.
On the other hand, if $\epsilon$ is a nonsquare unit of $F\mult$, then
$\smattwo\epsilon001$
exchanges the two classes consisting of representations of the
form~\Hdepthzerosc{2}.
Thus $H^0$ has one depth-zero supercuspidal $L$-packet of size $2$.

\subsection{Base change lifts for $U(1,1)$}
\label{sec:bc-u11}
By~\cite{rogawski:u3}*{\S11.4},
the base change lifts of principal series
$L$-packets of $H^0$ are as follows.
Let $\lambda\in\Hom
(M^0,\C\mult)$, and let $\tilde\lambda$ denote the character of
$\tM^0$ lifted from $\lambda$ (see~\S\ref{sub:shintani}).
\begin{enumerate}[(i)]
\item 
If $\ind_{B^0}^{H^0}\lambda$ is irreducible and
  $\ind_{\widetilde B^0}^{\widetilde H^0}\tilde\lambda$ is irreducible,
  then the base
  change lift of the $L$-packet $\{ \ind_{B^0}^{H^0} \lambda\}$ is
  $\ind_{\widetilde{B}^0}^{\widetilde{H}^0}\tilde\lambda$.
\item
If $\ind_{B^0}^{H^0}\lambda$ is irreducible but
  $\ind_{\widetilde B^0}^{\widetilde H^0}\tilde\lambda$ is reducible,
  it must be the case that $\lambda|_{F\mult} = |\phantom{x}|_F^{\pm
    1}\omega_{E/F}$, and
  the base
  change lift of the $L$-packet $\{ \ind_{B^0}^{H^0} \lambda\}$ is
  $\left(\lambda |\phantom{x}|_E^{\mp 1/2}\right)\circ \det_{\utH^0}$.
\item 
If $\lambda|_{F\mult} = |\cdot |_F^{\pm 1}$, let $\psi$ be the
  one-dimensional representation $\mu\circ\det_{\uH^0}$,
  where $\mu\circ\N^{U(1)}|_{E\mult} = \lambda |\cdot |_E^{\mp 1/2}$.
  Then the lift of
  the $L$-packet consisting of the constituent $\psi$ (resp., the
  constituent $\St_{H^0}\cdot\psi$) of
  $\ind_{B^0}^{H^0}\lambda$ is the one-dimensional constituent
  $\tilde{\psi} = (\lambda |\cdot |_E^{\mp
    1/2})\circ\det_{\utH^0}$ (resp., the
  constituent $\St_{\widetilde{H}^0}\cdot\tilde{\psi}$) of
  $\ind_{\widetilde{B}^0}^{\widetilde{H}^0}\tilde\lambda$.
\item If $\lambda|_{F\mult} = \omega_{E/F}$, then the lift of the
  $L$-packet $\{\pi_1(\lambda),\pi_2(\lambda)\}$ is
  $\ind_{\widetilde{B}^0}^{\widetilde{H}^0}\tilde\lambda$.
\end{enumerate}

Now let $\Pi$ be the supercuspidal $L$-packet comprising the two
representations of the form~\Hdepthzerosc{2}.

Using Hypothesis \ref{hyp:depth} and a process of elimination,
it can be checked that $\Pi$ is the transfer via $\Xi_H$
from the endoscopic group
$\uC^0$ of the character $\varphi = {\bf 1}\otimes\phi$ of $C^0\cong E^1\times
E^1$, where
$\phi$ is the nontrivial depth-zero character of $E^1$ of order $2$
(see~\cite{rogawski:u3}*{\S11.1}).
The torus $\utM$ is $\tH^0$-conjugate to the torus $\utC$, say $\utC =
\lsup{h}\tM$, where $h\in \tH^0$.  It
follows from~\cite{rogawski:u3}*{Prop.~11.4.1(a)}
that the base change
lift of $\Pi$ is the representation
$$
\bigl(\ind_{\lsup{h}\widetilde{B}^0}^{\tH^0}
\tilde\varphi\bigr)\cdot\bigl(\Omega\inv\circ\det_{\utH^0}\bigr),
$$
where $\tilde\varphi$ is the character of $\tC$ lifted from $\varphi$.
Note that there are two possible choices for $\Omega$,
but up to equivalence, this lift does not depend on the choice.

We now discuss the base change lifts of the singleton depth-zero
supercuspidal $L$-packets.
Let $\rho$ be a depth-zero supercuspidal
representation of $H^0$ arising from a cuspidal representation
$\bar\sigma$ of $\finiteH_z^0$ of degree $q-1$.  Then (as in the case
of $\finiteG_z\conn$),
$\theta_{\bar\sigma}=-R_{\ufS^0}^{\ufH_z^0}\chi$ for some regular
character $\chi$ of $\finiteS^0$.
Since the character $\tilde\chi$ of $\utfS^0$ lifted from $\chi$ via
$N_\varepsilon^{\utfS^0}$ is in general position, there is a cuspidal
representation $\tilde{\bar\sigma}$ of $\widetilde{\finiteH}_z^0$ such that
$\theta_{\tilde{\bar\sigma}}
= -R_{\widetilde{\ufS}^0}^{\widetilde{\ufH}_z^0}\tilde\chi$.
We note that $\tilde{\bar\sigma}$ depends only on $\bar\sigma$ and not
on the choice of $\chi$.  Let $\tilde\sigma$ be the inflation to
$\widetilde{H}_z^0$ of $\tilde{\bar\sigma}$.
We can extend $\tilde\sigma$ uniquely to a
representation (which we will also denote by $\tilde\sigma$) of
$\widetilde{H}_z^0\widetilde{Z}^0$ in such a way that
$\omega_{\tilde\sigma}(\varpi_E) = \chi(-1)$.
(Here we are
identifying $\widetilde{Z}^0$ with $E\mult$ and $Z^0$ with $E^1$.)

\begin{prop}
\label{prop:bc_H_sc}
The base change lift of $\{\rho\}$ is
$\ind_{\widetilde{H}_z^0\widetilde{Z}^0}^{\widetilde{H}^0}\tilde\sigma$.
\end{prop}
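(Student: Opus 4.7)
The plan is to establish the proposition by verifying the Shintani character identity
$$\theta_{\tilde\rho,\varepsilon}(g) \;=\; \theta_\rho\bigl(\N(g)\bigr)$$
for all strongly $\varepsilon$-regular $g\in\tH^0$, where $\tilde\rho = \ind_{\widetilde H_z^0\widetilde Z^0}^{\widetilde H^0}\tilde\sigma$. By Rogawski's characterization of base change for $U(1,1)$ in terms of the Shintani (twisted) character relation (as invoked in \S\ref{sub:shintani} and \S\ref{sub:transfer}), this identity will identify $\tilde\rho$ as the base change lift of $\{\rho\}$.

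First I would check that $\tilde\rho$ is $\varepsilon$-invariant and fix the normalization of the intertwiner. Since $\tilde\chi = \chi\circ N^{\utfS^0}_\varepsilon$ is visibly $\varepsilon$-invariant, the cuspidal representation $\tilde{\bar\sigma}$ of $\widetilde\finiteH_z^0 = \GL(2,k)$ with character $-R_{\utfS^0}^{\utfH_z^0}\tilde\chi$ is $\varepsilon$-invariant. The prescribed central character value $\omega_{\tilde\sigma}(\varpi_E)=\chi(-1)$ uniquely extends $\tilde{\bar\sigma}$ (inflated) to $\widetilde H_z^0\widetilde Z^0$, and compact induction produces a representation that is $\varepsilon$-stable and of depth zero.

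Next I would reduce the $p$-adic twisted character computation to a finite-group computation. Using the Frobenius formula for the character of a compactly induced representation (in its twisted version), together with the fact that $\tilde\rho$ has depth zero and the standard localization argument \emph{à la} Moy--Prasad, the value $\theta_{\tilde\rho,\varepsilon}(g)$ on a strongly $\varepsilon$-regular $g$ that is topologically $\varepsilon$-semisimple is a sum of values of the twisted character $\theta_{\tilde\sigma,\varepsilon}$ on the reductive quotient $\widetilde\finiteH_z^0$, evaluated at the image $\bar g$ of (conjugates of) the topologically semisimple part of $g$. A parallel computation for $\rho = \ind_{H_z^0}^{H^0}\sigma$ writes $\theta_\rho(\N(g))$ as a sum of values of $\theta_{\bar\sigma}$ on $\finiteH_z^0 = \SL(2,k)$ at the image of the topologically semisimple part of $\N(g)$. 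Since the $p$-adic norm on $\tH_z^0\widetilde Z^0$ reduces, modulo the pro-$p$ radical, to the finite norm $N^{\widetilde\finiteH_z^0}_\varepsilon$, matching the two sides reduces to the finite-group twisted character identity
$$\theta_{\tilde{\bar\sigma},\varepsilon}(\bar g) \;=\; \theta_{\bar\sigma}\bigl(N^{\widetilde\finiteH_z^0}_\varepsilon(\bar g)\bigr)$$
on $\varepsilon$-regular elements of $\widetilde\finiteH_z^0$.

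This finite identity is Shintani descent for Deligne--Lusztig characters: because $\theta_{\tilde{\bar\sigma}} = -R_{\utfS^0}^{\utfH_z^0}\tilde\chi$ with $\tilde\chi$ pulled back from $\chi$ via $N^{\utfS^0}_\varepsilon$, and $\theta_{\bar\sigma} = -R_{\ufS^0}^{\ufH_z^0}\chi$, the identity is a special case of Digne's theorem (\cite{digne:shintani}) realizing $\bar\sigma$ as the Shintani lift of $\tilde{\bar\sigma}$ for the finite reductive group with the automorphism $\varepsilon$. Compatibility of the tracking of intertwiner normalizations with the chosen $\omega_{\tilde\sigma}(\varpi_E) = \chi(-1)$ must then be checked; this is where the condition on $\chi(-1)$ enters and pins down the unique extension to $\widetilde H_z^0\widetilde Z^0$.

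The main obstacle is the bookkeeping at the two normalization points: matching the intertwiner $\tilde\rho(\varepsilon)$ against the choice of $\varepsilon$-action on $\tilde{\bar\sigma}$ used in Digne's finite Shintani identity, and propagating this through the extension of $\tilde\sigma$ to the group $\widetilde H_z^0\widetilde Z^0$ so that the $p$-adic twisted character on strongly $\varepsilon$-regular elements meeting $\widetilde Z^0$ has the correct sign. Once this is handled, the character identity above together with Hypothesis \ref{hyp:depth} and Rogawski's Shintani characterization of base change yields the conclusion.
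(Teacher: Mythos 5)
Your overall strategy is genuinely different from the paper's, and it has two gaps that I think are fatal as written.

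First, the appeal to Digne's finite Shintani descent does not apply here. Because $E/F$ is \emph{ramified}, the automorphism $\varepsilon$ induces on the reductive quotient $\widetilde{\finiteH}_z^0\cong\GL(2,k)$ not a Frobenius twist but the algebraic automorphism $x\mapsto x/\det(x)$ (see \S\ref{sub:finiteG_z}); the residue fields of $E$ and $F$ coincide, so there is no extension $k_E/k_F$ for Shintani descent to descend along. This is precisely why the paper introduces the separate $\varepsilon$-lifting rather than citing finite base change. There is no off-the-shelf identity $\theta_{\tilde{\bar\sigma},\varepsilon}(\bar g)=\theta_{\bar\sigma}(N_\varepsilon(\bar g))$ to quote; the paper instead computes the twisted character directly, observing that $\lsup{\varepsilon}h\,h\inv\in\widetilde{\finiteZ}^0$ for all $h$, so that $\tilde{\bar\sigma}_*(\varepsilon)=\pm\mathrm{id}$ and the twisted character is $\pm$ the ordinary one --- and even then the sign is left undetermined at that stage.

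Second, your plan requires the twisted character identity on \emph{all} strongly $\varepsilon$-regular $g$, but the localization machinery (\cite{adler-lansky:bc-u3-unram}*{Prop.~7.1}) only evaluates $\theta_{\tilde\rho,\varepsilon}$ at elements like $\delta\in\tS^0_0$ whose norm lies in a unique parahoric with regular reduction. You would still have to handle split-torus elements, elements with nontrivial topologically unipotent part, and elliptic elements not meeting $\widetilde{H}_z^0\widetilde{Z}^0$, none of which reduce to a clean finite-group computation. The paper avoids both problems by running the argument in the opposite direction: it starts from the lift $\tilde\rho_*$, which Rogawski guarantees exists and satisfies the Shintani relation; uses square-integrability, Hypothesis~\ref{hyp:depth}, and $\varepsilon$-invariance to force $\tilde\rho_*=\ind\tilde\sigma_*$ with $\tilde{\bar\sigma}_*$ attached to some $\tilde\psi$; and then compares the two computations of $\theta_{\tilde\rho_*,\varepsilon}(\delta)$ at the handful of very regular $\delta$ where localization works. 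Lemma~\ref{lem:finite_torus} then pins down $\psi=\chi^{\pm1}$ and simultaneously resolves the sign you flag as your ``main obstacle.'' If you want to salvage a direct-verification proof, you would need to replace the citation of Digne by the results of \cite{adler-lansky:lifting} (or the explicit computation above) and supply the character identity off the very regular set, which is substantially more work than the comparison argument.
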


\begin{proof}
  Let $\tilde\rho_*$ be the base change lift of $\{\rho\}$ and let
  $\tilde\rho =
  \ind_{\widetilde{H}_z^0\widetilde{Z}^0}^{\widetilde{H}^0}\tilde\sigma$.
  According to~\cite{rogawski:u3}*{\S11.4},
  $\tilde\rho_*$ is the Shintani lift of $\tilde\rho$ so we can choose
  $\tilde\rho_*(\varepsilon)$ so that
  $\theta_{\tilde\rho_*,\varepsilon} = \theta_\rho\circ\N^{\uH^0}$.
  Choose $\gamma$ in $S^0$ such
  that the image $\bar\gamma$ of $\gamma$ in $\finiteH^0_z$ is
  regular.  It follows from~\S\ref{sec:cartan-G} that $S^0\cong\ker
  N_{EE'/E'}$ and $\tS^0\cong (EE')\mult$.  Furthermore, these
  isomorphisms can be chosen so that the map $\N^{\uS^0}:\tS^0\rightarrow S^0$
  corresponds to the map $(EE')\mult\rightarrow\ker N_{EE'/E'}$ given
  by $x\mapsto x/\iota(x)$.  Since the latter is surjective even when
  restricted to $\OK_{EE'}\mult$, $\N^{\uS^0}$ must map $\tS^0_0$ onto
  $S^0$.  Thus, there exists $\delta\in\widetilde{S}_0^0\cap
  (\widetilde{H}^0)\ereg$ with $\N(\delta)=\gamma$.
  By~\cite{adler-lansky:bc-u3-unram}*{Lemma 2.1},
  $\gamma$ is
  contained in a unique parahoric subgroup of $H^0$, namely $H_z^0$.
  By~\cite{adler-lansky:bc-u3-unram}*{Prop.~7.1}, it follows that
  $\theta_\rho(\gamma) = \theta_\sigma(\gamma)$.
  Therefore, 
\begin{equation}
\label{eq:twisted_char1}
\theta_{\tilde\rho_*,\varepsilon}(\delta) =
\theta_{\rho}(\gamma) = \theta_\sigma (\gamma) = \theta_{\bar\sigma}
(\bar\gamma) = -\chi(\bar\gamma)-\chi\inv(\bar\gamma),
\end{equation}
where the last equality is the Deligne-Lusztig character
formula (see~\cite{srinivasan:finite-book}*{Thm.~6.8}).
  
  From~\cite{rogawski:u3}*{Prop.~11.4.1(c)}, $\tilde\rho_*$ is square
  integrable and is the base change lift of a unique square integrable
  representation.  Hence by (iii) above, $\tilde\rho_*$ cannot be a
  twist of the Steinberg representation, and so it must be
  supercuspidal.
  From Hypothesis \ref{hyp:depth}, $\tilde\rho_*$ has depth zero, so it must
  therefore be of the form
  $\ind_{\widetilde{H}^0_z\widetilde{Z}^0}^{\widetilde{H}^0}\tilde\sigma_*$,
  where $\tilde\sigma_*$ is an extension to $\widetilde{H}^0_z\widetilde{Z}^0$
  of the representation of $\widetilde{H}^0_z$ inflated from a cuspidal
  representation $\tilde{\bar\sigma}_*$ of $\widetilde{\finiteH}^0_z$.

  Since $\tilde\rho_*$ is $\varepsilon$-invariant, it follows
  from~\cite{moy-prasad:k-types}*{Thm.~5.2}
  that $\lsup\varepsilon \tilde{\bar\sigma}_*$ and $\tilde{\bar\sigma}_*$
  are associate, and this easily implies
  that $\tilde{\bar\sigma}_*$ is $\varepsilon$-invariant.
  Since $\tilde{\bar\sigma}_*$ is an irreducible cuspidal representation
  of $\widetilde{\finiteH}^0_z\cong\GL (2,k)$, it must be the case that
  $\tilde{\bar\sigma}_*$ arises via Deligne-Lusztig induction from a
  character $\varphi$ of $\widetilde{\finiteS}^0$ in general position
  with respect to the action of
$W(\widetilde\finiteS^0,\widetilde\finiteH_z^0)$.
Since
  $\tilde{\bar\sigma}_*$ is $\varepsilon$-invariant, we must have that
  $\lsup\varepsilon\varphi$ is in the
  $W(\widetilde\finiteS^0,\widetilde\finiteH_z^0)$-orbit
  of $\varphi$.  If $\lsup\varepsilon\varphi\neq\varphi$, it is easy to see
  that $\varphi$ has order dividing $2$, hence is not in general position.
Thus
  $^\varepsilon\varphi = \varphi$.
  It follows that $\varphi = \tilde\psi$ for some character
  $\psi$ of $\finiteS^0$ in general position with respect to
  the action of $W(\finiteS^0,\finiteH_z^0)$.
Let $\bar\sigma_*$ be the cuspidal
  representation of $\finiteH_z^0$ obtained via Deligne-Lusztig induction
  from $\psi$.

  Extend
  $\tilde\rho_*$ to a representation (also denoted
  $\tilde\rho_*$) of $\widetilde{H}^0\langle\varepsilon\rangle$
  such that for $\varepsilon$-regular $h\in\widetilde{H}^0$, we have
  $$
  \theta_{\tilde\rho_*}(h\varepsilon)
   = \theta_{\tilde\rho_*,\varepsilon}(h).
  $$
  As a representation of $\widetilde{H}^0\langle\varepsilon\rangle$,
  $$
  \tilde\rho_* =
  \ind_{\widetilde{H}_z^0\widetilde{Z}^0\langle\varepsilon\rangle}
  ^{\widetilde{H}^0\langle\varepsilon\rangle}
  \tilde\sigma_*,
  $$
  where $\tilde\sigma_*$ is extended compatibly from
  $\widetilde{H}_z^0\widetilde{Z}^0$ to
  $\widetilde{H}_z^0\widetilde{Z}^0\langle\varepsilon\rangle$.  This
  extension, in turn,
  determines an extension of $\tilde{\bar\sigma}_*$ to
  $\widetilde{\finiteH}_z^0\langle\varepsilon\rangle$.

Note that $\delta\in\tS^0_0\subset\widetilde{H}_z^0$.  Furthermore, an
argument in the proof
of~\cite{adler-lansky:bc-u3-unram}*{Prop.~5.5}
shows that $\delta\varepsilon$ lies
in a unique conjugate of
$\widetilde{H}_z^0\widetilde{Z}^0\langle\varepsilon\rangle$
(namely, $\widetilde{H}_z^0\widetilde{Z}^0\langle\varepsilon\rangle$ itself).
Therefore,
\cite{adler-lansky:bc-u3-unram}*{Prop.~7.1} implies
that
\begin{equation}
\label{eq:twisted_char2}
\theta_{\tilde\rho_*,\varepsilon}(\delta) = 
\theta_{\tilde\rho_*}(\delta\varepsilon) =
\theta_{\tilde\sigma_*}(\delta\varepsilon) =
\theta_{\tilde{\bar\sigma}_*}(\bar\delta\varepsilon),
\end{equation}
where $\bar\delta$ is the image of $\delta$ in $\widetilde{\finiteH}_z^0$.

To compute $\theta_{\tilde{\bar\sigma}_*}(\bar\delta\varepsilon)$,
consider the representation $^\varepsilon\tilde{\bar\sigma}_*$.
Since $\tilde\psi$ factors through $\N$, $\tilde\psi$ is trivial on
$\widetilde{\finiteZ}^0$, and so $\tilde{\bar\sigma}_*$ has trivial
central character.  As in the case of $\tfG_z$
(see~\S\ref{sub:finiteG_z}), $\varepsilon$ acts on $\tfH_z^0$ via
$x\rightarrow x/\det(x)$.  Therefore, for any
$h\in\widetilde{\finiteH}_z^0$, we have
that $^\varepsilon hh\inv = 1/\det h\in\widetilde{\finiteZ}^0$.
Thus $\tilde{\bar\sigma}_*(^\varepsilon hh\inv)$ is the identity so
that $^{\varepsilon}\tilde{\bar\sigma}_* = \tilde{\bar\sigma}_*$.
(Note that before, we only had that $^\varepsilon\tilde{\bar\sigma}_*$ and
$\tilde{\bar\sigma}_*$ were equivalent.)
In other words,
$\tilde{\bar\sigma}_*(\varepsilon) = \pm \text{id}$.
This together with \eqref{eq:twisted_char2} and the
Deligne-Lusztig character formula imply that
$\theta_{\tilde\rho_*,\varepsilon}(\delta) =
\theta_{\tilde{\bar\sigma}_*}(\bar\delta\varepsilon)$ is equal to
\begin{equation}
\label{eq:twisted_char3}
\pm\theta_{\tilde{\bar\sigma}_*}(\bar\delta) =
\pm(\tilde\psi(\bar\delta)+\,^w\tilde\psi(\bar\delta)) =
\pm(\psi(\bar\gamma)+\psi\inv(\bar\gamma)),
\end{equation}
where $w$ is the nontrivial element of
$W(\widetilde\finiteS^0,\widetilde\finiteH_z^0)$.
It follows that from \eqref{eq:twisted_char1} and
\eqref{eq:twisted_char3} that
$\chi(\bar\gamma)+\chi\inv(\bar\gamma) 
= \pm (\psi(\bar\gamma)+\psi\inv(\bar\gamma))$.
Letting $\gamma$ vary over all elements of $S^0$
with regular image in $\finiteH_z^0$, parts
(\ref{item:finite-torus-chichiinv-psipsiinv}) and
(\ref{item:finite-torus-chi-psi}) of Lemma~\ref{lem:finite_torus}
then imply that $\psi = \chi^{\pm 1}$ if $q\neq 7$
(and that the sign in~\eqref{eq:twisted_char3}
must be $+$).  Thus $\tilde{\bar\sigma}_*$ is equivalent to
$\tilde{\bar\sigma}$.

Moreover, since 
$\omega_{\tilde\rho_*} = \omega_\rho\circ\N^{\uZ^0}$,
$$
\omega_{\tilde\sigma_*}(\varpi_E)
= \omega_{\tilde\rho_*}(\varpi_E)
= \omega_\rho(-1)
= \omega_\sigma(-1)
= \chi(-1)
= \omega_{\tilde\sigma}(\varpi_E).
$$
It follows that the extensions of $\tilde\sigma$ and $\tilde\sigma_*$ to
$\widetilde{H}_z^0\widetilde{Z}^0$ are equivalent.
The theorem follows.
\end{proof}

\subsection{Depth-zero $L$-packets for $U(2)$}
The group $H^1=U(2)(F)$ is compact, so its building has only one point: $y$.
The corresponding parahoric subgroup has index two in $H^1$.
The quotient $H^1/{H^1_y}^+$ is isomorphic to the orthogonal group
$\OO(2)(k)$ defined with respect to the form
$\left(
\begin{smallmatrix}
1&0 \\
0&-\bar\epsilon
\end{smallmatrix}\right)
$
Here $\overline\epsilon$ is the image in $k$ of the nonsquare unit
$\epsilon\in F\mult$.
We may identify $\SO(2)(k)$ with the group of norm-one elements
of the quadratic extension of $k$.
Fix an element $h$ of $\OO(2)(k)$ that does not belong to $\SO(2)(k)$.
Then conjugation by $h$ acts on $\SO(2)(k)$ by inversion.
This allows us to classify the representations of $\OO(2)(k)$,
and thus the depth-zero representations of $H^1$,
and to gather these latter representations into $L$-packets.
Our $L$-packets are determined by characters of $\SO(2)(k)$.
Let $\chi$ denote such a character.
\begin{enumerate}
\item
Suppose that $\chi\neq \chi\inv$.
Then $\infl\ind_{\SO(2)(k)}^{\OO(2)(k)}\chi$ is irreducible, and forms
a singleton $L$-packet $\Pi^1_\chi$.
\item
Suppose that $\chi$ is trivial.
Then $\chi$ extends to $\OO(2)(k)$ in two ways,
yielding a trivial character $\chi_+$ and a quadratic
character $\chi_-$.
Inflating these extensions, we obtain two singleton
$L$-packets for $H^1$, which we will denote by
$\Pi^1_+$ and $\Pi^1_-$, respectively.
\item
Suppose that $\chi$ is
the nontrivial quadratic character of $\SO(2)(k)$.
Then again $\chi$ extends to $\OO(2)(k)$ in two ways.
Inflating each character to $H^1$, we obtain an $L$-packet
$\Pi^1_\chi$
of size two.
\end{enumerate}
It is easy to see that for each $L$-packet above,
the sum of the characters in an $L$-packet is stable.
Moreover, our $L$-packets are minimal with respect to
this property.

\subsection{Base change lifts for $U(2)$ via a Jacquet-Langlands-like
correspondence}
\label{sec:bc-u2}

Since $\underline{H}^1$ is an inner form of $\underline{H}^0$,
we can obtain a base change lift if we can associate
each depth-zero $L$-packet for $H^1$ to one for $H^0$.
This association will be similar to the Jacquet-Langlands
correspondence (see \cites{badulescu:thesis,dkv:amt,rogawski:amt}).
That is, given a depth-zero $L$-packet $\Pi^1$ for $H^1$,
we want to find a depth-zero $L$-packet $\Pi^0$ for $H^0$ such that
\begin{equation}
\label{eqn:jl-u2}
\sum_{\pi\in\Pi^1} \theta_\pi(g_1)
=
-
\sum_{\pi\in\Pi^0} \theta_\pi(g_0)
\end{equation}
for all regular $g_1\in H^1$
and $g_0\in H^0$ whose stable conjugacy classes are
associated in a natural way.

Define a map $\JL$ from the depth-zero $L$-packets of $H^1$
to those of $H^0$ as follows.
We let $\JL(\Pi^1_+)$ be the Steinberg representation
of $H^0$, and we let $\JL(\Pi^1_-)$ be the twist of
this representation by the depth-zero quadratic character of $H^0$.
If $\chi\neq\chi\inv$, 
then $\JL(\Pi^1_\chi)$
is the supercuspidal representation of the form \Hdepthzerosc{1}
coming from the character $\chi$.
If $\chi$ is the nontrivial quadratic character, then
$\JL(\Pi^1_\chi)$
is the $L$-packet consisting of the two representations
of the form \Hdepthzerosc{2}.

Using Lemma~\ref{lem:finite_torus},
it is not difficult to see that if $q>3$ and $q\neq 7$,
then $\JL$ is the only correspondence
that satisfies
\eqref{eqn:jl-u2}
for all $g_1 \in H^1$ whose image in $\SO(2)(k)$ is regular.
Thus, if we assume that there is a Jacquet-Langlands-like correspondence
from the depth-zero $L$-packets of $H^0$ to those of $H^1$, then
it must be $\JL$.

According to Conjecture 1 of \cite{dprasad:theta-unitary},
if it exists, then
$\JL$ is related to the theta correspondence between $H^0$ and $H^1$
in the following way.  Given an $L$-packet $\Pi$ for $H^1$, some
representation $\pi\in \Pi$ shows up in the theta correspondence,
and $\Theta(\pi) = \JL(\Pi)$.

\section{Description of depth-zero $L$-packets
for ramified unitary groups in
  three variables}
\label{sec:u21}

\subsection{$L$-Packets consisting of principal series constituents}
\label{sub:ps}
The following proposition is due to
Rogawski~\cite{rogawski:u3}*{\S12.2}.

\begin{prop}
\label{prop:ps_L-packets}
The $L$-packets of $G$ that consist entirely of principal series
constituents all have one of the following forms
(where $\lambda$ and $\psi$ denote one-dimensional representations
of $M$ and $G$, respectively):
\begin{enumerate}[(\ref{sub:ps}--1)]
\item  $\{ \ind_P^G\lambda\}$,
  where $\ind_P^G\lambda$ is irreducible;
\item $\{\psi\}$; 
\item
$\{\St_G\cdot\psi\}$;
\item $\{\pi_1 (\lambda),\pi_2 (\lambda)\}$, where $\ind_P^G\lambda$ is
  reducible of type \textup{\reducible{3}}.
\item $\{\pi^n (\lambda)\}$, where $\ind_P^G\lambda$ is
  reducible of type \textup{\reducible{2}}.
The representation $\pi^n (\lambda)$
is contained in the $A$-packet
$\Pi (\lambda) = \{ \pi^n (\lambda),\pi^s (\lambda)\}$,
where $\pi^s (\lambda)$
is a supercuspidal
representation that sits inside an $L$-packet with the 
square-integrable principal series constituent $\pi^2(\lambda)$.
\end{enumerate}
\end{prop}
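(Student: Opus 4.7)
The plan is to invoke Rogawski's general classification of tempered $L$-packets and of those $A$-packets containing non-tempered constituents for quasi-split $U(2,1)$ (see \cite{rogawski:u3}*{\S12.2}) and combine it with the explicit reducibility analysis of principal series given in \S\ref{sec:depth-zero-G}. The depth-zero condition does not alter the $L$-packet structure; by~\cite{moy-prasad:jacquet} it merely restricts the allowed characters $\lambda$ of $M$. Accordingly, for each of the possible behaviors of $\ind_B^G\lambda$ (irreducible, or reducible of type \reducible{1}, \reducible{2}, or \reducible{3}), one simply reads off the packet to which each constituent is assigned.

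For $\lambda$ such that $\ind_B^G\lambda$ is irreducible, the singleton $\{\ind_B^G\lambda\}$ is an $L$-packet essentially by definition, giving (PS-1). In case \reducible{1} the two constituents are the character $\psi$ and the square-integrable $\St_G\cdot\psi$, and because $L$-packets respect the tempered/non-tempered dichotomy, each constituent forms its own $L$-packet; this gives (PS-2) and (PS-3). In case \reducible{3} both constituents $\pi_1(\lambda)$ and $\pi_2(\lambda)$ are tempered and are swapped by the relevant endoscopic datum attached to $\lambda$, so Rogawski groups them into a single tempered $L$-packet, giving (PS-4).

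The main case, and the only nontrivial part of the proof, is \reducible{2}. Here $\ind_B^G\lambda = \pi^n(\lambda)\oplus\pi^2(\lambda)$ with $\pi^n(\lambda)$ non-tempered and $\pi^2(\lambda)$ square-integrable; temperedness considerations force $\{\pi^n(\lambda)\}$ to be a singleton $L$-packet. Rogawski's endoscopic transfer from $(U(1,1)\times U(1))(F)$ associated to $\lambda$ produces a depth-zero supercuspidal $\pi^s(\lambda)$ of $G$ such that $\{\pi^2(\lambda),\pi^s(\lambda)\}$ is a size-two tempered $L$-packet, while $\{\pi^n(\lambda),\pi^s(\lambda)\}$ is the non-tempered $A$-packet $\Pi(\lambda)$ attached to the corresponding $A$-parameter. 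The delicate point, more a bookkeeping matter than a genuine obstacle, is that $\pi^s(\lambda)$ belongs to two distinct packets at once; this is exactly Rogawski's description of the $A$-parameter of $\lambda$. Hypothesis~\ref{hyp:depth}, together with the fact that $\pi^s(\lambda)$ arises via endoscopic transfer from $(U(1,1)\times U(1))(F)$ of a depth-zero packet, guarantees that $\pi^s(\lambda)$ is itself of depth zero, so the whole picture remains inside the depth-zero setting we care about.
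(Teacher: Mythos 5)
Your proposal is correct and matches the paper's treatment: the paper offers no argument beyond the attribution ``due to Rogawski \cite{rogawski:u3}*{\S12.2},'' and your walkthrough is just a faithful unpacking of that citation combined with the reducibility analysis of \S\ref{sec:depth-zero-G}. The only loose phrase is that ``temperedness considerations force $\{\pi^n(\lambda)\}$ to be a singleton'' --- strictly this follows from the Langlands-quotient description of the packet (the inducing data live on a torus), not from non-temperedness alone --- but since the whole statement is Rogawski's, this does not affect correctness.
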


In the depth-zero setting,
the representation $\pi^s (\lambda)$ will be
explicitly described in~\S\ref{sub:non-sc}.

\subsection{Supercuspidal $L$-packets}
\label{sub:sc}
In order to compute the depth-zero supercuspidal $L$-packets of $G$,
we need two preliminary results.  The first gives the values of the
characters of depth-zero supercuspidal representations on ``very
regular'' elements of certain elliptic tori.  The second determines
the sizes of such $L$-packets and asserts that they are all transfers
via $\Xi_G$ of $L$-packets of $H$.

Let $\underline{S}$ and $\underline{S}'$ be the ramified quadratic
tori defined in \S\ref{sec:quadratic_torus}.
Let $\gamma\in S$ be a regular element of $G$ whose image $\bar\gamma$
in $\finiteS\cong\finiteS\conn \times\finiteZ$
lies in $\finiteS\conn $ and is regular in $\finiteG_z\conn $.
Then~\cite{adler-lansky:bc-u3-unram}*{Lemma 2.1}
implies
that $\gamma$ lies in a unique parahoric subgroup of $G$,
namely $G_z$.
By Proposition~\ref{prop:quadratic_tori}, there is an element $g\in\uG(E)$
such that $gSg\inv = S'$.  Set $\gamma' = g\gamma g\inv$.
Then similarly,
we have that $\gamma'$ lies in a unique maximal compact subgroup of
$G$, namely $G_y$.

The next lemma concerns character values of supercuspidal
representations of depth zero.
Recall that a regular character $\chi$
of $\finiteS\conn $ or $(\finiteS')\conn $ together with a character
$\nu$ of $\finiteZ$ gives rise to a supercuspidal representation of $G$
as explained in~\S\ref{sec:depth-zero-sc}.
\begin{lem}
\label{lem:char_values}
  Let $\pi$ be a supercuspidal representation of $G$ of depth zero.
  Then, in the notation of \S\ref{sec:depth-zero-sc},
\begin{align*}
\theta_\pi(\gamma) &= 
\begin{cases}
0 & \text{if $\pi$ is of type~\depthzerosc{1}},\\
-\left[\chi(\bar\gamma) + \chi\inv(\bar\gamma)\right]
& \text{if $\pi$ is of type~\depthzerosc{2}},\\
-\phi(\bar\gamma)
& \text{if $\pi$ is of type~\depthzerosc{3}}, 
\end{cases}
\\
\theta_\pi(\gamma') &=
\begin{cases}
-\left[\chi(\bar\gamma) + \chi\inv(\bar\gamma)\right]
& \text{if $\pi$ is of type~\depthzerosc{1}},\\
0 & \text{if $\pi$ is of type~\depthzerosc{2}},\\
0 & \text{if $\pi$ is of type~\depthzerosc{3}}. 
\end{cases}
\end{align*}
where $\phi$ is the character of $\finiteS\conn $ of order $2$.
\end{lem}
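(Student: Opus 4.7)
The proof will reduce the computation to two ingredients already in hand: the character formula for compactly induced representations (essentially \cite{adler-lansky:bc-u3-unram}*{Prop.~7.1}), and the Deligne-Lusztig character formula \cite{srinivasan:finite-book}*{Thm.~6.8} applied to the reductive quotient $\finiteG_x\conn$. The first step is to observe that since $\bar\gamma$ is regular in $\finiteG_z\conn$ and $\gamma \in S \subset G_z$, \cite{adler-lansky:bc-u3-unram}*{Lemma~2.1} implies that $G_z$ is the unique parahoric of $G$ containing $\gamma$; likewise $G_y$ is the unique maximal parahoric of $G$ containing $\gamma'$. Consequently, no $G$-conjugate of $\gamma$ lies in $\stab_G(y)$, and no $G$-conjugate of $\gamma'$ lies in $\stab_G(z)$. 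Applying \cite{adler-lansky:bc-u3-unram}*{Prop.~7.1} to the compactly induced model $\pi = \ind_{\stab_G(x)}^G \sigma$ from \S\ref{sec:depth-zero-sc}, I conclude that $\theta_\pi(\gamma) = 0$ for all $\pi$ of type \depthzerosc{1}, and $\theta_\pi(\gamma') = 0$ for all $\pi$ of types \depthzerosc{2} and \depthzerosc{3}, accounting for the three zero entries of the table.

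For the nonzero entries, the same proposition reduces the calculation to the value of the character of the finite cuspidal $\bar\sigma$ at the image $\bar\gamma$ (respectively $\bar\gamma'$) of our element in the appropriate reductive quotient. In the type \depthzerosc{2} case, $\theta_{\bar\sigma}=-R_{\ufS\conn}^{\ufG_z\conn}\chi$ for a regular character $\chi$ of $\finiteS\conn$, and the Deligne--Lusztig character formula evaluated at the regular elliptic element $\bar\gamma$ collapses to the Weyl sum $-(\chi(\bar\gamma)+\chi(\bar\gamma\inv))$, matching the stated value since $\chi(\bar\gamma\inv)=\chi\inv(\bar\gamma)$. In the type \depthzerosc{1} case, $\finiteG_y\conn \cong \SO(2,1)(k)\cong \PGL(2,k)$ contains the elliptic torus $(\finiteS')\conn$, to which $\bar\gamma'$ belongs; the identification $S' = gSg\inv$ of \S\ref{sec:quadratic_torus} lets us write the character values in terms of $\bar\gamma$, and again the Deligne-Lusztig formula gives the answer $-(\chi(\bar\gamma)+\chi\inv(\bar\gamma))$.

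The single point requiring more care is type \depthzerosc{3}, where $\bar\sigma$ is only one of the two degree-$(q-1)/2$ components $\vartheta',\vartheta''$ of $-R_{\ufS\conn}^{\ufG_z\conn}\phi$ for the order-2 character $\phi$. The Deligne-Lusztig formula gives $-R_{\ufS\conn}^{\ufG_z\conn}\phi(\bar\gamma)=-(\phi(\bar\gamma)+\phi(\bar\gamma\inv))=-2\phi(\bar\gamma)$, so I must show that the two components contribute equally at regular elliptic elements, giving $\vartheta'(\bar\gamma)=\vartheta''(\bar\gamma)=-\phi(\bar\gamma)$. This equidistribution is the main obstacle, but it is well-known from the $\SL(2,k)$ character table (the two half-discrete-series representations are distinguished only at unipotent classes and agree on regular semisimple elliptic classes); alternatively it follows since $\vartheta'$ and $\vartheta''$ are swapped by an outer conjugation of $\finiteG_z\conn$ that fixes regular elements of the elliptic torus pointwise up to the Weyl involution, combined with $\phi=\phi\inv$.

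Assembling these three pieces produces all six entries of the table, completing the proof.
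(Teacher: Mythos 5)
Your proposal is correct and follows essentially the same route as the paper's proof: uniqueness of the parahoric containing $\gamma$ (resp.\ $\gamma'$) plus \cite{adler-lansky:bc-u3-unram}*{Prop.~7.1} to get the vanishing and the reduction to the finite group, then the Deligne--Lusztig character formula, and finally the observation that the two degree-$(q-1)/2$ components of $-R_{\ufS\conn}^{\ufG_z\conn}\phi$ agree on regular semisimple elements. The paper simply cites this last fact without your extra justification, but your reasoning for it is sound.
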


\begin{proof}
  By the discussion
  in~\S\ref{sec:depth-zero-sc}, $\pi$ is compactly induced from a
  representation $\sigma$ of $G_v$, where $v=y$ or $z$.  Let
  $\gamma^*$ be either $\gamma$ or $\gamma'$ and, correspondingly, let
  $x$ be either $z$ or $y$.  Since $\gamma^*$ lies in $G_x$ and no
  other parahoric subgroup,
  \cite{adler-lansky:bc-u3-unram}*{Prop.~7.1} implies
  that $\theta_\pi(\gamma^*) = \theta_\sigma(\gamma^*)$ if $v=x$ and
  $\theta_\pi(\gamma^*)=0$ otherwise.  In the former case
  $\theta_\sigma(\gamma^*) = \theta_{\bar\sigma}(\bar\gamma^*)$, where
  $\bar\gamma^*$ is the image of $\gamma^*$ in $\finiteG_x\conn $.
  In cases~\depthzerosc{1} and~\depthzerosc{2}, the character of
  $\bar\sigma$ is (up to a sign) the Deligne-Lusztig character of
  $\finiteG_x^\circ$ corresponding to $\chi$, and the result follows
  from the the character 
  formula for such
  representations~\cite{srinivasan:finite-book}*{Thm.~6.8}.  In
  case~\depthzerosc{3}, $\bar\sigma$ is one of the two irreducible
  components of the Deligne-Lusztig representation of
  $\finiteG_z^\circ$ corresponding to $\phi$, and the result follows
  from this character formula together with the fact that these two
  components take the same value on semisimple elements of $\finiteG_z^\circ$.
\end{proof}

\begin{lem}
\label{lem:sc-packet-size}
Let $\rho$ be a depth-zero supercuspidal representation of $H$ such
that $\{\rho\}$ is an $L$-packet.  Then the transfer $\Pi$ of
$\{\rho\}$ via $\Xi_G$  is a depth-zero supercuspidal $L$-packet of $G$ of
size two.  Moreover, every depth-zero supercuspidal
$L$-packet $\Pi$ arises in this way.
\end{lem}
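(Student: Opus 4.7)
I would combine Rogawski's endoscopic framework with the character values of Lemma~\ref{lem:char_values} on the two ramified quadratic tori of $G$, and conclude with a count.

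First, by Hypothesis~\ref{hyp:depth}, $\Pi$ has depth zero, and by Rogawski's theory (\cite{rogawski:u3}*{\S13}), $\Pi$ consists of discrete-series representations because $\rho$ is supercuspidal. The depth-zero discrete series of $G$ that are not supercuspidal are only the Steinberg twists $\St_G\cdot\psi$ (case \pslpacket{3}) and the representations $\pi^2(\lambda)$ (case \pslpacket{5}); by Rogawski's description of the sources under $\Xi_G$, the $L$-packets of $G$ containing these arise only from non-singleton (principal-series) $L$-packets of $H$, so the singleton supercuspidal $\{\rho\}$ cannot transfer to such a packet. Hence every element of $\Pi$ is a depth-zero supercuspidal.

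To pin down $|\Pi|=2$, I would invoke the endoscopic character identity, which expresses a stable combination of the characters on $\Pi$ as the transfer of $\theta_\rho$. Pick a regular element $\gamma\in S$ and its stable conjugate $\gamma'\in S'$ as in~\S\ref{sub:sc}; both match the same regular element of $S^0\subset H^0$, so $\theta_\rho$ being nonzero there forces the transferred combination to be nonzero at both $\gamma$ and $\gamma'$. By Lemma~\ref{lem:char_values}, each depth-zero supercuspidal of $G$ vanishes at one of $\gamma,\gamma'$, so $\Pi$ must contain at least one representation of type \depthzerosc{1} (nonzero at $\gamma'$) and at least one of type \depthzerosc{2} or \depthzerosc{3} (nonzero at $\gamma$). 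Rogawski's bound on endoscopic packet sizes via $\D_G(\uS/F)\cong\Z/2$ gives $|\Pi|\leq 2$, hence $|\Pi|=2$. A further character match using parts~(\ref{item:finite-torus-chi-psi-mu}) and~(\ref{item:finite-torus-chichiinvpsi}) of Lemma~\ref{lem:finite_torus} then forces the second member of $\Pi$ to be of type \depthzerosc{2} rather than \depthzerosc{3}: the character $\phi$ of $\finiteS\conn$ underlying a type-\depthzerosc{3} representation has order two, while the character $\chi$ attached to $\rho$ is regular.

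For exhaustion, I would count. The type-\depthzerosc{3} representations are precisely the $\pi^s(\lambda)$ of Proposition~\ref{prop:ps_L-packets}(\pslpacket{5}), which lie in $L$-packets containing the non-supercuspidal $\pi^2(\lambda)$ and hence do not constitute supercuspidal $L$-packets. The remaining depth-zero supercuspidal representations of $G$ comprise $q-1$ of type \depthzerosc{1} and $q-1$ of type \depthzerosc{2}; by the pairing established above they group into $q-1$ size-two $L$-packets, matching bijectively with the $q-1$ singleton supercuspidal depth-zero $L$-packets of $H$ from~\S\ref{sec:L-packets_u11} via $\Xi_G$. The main obstacle will be the character-matching step that rules out type \depthzerosc{3} as the second member of $\Pi$, for which the precise statements of Lemma~\ref{lem:finite_torus} are essential.
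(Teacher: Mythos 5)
Your overall strategy (endoscopic transfer plus the character values of Lemma~\ref{lem:char_values} on the pair $\gamma,\gamma'$) is in the right spirit, but the argument diverges from the paper's proof at the two places where the real work happens, and both divergences leave gaps. First, your bound $|\Pi|\le 2$ is asserted via ``$\D_{\uG}(\uS/F)\cong\Z/2$,'' but nothing you say ties the packet $\Pi$ to the torus $\uS$ rather than to the torus $\uC$ of type \cartan{1}; packets transferred ultimately from $\uC$ have size $4$, so this possibility must be excluded. The paper does exactly this: if the source packet $R$ on $H$ had size $2$, it would come from a $W(C,H)$-regular character of $C$ which, for $\Pi$ to be supercuspidal, would have to be $W(C,G)$-regular; but $\finiteC\cong\{\pm1\}^3$ and $W(C,G)$ permutes the three coordinates transitively, so $C$ has no $W(C,G)$-regular depth-zero characters. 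Hence $|R|=1$ and $|\Pi|=2|R|=2$. Relatedly, the paper excludes singleton supercuspidal $L$-packets by observing that Lemma~\ref{lem:char_values} shows no depth-zero supercuspidal character of $G$ is stable (each vanishes at exactly one of $\gamma,\gamma'$, which are stably conjugate), whereas the sum of characters over a tempered $L$-packet must be stable. You use the same character values but only to identify the types of the two members --- which is really the content of Proposition~\ref{prop:sc_card2}, not of this lemma --- and never rule out $|\Pi|=1$ directly.

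Second, your exhaustion-by-counting argument is circular as written. You take as input that the type-\depthzerosc{3} representations are precisely the $\pi^s(\lambda)$ and hence never lie in supercuspidal $L$-packets; but in the paper this identification is a \emph{consequence} of the present lemma together with Proposition~\ref{prop:sc_card2} (see the opening of \S\ref{sub:non-sc}), so it is not available here. You also do not justify that every representation of type \depthzerosc{1} or \depthzerosc{2} actually lies in a supercuspidal $L$-packet (a priori some could sit in packets with non-supercuspidal members, as the \depthzerosc{3} ones do), so the claimed bijection between the $q-1$ pairs and the $q-1$ singleton packets of $H$ is not established. The paper avoids counting altogether: no singletons exist, every non-singleton supercuspidal packet is a transfer via $\Xi_G$ from $H$ by Rogawski's Lemmas 13.1.1 and 13.1.3 together with Hypothesis~\ref{hyp:depth}, and the source must be a singleton supercuspidal packet of $H$ by the $\finiteC$ argument above. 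Finally, for the first assertion of the lemma the paper simply cites \cite{rogawski:u3}*{\S12.2}; your alternative justification that non-supercuspidal discrete-series packets ``arise only from non-singleton (principal-series) $L$-packets of $H$'' is incorrect as stated, since the packets $\{\pi^2(\lambda),\pi^s(\lambda)\}$ arise from singleton one-dimensional packets of $H$ (cf.\ the character $\xi$ in \eqref{eq:char_identity}).
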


\begin{proof}
The first statement follows from~\cite{rogawski:u3}*{\S12.2} given our
assumption on $\Xi_G$ in Hypothesis \ref{hyp:depth}.

Now suppose that $\Pi$ is a depth-zero supercuspidal $L$-packet of
$G$.  An immediate consequence of Lemma~\ref{lem:char_values}
is that none of the depth-zero
supercuspidal characters are stable.
Since the sum of the characters of the elements of a tempered
$L$-packet is stable (see~\cite{rogawski:u3}*{\S12--13}), it follows
that $G$ has no
depth-zero supercuspidal $L$-packets of size $1$.
According
to~\cite{rogawski:u3}*{\S12.2, Lemmas~13.1.1 and 13.1.3}
and Hypothesis \ref{hyp:depth},
a non-singleton
depth-zero supercuspidal $L$-packet $\Pi$ must be the transfer via
$\Xi_G$ of a
depth-zero supercuspidal $L$-packet $R$ of the endoscopic group $H$.
From~\cite{rogawski:u3}*{\S12.1},
we have that
the $L$-packet $R$ 
has size $1$ or $2$.
But if $|R|$ were
$2$, then $R$ would be the transfer via $\Xi_H$ of a (depth-zero)
$W(C,H)$-regular
character $\psi$ of $C$.
Moreover, since $\Pi$ is supercuspidal $\psi$ would have to be
$W(C,G)$-regular.  But since $W(C,G)$ acts on $C = E^1\times E^1\times
E^1$ by permuting coordinates
transitively, and since since $\finiteC \cong \{\pm 1\}^3$, $C$ has no
$W(C,G)$-regular characters of depth-zero.  Hence $|R|$
must be $1$ and
$|\Pi|=2\,|R|=2$ by~\cite{rogawski:u3}*{Lemma 13.1.2}.
\end{proof}

Using the results of \S\ref{sec:L-packets_u11}
and~\cite{rogawski:u3}*{\S12.1}, we see that the
singleton supercuspidal depth-zero $L$-packets of $H$ are those of the form
$\{\rho\}$, where $\rho = \rho^0\otimes\psi$, $\rho^0$ is a
supercuspidal of $H^0$ of type~\Hdepthzerosc{1}, and $\psi$ is a
depth-zero character of $Z$.

We now determine the depth-zero supercuspidal $L$-packets of $G$.
In the following, we will view the restriction of $\Omega$
to $E^1$ as a character of $Z$; hence $\bar\Omega$ is a character
of $\finiteZ$.
Also, for any character $\nu$ of $\finiteZ$,
$\widehat\nu$ will denote the inflation of $\nu$ to $Z$.
Now fix a characer $\nu$ of $\finiteZ$, and let $\chi$ be a character
of $\finiteS^0$ of order greater than $2$.
Note that $\finiteS^\circ = \finiteS^0$, so we may view $\chi$ as a
character of $\finiteS^\circ$.  By identifying $\finiteS$ with
$\finiteS'$ via conjugation by the element $g\in \uG(E)$ from the
beginning of this section, we may also view
$\chi$ as a character of $(\finiteS')^\circ$.
(This character of $(\finiteS')^\circ$ depends on the choice of $g$,
but its Weyl group orbit is uniquely determined by $\chi$.  The
following representations constructed from $\chi$ depend only on this
orbit.)  We fix an identification of $\finiteS^0$ with $l^1$, where $l/k$ is a
quadratic extension.  This determines identifications
of $\finiteS^\circ$ and $(\finiteS')^\circ$ with $l^1$.

If $x=y$ (resp.~if $x=z$), let $\pi_x= \pi_x(\chi,\nu)$ be the supercuspidal
representation of $G$ of the form~\depthzerosc{1}
(resp.~\depthzerosc{2}) associated to $\chi$ and $\nu$.
That is,
$\pi_x =
\ind_{\stab_G(x)}^{G}
        \infl(\bar\sigma_x\otimes\nu)$,
where $\bar\sigma_x$ is the cuspidal Deligne-Lusztig representation of
$\finiteG_x^\circ$ obtained from $\chi$.
Let $\rho = \rho(\chi,\nu) = \ind_{\stab_H(z)}^H\tau$, where $\tau$ is
defined as follows.
Let $\bar\tau$ be the Deligne-Lusztig
cuspidal representation of $\finiteH_z\conn $ associated to $\chi\phi$
($\phi$ the character of $\finiteS^0$ of order $2$), and
define $\tau$ to be the inflation to $\stab_H(z)$ of the representation
   $\bar\tau\otimes\nu\bar\Omega$ of $\finiteH_z = \finiteH_z\conn
\times\finiteZ$.

\begin{prop}
\label{prop:sc_card2}
The singleton $\{\rho(\chi,\nu)\}$
is a depth-zero supercuspidal $L$-packet of $H$.
The image of $\{\rho(\chi,\nu)\}$ under $\Xi_G$ is the
depth-zero supercuspidal $L$-packet
$\{\pi_y(\chi,\nu),\pi_z(\chi,\nu)\}$ of $G$.
Moreover, every depth-zero supercuspidal $L$-packet of $G$ is of this form.
\end{prop}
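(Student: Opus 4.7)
The plan is to verify the three claims of Proposition~\ref{prop:sc_card2} in turn, reducing the classification to character matching via Lemma~\ref{lem:sc-packet-size}. For the first claim, since $\chi$ has order greater than $2$ we have $(\chi\phi)^2 = \chi^2 \neq 1$, so $\chi\phi$ is $W_k(\ufS^0,\ufH_z^0)$-regular; hence $\bar\tau$ is an irreducible cuspidal representation of $\finiteH_z^0 \cong \SL(2,k)$ of degree $q-1$, placing $\rho(\chi,\nu)$ in type \Hdepthzerosc{1}. By the analysis in \S\ref{sec:L-packets_u11}, such representations form singleton $L$-packets.

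For the second claim, set $\Pi = \Xi_G(\{\rho(\chi,\nu)\})$. By Lemma~\ref{lem:sc-packet-size}, $\Pi$ is a depth-zero supercuspidal $L$-packet of $G$ of size two, so its two constituents are among the types \depthzerosc{1}, \depthzerosc{2}, \depthzerosc{3}. I would identify them by evaluating the stable character sum at the very regular elements $\gamma \in S$ and $\gamma' \in S'$ from \S\ref{sec:quadratic_torus}, and comparing with $\theta_\rho(\gamma)$ via the endoscopic character identity defining $\Xi_G$. Lemma~\ref{lem:char_values} shows that at $\gamma$ only types \depthzerosc{2} and \depthzerosc{3} can contribute, while at $\gamma'$ only type \depthzerosc{1} can. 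The Deligne-Lusztig character formula for $\bar\tau$ expresses $\theta_\rho(\gamma)$ in terms of $\chi\phi + (\chi\phi)^{-1}$, and after accounting for the transfer factor (which carries a twist by $\phi$) and the central twist by $\bar\Omega$, the contribution at $\gamma$ must equal $\theta_{\pi_z(\chi,\nu)}(\gamma)$; Lemma~\ref{lem:finite_torus}(\ref{item:finite-torus-chi-psi}) then pins down $\chi$ up to inversion, while Lemma~\ref{lem:finite_torus}(\ref{item:finite-torus-chichiinvpsi}) rules out a type~\depthzerosc{3} contribution. An analogous computation at $\gamma'$ identifies the second element of $\Pi$ as $\pi_y(\chi,\nu)$, and the central character relation for $\Xi_G$ fixes the correct $\nu$.

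For the third claim, Lemma~\ref{lem:sc-packet-size} reduces matters to showing that every singleton depth-zero supercuspidal $L$-packet of $H$ has the form $\{\rho(\chi,\nu)\}$. As recalled just before the proposition, such a packet is $\{\rho^0 \otimes \widehat\psi\}$ for some type~\Hdepthzerosc{1} representation $\rho^0$ of $H^0$ (arising from a regular character $\eta$ of $\finiteS^0$ of order greater than $2$) and some depth-zero character $\psi$ of $\finiteZ$. Setting $\chi = \eta\phi$ (which has order greater than $2$ because $\eta^2 \neq 1$) and $\nu = \psi\bar\Omega\inv$ then recovers $\rho = \rho(\chi,\nu)$.

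The main obstacle lies in Step~2, specifically in tracking the transfer factors, central character identities, and sign conventions in the endoscopic character identity, distributed across the two stable conjugacy classes of ramified quadratic tori in $\uG$ (only one of which lies in $\uH$). The key observation that makes the matching possible is that each of $\pi_y(\chi,\nu)$ and $\pi_z(\chi,\nu)$ vanishes at exactly one of $\gamma$ and $\gamma'$ by Lemma~\ref{lem:char_values}, so evaluating at both elements is necessary to pin down both constituents of $\Pi$; the finite-torus analysis in Lemma~\ref{lem:finite_torus} is then what excludes spurious alternatives involving type~\depthzerosc{3} representations or the wrong cuspidal parameter.
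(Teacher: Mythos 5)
Your plan follows the paper's proof essentially step for step: Lemma~\ref{lem:sc-packet-size} reduces everything to identifying the two members of $\Xi_G(\{\rho\})$, the central-character relation $\omega_{\pi_i}=\Omega\,\omega_\rho$ fixes $\nu$, and the endoscopic identity $\theta_{\pi_1}-\theta_{\pi_2}=\theta_\rho^G$ evaluated at $\gamma$ and $\gamma'$, combined with Lemmas~\ref{lem:char_values} and~\ref{lem:finite_torus}, pins down the two constituents and excludes type~\depthzerosc{3}, with exhaustiveness again coming from Lemma~\ref{lem:sc-packet-size}. The one divergence is at the final matching step: the paper treats the transfer factor $n(\gamma)=\Omega(-N_{EE'/E}(\gamma^{-1}-1))$ only as a $\{\pm 1\}$-valued function assuming both values and invokes Lemma~\ref{lem:finite_torus}(\ref{item:finite-torus-chi-psi-mu}) to force the quadratic twist by $\phi$, whereas you propose to identify the transfer factor with $\phi$ outright and then apply part~(\ref{item:finite-torus-chi-psi}); your route is also valid, but it requires the explicit computation (not needed in the paper's version) that $\bar\Omega(-N_{l/k}(\bar\gamma-1))=\pm\phi(\bar\gamma)$ for all regular $\bar\gamma$.
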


\begin{proof}
By Lemma~\ref{lem:sc-packet-size} and the subsequent discussion,
$\{\rho\}$ is an
$L$-packet of $H$ whose transfer via $\Xi_G$ is a supercuspidal $L$-packet
$\Pi = \{\pi_1,\pi_2\}$ of size $2$, and all such $L$-packets arise in
this way.  Therefore, to prove the proposition, it suffices to show
that $\Pi = \{\pi_y,\pi_z\}$.

From Hypothesis \ref{hyp:depth}, $\Pi$ has depth zero.
As discussed in~\S\ref{sec:depth-zero-G}, $\pi_i$ ($i=1,2$) is
associated to a cuspidal representation $\bar\sigma_i$ of
$\finiteG_{x_i}\conn$ ($x_i=y$ or $z$) and a character $\nu_i$ of
$\finiteZ$.  To determine
$\nu_i$, note that the central character $\omega_{\pi_i}$ of $\pi_i$ is equal
to $\widehat\nu_i$, while $\omega_\rho = \widehat\nu\Omega$.
By~\cite{rogawski:u3}*{Prop.~4.9.1},
$\omega_{\pi_i}$ must satisfy $\omega_{\pi_i} = \Omega\omega_\rho
= \Omega\widehat\nu\Omega = \widehat\nu$.
Thus $\nu_i=\nu$. 

We now determine the $\bar\sigma_i$.
By~\cite{rogawski:u3}*{Lemma 12.7.2}, up to
re-ordering, 
$$
\theta_\rho^G = \theta_{\pi_1}-\theta_{\pi_2},
$$
where
$\theta_\rho^G$ is the distribution on $G$ coming from $\theta_\rho$
via the transfer $\Xi_G$.
Let $\gamma$ be an element of $S^0$ whose
image $\bar\gamma$ in $\finiteG_z$ is a regular element of
$\finiteS^0$.
Let $\gamma' = g\gamma g\inv$
be as in Lemma~\ref{lem:char_values},
and let $\delta$ be the element of 
$\D_\uG(\underline{S}/F)$ represented by the cocycle
$s\mapsto s(g)g\inv$, $s\in\Gal (E/F)$.
Let $\chi_0 = \chi\phi$.
A result analogous to Lemma \ref{lem:char_values} shows that
$\theta_\rho(\gamma) = - \chi_0\vphinv(\bar\gamma) - \chi_0\inv(\bar\gamma)$.
Moreover, due to the
particular form of $\gamma$ and the fact
(see~\cite{rogawski:u3}*{Prop.~3.7.1})
that
$W_F(\underline{S},\uH) = W_F(\underline{S},\uG)$, it follows
from~\cite{rogawski:u3}*{\S4.9, Lemma 12.5.1} that
$$
\theta_\rho^G(\gamma) = n(\gamma)\theta_\rho(\gamma),
\qquad
\theta_\rho^G(\gamma') = n(\gamma)\kappa(\delta)\theta_\rho(\gamma),
$$
where $\kappa $ is the element of the dual of
$\D_\uG(\underline{S}/F)$ corresponding to the endoscopic group $\uH$,
and 
$$
n(\gamma) = \Omega (-N_{EE'/E}(\gamma^{-1}-1)) =
\bar\Omega(-N_{l/k}(\bar\gamma^{-1}-1)) = \bar\Omega(-N_{l/k}(\bar\gamma-1)).
$$
(In this formula, we are identifying
$\gamma\in S^0$ (resp.~$\bar\gamma\in\finiteS^0$) with the
element of $\ker (N_{EE'/E})$ (resp.~$l^1$) to which it corresponds.)

Note that $n(\gamma)\in\{\pm 1\}$.  Moreover, as $\gamma$ ranges over
all elements of $S^0$ with regular image in $\finiteG_z$, $\bar\gamma$
ranges over all of $l^1\smallsetminus\{\pm 1\}$, and an elementary
argument shows that $n(\gamma) = \bar\Omega(-N_{l/k}(\bar\gamma-1))$
must assume both the values $1$ and $-1$ if $q>5$.
We also note that $\delta$ is not in the image of
$\D_{\uH}(\underline{S},F)$, and consequently, we have
$\kappa(\delta)=-1$
by~\cite{rogawski:u3}*{\S4.10, Lemma~3.13}.
Thus we have
\begin{equation}
\label{eq:endo_lift_reg}
\begin{aligned}
\theta_{\pi_1}(\gamma\phprime) - \theta_{\pi_2}(\gamma\phprime) 
= \theta_\rho^G(\gamma\phprime) &
= \phm n(\gamma)\theta_\rho(\gamma)
= - n(\gamma)\left(\chi_0(\bar\gamma) + \chi_0\inv(\bar\gamma)\right) \,,
\\
\theta_{\pi_1}(\gamma') - \theta_{\pi_2}(\gamma')
= \theta_\rho^G(\gamma') &
= -n(\gamma)\theta_\rho(\gamma)
= \phm n(\gamma)\left(\chi_0(\bar\gamma) + \chi_0\inv(\bar\gamma)\right) \, .
\end{aligned}
\end{equation}

Assume that both $\pi_i$ are of the form \depthzerosc{1}.
Then it follows from Lemma~\ref{lem:char_values}
that
$\theta_{\pi_i}(\gamma)=0$.
By Lemma~\ref{lem:finite_torus}(\ref{item:finite-torus-chichiinv}),
this contradicts
\eqref{eq:endo_lift_reg} if $q>3$.
A similar argument with $\gamma'$
replacing $\gamma$ shows that
at least one of the representations $\pi_i$
must be of the form \depthzerosc{1}.
Hence precisely one of the $\pi_i$, say $\pi_j$, is of this form, and
the the other, say $\pi_{j'}$, is of the form
\depthzerosc{2} or \depthzerosc{3}.  

We then have $x_j=y$, and as above,
$\bar\sigma_j$ is associated to
a character $\chi_j$ of
$\finiteS^0\cong(\finiteS')^\circ$ of order greater
than $2$.
By Lemma~\ref{lem:char_values},
$\theta_{\pi_{j'}}$ vanishes at $\gamma'$ and by
\eqref{eq:endo_lift_reg}
we have
$$
\pm\left(\chi_j(\bar\gamma)-\chi_j\inv(\bar\gamma)\right)
= \theta_{\pi_1}(\gamma') - \theta_{\pi_2}(\gamma')
= n(\gamma)\left(\chi_0(\bar\gamma) + \chi_0\inv(\bar\gamma)\right).
$$
By Lemma~\ref{lem:finite_torus}(\ref{item:finite-torus-chi-psi-mu}),
this implies that $\chi_j = \chi_0^{\pm 1}\phi = \chi^{\pm1}$ if $q>7$.  Thus
$\bar\sigma_j\cong\bar\sigma_y$.
Since $\nu_j = \nu$, we have $\pi_j\cong\pi_y$.

Since $x_j=y$, it must be the case that $x_{j'} = z$.  
An argument similar to that in the preceding paragraph using
both parts (\ref{item:finite-torus-chi-psi-mu}) and
(\ref{item:finite-torus-chichiinvpsi}) of 
Lemma~\ref{lem:finite_torus}
shows that $\pi_{j'}$ must be of the form~\depthzerosc{2}, and that 
if $\bar\sigma_{j'}$ is associated to the character $\chi_{j'}$ of
$\finiteS^0 = \finiteS\conn$, then
$\chi_{j'}=\chi^{\pm1}$.  It follows that
$\bar\sigma_{j'}\cong\bar\sigma_z$, and hence that $\pi_{j'}\cong\pi_z$.
\end{proof}

\subsection{Non-supercuspidal $L$-packets containing supercuspidals}
\label{sub:non-sc}
It remains to determine the depth-zero
$L$-packets of $G$ of the form $\{\pi^s(\lambda),\pi^2(\lambda)\}$,
i.e., the non-supercuspidal depth-zero $L$-packets containing a
supercuspidal representation.  In this case, $\lambda$
is a depth-zero character of $M$ such that the restriction of
the character $\lambda_1$ of $E\mult$ to $F\mult$ is
$\omega_{E/F}|\cdot|^{\pm 1}_F$ (see~\reducible{2} and
\pslpacket{5}).  In other words, $\lambda_1 =
\eta|\cdot|^{\pm 1/2}_E$ for some character $\eta$ of $E\mult$
satisfying $\eta|_{F\mult} = \omega_{E/F}$.  Recall from
Proposition~\ref{prop:ps_L-packets} that the discrete series
representation $\pi^2(\lambda)$ is a constituent of the principal series
representation $\pi = \ind_B^G \lambda$,
the only other constituent being a non-tempered
representation, denoted $\pi^n(\lambda)$, which is paired with
$\pi^s(\lambda)$ in an $A$-packet.

The representation $\pi^s(\lambda)$ must be of the
form~\depthzerosc{3}, as those of the form~\depthzerosc{1}
and~\depthzerosc{2} are precisely the representations that lie in
depth-zero supercuspidal $L$-packets by Proposition~\ref{prop:sc_card2}.
There are four equivalence classes of representations of the
form~\depthzerosc{3} since there are two equivalence classes of
cuspidal representations of $\finiteG_z\conn $ of degree $(q-1)/2$,
and two characters of $\finiteZ\cong\{\pm 1\}$.  This agrees with our
determination in~\S\ref{sec:depth-zero-G} that there are four different
representations of the form $\pi^2(\lambda)$ (resp.~$\pi^n(\lambda)$).

In the following proposition, we determine $\pi^s(\lambda)$ in
terms of the representation $\pi^n(\lambda)$.

\begin{prop}
\label{prop:A-packet}
  Let $\lambda $ be a depth-zero character of $M$ such that
  $\lambda_1|_{F\mult} = \omega_{E/F} \, |\cdot |_F^{\pm 1}$.
  \begin{enumerate}[(i)]
\item
As a representation of $\finiteG_z\conn $,
$\pi^n(\lambda)^{G_{z+}}$ is equivalent to an irreducible
    component $\vartheta$ of the reducible principal series
    representation $\ind_{\finiteB_z}^{\finiteG_z}\chi$, where $\chi$
    is the character of $\finiteM\conn$ of order $2$.
\item
The representation $\pi^s(\lambda)$ is supercuspidal of the
    form~\depthzerosc{3}.
In particular,
$\pi^s(\lambda) \cong \ind_{\stab_G(z)}^G \infl(\bar\sigma \otimes \nu)$,
where $\nu$ is the character $\bar\lambda_1|_{\{\pm 1\}}\bar\lambda_2$
of $\finiteZ$, and
$\bar\sigma$ is the unique irreducible cuspidal representation of
    $\finiteG_z\conn $ whose character agrees 
with $-\theta_{\vartheta}$ on the set of nontrivial unipotent
    elements in $\finiteG_z\conn $.
  \end{enumerate}
\end{prop}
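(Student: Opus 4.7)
The plan is to reduce both parts to character-theoretic computations on the finite quotient $\finiteG_z\conn \cong \SL(2,k)$, exploiting that all representations involved have depth zero. For part~(i), I would restrict $\pi := \ind_B^G\lambda$ to the parahoric $G_z$ and pass to $G_{z+}$-invariants. A Mackey/Iwasawa argument identifies $\pi^{G_{z+}}$, as a $\finiteG_z\conn$-module, with $\ind_{\finiteB_z}^{\finiteG_z\conn}\bar\lambda'$, where $\bar\lambda'$ is the character of $\finiteM\conn$ obtained by reducing $\lambda|_{M_0}$ modulo $M_{0+}$. The assumption $\lambda_1|_{F\mult} = \omega_{E/F}|\cdot|_F^{\pm 1}$ (equivalently $\lambda_1 = \eta|\cdot|_E^{\pm 1/2}$ with $\eta|_{F\mult} = \omega_{E/F}$) forces $\bar\lambda'$ to be the unique nontrivial quadratic character $\chi$ of $\finiteM\conn$. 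Since $\ind_{\finiteB_z}^{\finiteG_z\conn}\chi = \vartheta \oplus \vartheta'$ and both constituents $\pi^2(\lambda)$ and $\pi^n(\lambda)$ have depth zero and therefore nonzero $G_{z+}$-invariants, each must account for one of the two $(q+1)/2$-dimensional summands; in particular $\pi^n(\lambda)^{G_{z+}}$ is one of them.

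For part~(ii), I would first classify $\pi^s(\lambda)$ among the three types of \S\ref{sec:depth-zero-G}. By Proposition~\ref{prop:ps_L-packets}\pslpacket{5} and Hypothesis~\ref{hyp:depth}, it is a depth-zero supercuspidal. Types \depthzerosc{1} and \depthzerosc{2} are ruled out by Proposition~\ref{prop:sc_card2}, which places every such representation inside a size-two supercuspidal $L$-packet, whereas $\pi^s(\lambda)$ is paired in an $L$-packet with the non-supercuspidal $\pi^2(\lambda)$. So $\pi^s(\lambda)$ is of type \depthzerosc{3}. The central character $\nu$ is determined by $A$-packet matching with $\pi^n(\lambda)$: evaluating \eqref{eqn:lambda} at central scalars $zI$ with $z \in E^1$ yields $\omega_{\pi^n(\lambda)}(zI) = \lambda_1(z)\lambda_2(z^3) = \lambda_1(z)\lambda_2(z)$, which reduces modulo $Z_{0+}$ to $\bar\lambda_1|_{\{\pm 1\}}\bar\lambda_2 = \nu$.

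To pin down $\bar\sigma$, I would combine two character identities on the regular set: the $L$-packet sum $\theta_{\pi^2(\lambda)} + \theta_{\pi^s(\lambda)}$ is stable (by Rogawski), and the principal-series character $\theta_\pi = \theta_{\pi^2(\lambda)} + \theta_{\pi^n(\lambda)}$ is stable. Subtracting gives that $\theta_{\pi^s(\lambda)} - \theta_{\pi^n(\lambda)}$ is stable. Specializing this identity to the preimage in $G_z$ of nontrivial unipotent elements of $\finiteG_z\conn$ and applying the depth-zero character formula~\cite{adler-lansky:bc-u3-unram}*{Prop.~7.1} together with part~(i), both sides descend to $\finiteG_z\conn$-character values: the left to $\theta_{\bar\sigma}$ (the $\nu$-twist being trivial on unipotents), the right to $\theta_\vartheta$. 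Since the two nontrivial unipotent $k$-classes of $\SL(2,k)$ merge over $\overline{k}$, stability forces $\theta_{\bar\sigma} + \theta_\vartheta = 0$ on these classes, which determines $\bar\sigma$ uniquely among the two degree-$(q-1)/2$ cuspidal representations, since these differ precisely on unipotent elements.

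The main obstacle is this final step, because both $\{\vartheta, \vartheta'\}$ and the two cuspidal components of $-R_{\ufS\conn}^{\ufG_z\conn}\phi$ agree on the semisimple set and differ only on unipotent elements; aligning the correct pair via the stable-conjugacy pairing of unipotent orbits in $G_z$ with those in $\finiteG_z\conn$, and compatibly across the endoscopic identifications, requires care. This alignment will rest on the explicit identification of $\underline{S}$ with $\underline{S}'$ afforded by Proposition~\ref{prop:quadratic_tori}.
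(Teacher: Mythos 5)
Your part (i), the identification of $\pi^s(\lambda)$ as a representation of type~\depthzerosc{3}, and the computation of $\nu$ all run along the same lines as the paper. The genuine gap is in the final step, where you try to pin down $\bar\sigma$ by stability. The paper does not use the stable combination $\theta_{\pi^s}-\theta_{\pi^n}$; it uses the endoscopic identity $\theta_{\pi^s}=\theta_\xi^G-\theta_{\pi^n}$ evaluated at a regular element $\gamma$ of the ramified \emph{cubic} torus $T_0\subset G_\F$ whose image $\bar\gamma$ in $\finiteG_z\conn$ is a nontrivial unipotent element. The crucial input is that $\theta_\xi^G(\gamma)=0$ because the stable class of $\gamma$ does not meet $H$ ($\uH$ has no maximal tori of type~\cartan{3}); this is what produces the minus sign $\theta_{\pi^s}(\gamma)=-\theta_{\pi^n}(\gamma)$. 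That is a statement about the \emph{unstable} combination $\theta_{\pi^s}+\theta_{\pi^n}=\theta_\xi^G$ vanishing on the cubic torus, and no manipulation of stable distributions can recover it: your stable function $\theta_{\pi^s}-\theta_{\pi^n}$ evaluated at $\gamma$ equals $-2\theta_{\pi^n}(\gamma)$, and stability only asserts that this quantity is constant on the stable class of $\gamma$, which says nothing about the relation between $\theta_{\pi^s}$ and $\theta_{\pi^n}$ at $\gamma$ itself.

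Worse, if your argument did go through it would select the \emph{wrong} cuspidal representation. From the character table of $\SL(2,k)$, the two components of $\ind_{\finiteB_z}^{\finiteG_z\conn}\chi$ and the two cuspidal components of degree $(q-1)/2$ take values of the form $\tfrac12(\pm1\pm\sqrt{\pm q})$ on the two nontrivial unipotent classes $u,u'$; the unique cuspidal $\bar\sigma_0$ for which $\theta_{\bar\sigma_0}-\theta_\vartheta$ is constant on $\{u,u'\}$ satisfies $\theta_{\bar\sigma_0}=\theta_\vartheta-1$ there, whereas the proposition's $\bar\sigma$ is the \emph{other} component, characterized by $\theta_{\bar\sigma}=-\theta_\vartheta$. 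These are different representations, so constancy on the merged classes is not merely insufficient but points the opposite way. There is also a technical problem with your descent: \cite{adler-lansky:bc-u3-unram}*{Prop.~7.1} applies to the compactly induced $\pi^s$, not to the principal-series constituent $\pi^n$, and the relevant $\gamma\in T_0\subset G_\F$ lies in both $G_y$ and $G_z$; one must instead compute $\theta_{\pi^n}(\gamma)$ via the Schneider--Stuhler alternating sum over the facets $\F$, $y$, $z$, in which the contributions from $\F$ and $y$ cancel ($-1+1$) to leave $\theta_{\pi^n}(\gamma)=\theta_\vartheta(\bar\gamma)$. Combining that computation with $\theta_\xi^G(\gamma)=0$ yields $\theta_{\bar\sigma}(\bar\gamma)=\theta_{\pi^s}(\gamma)=-\theta_\vartheta(\bar\gamma)$, which is the step your proposal is missing.
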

Note that, as one would expect, $\pi^s(\lambda)$ is independent of the power
of $|\cdot|_E$ occurring in $\lambda_1$.
\begin{proof}
As explained in the preceding discussion, $\pi^s = \pi^s(\lambda)$
must be supercuspidal of the form~\depthzerosc{3}.
That is,
$\pi^s(\lambda) = \ind_{\stab_G(z)}^G \infl(\bar\sigma\otimes\nu)$ for
some cuspidal representation $\bar\sigma$ of $\finiteG\conn_z$ of
degree $(q-1)/2$ and some character $\nu$ of $\finiteZ$.
Our task is to
determine $\bar\sigma$ and $\nu$ in terms of $\pi^n = \pi^n(\lambda)$.
 
From~\cite{rogawski:u3}*{Thm.~13.1.1, Prop.~13.1.2}, the distribution
characters of $\pi^s$ and $\pi^n$ satisfy
\begin{equation}
\label{eq:char_identity}
\theta_{\pi^s} = \theta_\xi^G - \theta_{\pi^n},
\end{equation}
where $\xi$ is a certain character of $H$ associated to $\lambda$ and
$\theta_\xi^G$ is the transfer of $\xi$ via $\Xi_G$.
The same equation holds for the functions
on $G\reg$ that represent these distributions.
Since $\theta_\xi^G$ transforms via a character under
$Z$~\cite{rogawski:u3}*{Lemma~12.5.1}, it follows that $\pi^s$ and
$\pi^n$ have the same central character.  But $\omega_{\pi^n} =
\lambda_1|_{E^1}\lambda_2$, while $\omega_{\pi^s}$ is the inflation of
$\nu$ to $Z$.  Hence $\nu = \bar{\lambda}_1|_{\{\pm 1\}}\bar{\lambda}_2$.

Let $\gamma\in G_z$ be a regular elliptic element of $G$ whose image
in $\finiteG_z\conn$ is regular.
By~\cite{adler-lansky:bc-u3-unram}*{Lemma 2.1},
the only conjugate of $\stab_G(z)$ that contains $\gamma$
is $\stab_G(z)$ itself.
It follows
from~\cite{adler-lansky:bc-u3-unram}*{Prop.~7.1} that
$\theta_{\pi^s}(\gamma) = \theta_{\bar\sigma}(\bar\gamma)$, where
$\bar\gamma$ is the image of $\gamma$ in $\finiteG_z\conn $.  We now
determine $\bar\sigma$ by evaluating the right-hand side of
(\ref{eq:char_identity}) at appropriate regular elliptic elements of
$G$.

Let $\underline{T}$ be a torus of type~\cartan{3} associated to the
ramified cubic extension $F(\sqrt[3]{\varpi_F})/F$ for some cube root
$\sqrt[3]{\varpi_F}$ of $\varpi_F$ in $\overline F$.  As indicated
in~\S\ref{sec:cubic_torus}, we may choose $\underline{T}$ so that
$T_0\subset G_{\F}$,
where $\F$ is the fundamental alcove in
$\B (\uG ,F)$.  By Lemma~\ref{lem:cubic_torus}, the image of $T_0$ in
$\finiteG_x\conn $ ($x=y$ or
$z$) is the unipotent radical of $\finiteB_x$, whence $T_0$ has
trivial image in $\finiteG_\F$.
Choose $\gamma\in T_0$ with
nontrivial image in $\finiteG_z$.

First note that $\theta_\xi^G (\gamma)$ vanishes since the stable
conjugacy class of $\gamma$ does not meet $H$.  Indeed, $\uH$ has no
maximal $F$-tori of type~\cartan{3}.  We now compute the
value of $\theta_{\pi^n}(\gamma)$, thus determining the value of the
right-hand side of (\ref{eq:char_identity}) at $\gamma$.

According to~\cite{schneider-stuhler:sheaves}*{Lemma~III.4.10,
  Thm.~4.16}, we have that
\begin{equation}
\label{eq:char_pi_n}
\theta_{\pi^n}(\gamma) =
-\tr\left(\gamma\vert(\pi^n)^{G_{\F+}}\right)
+\tr\left(\gamma\vert(\pi^n)^{G_{y+}}\right)
+\tr\left(\gamma\vert(\pi^n)^{G_{z+}}\right).
\end{equation}
We now compute each term of this alternating sum.

Since $\pi^2$ and $\pi^n$ are the constituents of a
principal series representation,
\cite{moy-prasad:k-types} implies that, as representations of
$\finiteG_\F\conn \cong\finiteM\conn $, their spaces of $G_{\F+}$-fixed
vectors are nontrivial sums of one-dimensional characters.  As in the
proof of~\cite{adler-lansky:bc-u3-unram}*{Prop.~4.4}, we can use
Mackey theory and Frobenius reciprocity to see that
$$
\Hom_{G_{\F+}}({\bf 1}, \ind_B^G\lambda )
=\bigoplus_{B\backslash G/G_{\F+}}\C,
$$
so that $|B\backslash G/G_{\F+}|$ is the dimension of the space of
$G_{\F+}$-fixed vectors in $\ind_B^G\lambda$.  Since $y$ is special,
we have that $G=G_yB$.  Therefore, $|B\backslash G/G_{\F+}| =
|(B\cap G_y)\backslash G_y/G_{\F+}| =
|\finiteB_y\backslash\finiteG_y/\finiteB_y|=2$, by the Bruhat
decomposition.  Thus the space of $G_{\F+}$-fixed vectors in
$\ind_B^G\lambda$ is two dimensional,
so $(\pi^2)^{G_{\F+}}$ and
$(\pi^n)^{G_{\F+}}$ are both one dimensional.  In particular, since the
image of $\gamma$ in $\finiteG_{\F}\conn $ is trivial, we have
\begin{equation}
\label{eq:trace_F}
\text{trace}\left(\gamma|(\pi^n)^{G_{\F+}}\right) = 1.
\end{equation}

An argument contained in the proof
of~\cite{adler-lansky:bc-u3-unram}*{Prop.~4.4} shows that the
representation $\pi^{G_{y+}}$ of $G_y/G_{y+} =
\finiteG_y\conn $ is equivalent to
$\ind_{\finiteB_y}^{\finiteG_y\conn }\bar\lambda_1$.  Since
$\bar\lambda_1 = \bar\eta$ is the character of
$\finiteM$ of order $2$, it follows from
\S\ref{sub:reductive-quotients} that the
latter representation decomposes as $\phi\oplus\St_{\finiteG_y\conn }
\cdot\phi$, where $\phi$ is the character of
$\finiteG_y\conn $ of order $2$.  As in the proof
of~\cite{adler-lansky:bc-u3-unram}*{Prop.~4.4}, it follows that
$(\pi^n)^{G_{y+}}\cong\phi$, while
$(\pi^2)^{G_{y+}}\cong\St_{\finiteG_y\conn }\cdot\phi$.  In particular,
since the image $\bar\gamma$ of $\gamma$ in $\finiteG_y$ is unipotent,
\begin{equation}
\label{eq:trace_y}
\tr\left(\gamma|(\pi^n)^{G_{y+}}\right) = \phi(\bar\gamma) = 1.
\end{equation}

As in the case
of $G_y$, the representation $\pi^{G_{z+}}$ of $\finiteG_z\conn $ is
equivalent to $\ind_{\finiteB_z}^{\finiteG_z\conn }\bar\lambda_1$.  Since
$\bar\lambda_1$ has order $2$,
it follows from \S\ref{sub:reductive-quotients} that the latter
representation decomposes as $\vartheta\oplus\vartheta'$, where
$\vartheta$ and $\vartheta'$ are the two inequivalent irreducible
representations of $\finiteG_z\conn $ of degree $(q+1)/2$.
Again, 
$(\pi^n)^{G_{z+}}$ must be equivalent to one of these
representations, say $\vartheta$.  This proves (i).  In particular, we
have
\begin{equation}
\label{eq:trace_z}
\tr\left(\gamma|(\pi^n)^{G_{z+}}\right) = \theta_{\vartheta}(\bar\gamma),
\end{equation}
where $\bar\gamma$ is the image of $\gamma$ in $\finiteG_z\conn $.

Therefore, by \eqref{eq:char_pi_n},
\eqref{eq:trace_F}, \eqref{eq:trace_y}, and \eqref{eq:trace_z}, we
have
$$
\theta_{\pi^n}(\gamma) = -1+1+\theta_{\vartheta}(\bar\gamma) =
\theta_{\vartheta}(\bar\gamma).
$$
Hence \eqref{eq:char_identity} implies that 
$$
\theta_{\bar\sigma}(\bar\gamma) = \theta_{\pi^s}(\gamma) =
 - \theta_{\vartheta}(\bar\gamma).
$$
This completes the proof of (ii).
\end{proof}

Since Proposition~\ref{prop:A-packet} determines the relationship
between $\pi^s$ and $\pi^n$, hence between $\pi^s$ and $\pi^2$, it is
sufficient for the verification of this case of Theorem~\ref{thm:main}
in \S\ref{sec:base_change}.
In particular, we do not need to know precisely
how these representations depend on the character $\lambda$.
For
completeness, we provide such a description anyway in the following
result, which determines $\vartheta$ and hence $\bar\sigma$ explicitly
in terms of the inducing character $\lambda$.  It is enough to compute
the values of $\vartheta$ and $\bar\sigma$ on any nontrivial unipotent
element of $\finiteG_z\conn $, as these values uniquely determine
these representations (see~\cite{fulton-harris:rep-theory}*{\S5.2}).
The proof involves a careful analysis of intertwining operators
on the representations $\pi = \ind_B^G \lambda$ of $G$ and
$\ind_{\finiteB_z}^{\finiteG_z\conn }\bar\eta$ of $\finiteG_z\conn $.

It follows from the discussion
in \S\ref{sub:reductive-quotients} that the group $\finiteG_z\conn $
may be explicitly realized as the group of matrices of the form
$$
\begin{pmatrix}
        a & 0 & b \\
        0 & 1 & 0  \\
        c & 0 & d
\end{pmatrix}
\in \widetilde{\finiteG}_z\conn  = \GL(2,k)\times\GL(1,k)
$$
such that 
$\det\smattwo{a}{b}{c}{d} =1$.
Moreover, $\finiteB_z$ is identified
with the group of lower-triangular matrices in $\finiteG_z\conn $ in
this realization.  Choose an element $\gamma$ of the cubic torus $T$
from the proof of the preceding proposition such that  
$\bar\gamma = 
\left(  \begin{smallmatrix}
        1 & 0 & 0 \\
        0 & 1 & 0  \\
        1 & 0 & 1
        \end{smallmatrix}
        \right)
\in\finiteB_z\subset\finiteG_z\conn$.

\begin{prop}
\label{prop:A-packet_explicit}
Let $\lambda $ be a depth-zero character of $M$ such that
$\lambda_1 = \eta |\cdot |_E^{\pm 1/2}$,
where $\eta|_{F\mult} = \omega_{E/F}$.
Then the representation $\vartheta = \pi^n(\lambda)^{G_{z+}}$ of
$\finiteG_z\conn $ satisfies $\theta_{\vartheta} (\bar\gamma) = 
(1-\eta(2\varpi_E)q^{1/2})/2$.
Thus the cuspidal representation $\bar\sigma$
of $\finiteG_z\conn $ from Proposition~\ref{prop:A-packet}(ii)
satisfies $\theta_{\bar\sigma} (\bar\gamma) = 
(-1+\eta(2\varpi_E)q^{1/2})/2$.
\end{prop}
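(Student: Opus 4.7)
My plan is to identify which of the two irreducible constituents of the reducible finite principal series $\ind_{\finiteB_z}^{\finiteG_z\conn}\bar\eta=\vartheta\oplus\vartheta'$ is realized as $\pi^n(\lambda)^{G_{z+}}$, and then to evaluate its character on the regular unipotent element $\bar\gamma$ by invoking the character theory of $\SL(2,k)$.

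The first step is to compute the sum $\theta_\vartheta(\bar\gamma)+\theta_{\vartheta'}(\bar\gamma)$. By the Frobenius character formula applied to $\ind_{\finiteB_z}^{\finiteG_z\conn}\bar\eta$, together with the fact that a regular unipotent element of $\SL(2,k)$ has exactly one fixed point on $\finiteG_z\conn/\finiteB_z\cong\mathbb{P}^1(k)$, this sum equals $\bar\eta(1)=1$. The classical character table of $\SL(2,k)$ (for odd $q$) then tells me that each of the two degree-$(q+1)/2$ constituents of $\ind_{\finiteB_z}^{\finiteG_z\conn}\bar\eta$ takes a value of the form $\tfrac{1}{2}(1\pm q^{1/2})$ on $\bar\gamma$, with the sign of $q^{1/2}$ depending on which constituent is chosen and on the residue of $q$ modulo $4$. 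Hence the problem reduces to determining the correct sign in terms of $\eta$.

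To pin this down, I would analyze the standard intertwining operator $A(\lambda)\colon\ind_B^G\lambda\to\ind_B^G\lambda^w$ associated to the nontrivial element $w$ of the Weyl group of $M$ in $G$. This operator preserves the finite-dimensional subspace $\pi^{G_{z+}}$, on which, via the Moy--Prasad depth-zero Hecke-algebra isomorphism, it matches the operator distinguishing $\vartheta$ from $\vartheta'$ on the finite side. The scalar by which $A(\lambda)$ acts on the eigenspace $\pi^n(\lambda)^{G_{z+}}$ is controlled by a local gamma factor of $\eta$ that I expect to evaluate, after suitable normalization, to $-\eta(2\varpi_E)q^{1/2}$. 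Combining this identification with the preceding step yields the asserted equality $\theta_\vartheta(\bar\gamma)=(1-\eta(2\varpi_E)q^{1/2})/2$; the second statement then follows immediately from Proposition~\ref{prop:A-packet}(ii) via $\theta_{\bar\sigma}(\bar\gamma)=-\theta_\vartheta(\bar\gamma)$.

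The principal technical obstacle is the careful normalization needed to link the $p$-adic and finite intertwining operators consistently and to express the resulting gamma factor in the explicit form $\eta(2\varpi_E)q^{1/2}$. This involves tracking the dependence on the choice of uniformizer $\varpi_E$ and on the arithmetic of the lower-left entry of $\bar\gamma$ (the factor $\eta(2\varpi_E)$ encodes both the behaviour of $\eta$ at $\varpi_E$ and a square-class discriminant of that entry, via a Gauss sum). Once this identity is in hand, the remainder of the argument is a routine application of the character formulas recalled above.
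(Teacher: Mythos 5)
The paper explicitly omits the proof of this proposition, describing it only as involving computations of $\SL(2,k_F)$-character values on unipotent elements together with ``a careful analysis of intertwining operators on the representations $\pi = \ind_B^G\lambda$ of $G$ and $\ind_{\finiteB_z}^{\finiteG_z\conn}\bar\eta$ of $\finiteG_z\conn$.'' Your outline hits exactly these ingredients: the Frobenius computation giving $\theta_\vartheta(\bar\gamma)+\theta_{\vartheta'}(\bar\gamma)=1$, the classical $\SL(2)$ fact that the two degree-$(q+1)/2$ constituents separate on regular unipotents by a Gauss-sum term, and the matching of the $p$-adic standard intertwining operator with a finite intertwining operator on the $G_{z+}$-fixed vectors to decide which constituent is $\vartheta=\pi^n(\lambda)^{G_{z+}}$. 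So at the level of strategy your plan is the same as the paper's (omitted) proof.

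What you have not done, though, is the computation that actually carries the content of the statement: the precise normalization of the intertwining operator $A(\lambda)$, its compatibility via the depth-zero Hecke-algebra isomorphism with the finite operator, and the evaluation of the resulting local constant to $-\eta(2\varpi_E)q^{1/2}$. You flag this as the ``principal technical obstacle'' and say you ``expect to evaluate'' it; that is exactly the step that must be worked out, and without it the argument does not yet establish the sign (equivalently, which of $\vartheta,\vartheta'$ is $\pi^n(\lambda)^{G_{z+}}$). One point to be careful about when you carry it out: if $q\equiv 3\pmod 4$ then $\eta(\varpi_E)^2=\omega_{E/F}(-1)=-1$, so $\eta(\varpi_E)$ is a primitive fourth root of unity, $\eta(2\varpi_E)q^{1/2}$ is purely imaginary, and the values $\tfrac12\bigl(1\pm\eta(2\varpi_E)q^{1/2}\bigr)$ are not real. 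Your phrase ``the sign of $q^{1/2}$ depending on $\dots$ the residue of $q$ modulo $4$'' elides this; it is not a sign ambiguity but a real-versus-imaginary dichotomy, and the correct bookkeeping of $\eta(2)$ (coming from the lower-left entry $1$ of $\bar\gamma$ and the standard form of the unipotent) together with $\eta(\varpi_E)$ is where the Gauss sum $\sqrt{\omega_{E/F}(-1)\,q}$ gets converted into the claimed $\eta(2\varpi_E)q^{1/2}$.
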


\section{Explicit base change for ramified unitary groups in three variables}
\label{sec:base_change}

\subsection{$L$-packets consisting of principal series constituents}
\label{sub:bc_ps}
The following proposition gives the base change lifts of the principal
series $L$-packets of $G$.  Recall that
$\utH$ is an $F$-Levi subgroup of $\utP$.  Note that there is a natural
isomorphism $\utH\longrightarrow\utH^0\times\utZ$.  In particular,
$\tH\cong\GL(2,E)\times\GL(1,E)$, and in the
statement of the proposition, we identify these groups.
\begin{prop}
\label{prop:bc_ps}
Let $\lambda$ be a character of $M$ and let $\tilde\lambda$ denote the
character of $\tM$ lifted from $\lambda$.  Let $\lambda_1$ and
$\lambda_2$ be the characters of $E^\times$ and $E^1$, respectively,
associated to $\lambda$.  Let $\tilde\lambda_2$ be the character
$\lambda_2\circ\N^{U(1)}|_{E\mult}$ of $E\mult$.
\begin{enumerate}[(i)]
\item If $\ind_B^G\lambda$ is irreducible and
        $\ind_{\widetilde B}^{\widetilde G}\tilde\lambda$
   is irreducible, then the base
  change lift of the $L$-packet $\{ \ind_B^G \lambda\}$ is
  $\ind_{\widetilde{B}}^\tG\tilde\lambda$.
\item  If $\ind_B^G\lambda$ is irreducible but
   $\ind_{\widetilde B}^{\widetilde G}\tilde\lambda$ is reducible, then
  $\lambda_1|_{F\mult} = |\phantom{x}|_F^{\pm1}$, and the base
  change lift of $\{ \ind_B^G \lambda\}$ is the constituent
$$
\ind_{\widetilde
    P}^{\widetilde G} \Bigl( ( \lambda_1\tilde\lambda_2|\cdot|_E^{\mp
    1/2}\circ \det_{\utH^0}) \otimes \tilde\lambda_2 \Bigr).
$$
of $\ind_{\widetilde{B}}^\tG\tilde\lambda$.
\item  If $\lambda_1 = |\phantom{x} |_E^{\pm 1}$, then the lift
  of the $L$-packet consisting of the one-dimensional constituent
  $\psi=\lambda_2\circ\det_{\uG}$ (respectively, the
  constituent $\St_G\cdot\psi$) of $\ind_B^G\lambda$ is the
  one-dimensional constituent
$\tilde{\psi} = \tilde\lambda_2\circ\det_\utG$
(respectively, the constituent
$\St_\tG\cdot\tilde{\psi}$) of
  $\ind_{\widetilde{B}}^\tG \tilde\lambda$.
\item If $\lambda_1|_{F\mult}$ is trivial and $\lambda_1$ is
  nontrivial, then the lift of the $L$-packet
  $\{\pi_1(\lambda),\pi_2(\lambda)\}$ is
  $\ind_{\widetilde{B}}^\tG \tilde\lambda$.
\end{enumerate}
\end{prop}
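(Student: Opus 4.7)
The overall strategy is to reduce the proposition to a single base change identity at the torus level and then match constituents case by case. A direct computation using \eqref{eqn:lambda} and the action of $\varepsilon$ on the diagonal torus $\tM$ of $\tG\cong\GL(3,E)$ shows that $\tilde\lambda = \lambda\circ \N^\tM$ as characters of $\tM$: explicitly, for $\tilde m = \text{diag}(a,b,c)$, one has $\N^\tM(\tilde m) = \text{diag}(a\iota(c)\inv, b\iota(b)\inv, c\iota(a)\inv) \in M$, and substitution into \eqref{eqn:lambda} recovers $\tilde\lambda(\tilde m) = \lambda_1(a\iota(c)\inv)\,\tilde\lambda_2(abc)$. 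Combined with the Weyl character formula for parabolically induced representations and its $\varepsilon$-twisted analogue, this yields
$$
\theta_{\ind_\tB^\tG\tilde\lambda,\,\varepsilon}(\tilde m) \;=\; \theta_{\ind_B^G\lambda}\bigl(\N(\tilde m)\bigr)
$$
for strongly $\varepsilon$-regular $\tilde m\in\tM$.  By Rogawski~\cite{rogawski:u3}*{\S11,\S13}, the Shintani lift coincides with the base change lift for tempered representations of $\uG$, so this identity (interpreted as an identity of $\varepsilon$-stable distributions on $\tG$) constrains the lift in each case; it remains to identify the correct constituent on the $\tG$-side.

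Cases (i), (iii), and (iv) are straightforward once one inspects the reducibility of $\ind_\tB^\tG\tilde\lambda$.  In (i) both sides are irreducible and the identity directly exhibits the lift.  In (iii), $\ind_B^G\lambda = \psi\oplus \St_G\cdot\psi$ and $\ind_\tB^\tG\tilde\lambda = \tilde\psi\oplus \St_\tG\cdot\tilde\psi$; matching central characters via $\tilde\lambda_2 = \lambda_2\circ \N^{U(1)}|_{E\mult}$ identifies $\tilde\psi$ as the lift of $\psi$, and subtraction pairs the Steinberg twists.  In (iv), writing $\tilde\lambda = \chi_1\otimes\chi_2\otimes\chi_3$, the hypotheses $\lambda_1|_{F\mult}=1$ and $\lambda_1\neq 1$ force all three ratios $\chi_i\chi_j\inv$ to avoid $|\cdot|_E^{\pm 1}$, so $\ind_\tB^\tG\tilde\lambda$ is irreducible and the packet-sum identity is exactly the assertion.

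Case (ii) is the principal obstacle.  Computing $\chi_1 = \lambda_1\tilde\lambda_2$, $\chi_2=\tilde\lambda_2$, $\chi_3 = (\lambda_1\circ\iota)\inv\tilde\lambda_2$, one finds $\chi_1\chi_2\inv = \lambda_1$ and $\chi_1\chi_3\inv = \lambda_1\cdot(\lambda_1\circ\iota)$.  The hypothesis $\lambda_1|_{F\mult}=|\cdot|_F^{\pm 1}$ together with $\lambda_1 \neq |\cdot|_E^{\pm 1}$ (excluding case (iii)) is then equivalent to the non-adjacent reducibility $\chi_1\chi_3\inv = |\cdot|_E^{\pm 1}$, with both adjacent ratios remaining generic.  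Inducing $\tilde\lambda$ in stages through $\tP$, the inner $\GL(2,E)$-principal series on the $\utH^0$-factor reduces into a one-dimensional piece $(\chi_1|\cdot|_E^{\mp 1/2})\circ\det_{\utH^0}$ and a Steinberg twist, giving exactly two constituents of $\ind_\tB^\tG\tilde\lambda$.  The specified constituent in (ii) is the one arising from the one-dimensional piece.  To identify it as the lift, rather than the Steinberg-induced constituent, I would argue via induction in stages: the base change of the inner principal series on $H^0$ is given by the $U(1,1)$ description in \S\ref{sec:bc-u11} (specifically its analogue of case (iii)), which produces precisely the one-dimensional constituent of the $\tH^0$-principal series; induction from $\tP$ to $\tG$ then yields the specified constituent.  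Equivalently, the $L$-parameter of $\ind_B^G\lambda$ is semisimple (no monodromy, since $\lambda$ is a torus character and $\ind_B^G\lambda$ is irreducible), so its base change parameter is also semisimple, which forces the lift to be the character-induced rather than the Steinberg-induced constituent (the latter's parameter has a nontrivial monodromy coming from the length-two segment $(\chi_3,\chi_1)$).
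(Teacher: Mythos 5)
Your proposal is correct in substance, but it takes a genuinely different route from the paper: the paper's proof is a one-line observation that the statement and argument carry over verbatim from the unramified case, \cite{adler-lansky:bc-u3-unram}*{Prop.~4.1}, which in turn leans on Rogawski's results on base change for principal series rather than on a from-scratch twisted-character computation. Your norm computation on $\tM$, the identification $\tilde\lambda=\chi_1\otimes\chi_2\otimes\chi_3$ with $\chi_1\chi_2\inv=\lambda_1$ and $\chi_1\chi_3\inv=(\lambda_1|_{F\mult})\circ N_{E/F}$, and the resulting reducibility analysis in each of the four cases are all accurate, and they make explicit what the paper leaves to a citation. What your route buys is transparency about exactly where each hypothesis on $\lambda_1$ enters; what it costs is that in the reducible cases the character identity for the full induced representation does not by itself single out a constituent, and the constituents in question are not tempered, so the appeal to ``Shintani $=$ base change for tempered representations'' must be supplemented by Rogawski's definition of the lifting for nontempered representations via Langlands quotients --- which is precisely what your semisimple-parameter/monodromy argument for case (ii) encodes. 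That argument is the right one and should be phrased as an appeal to the compatibility of the lifting with Langlands data.

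One sub-argument should be removed: your first justification for case (ii), inducting in stages through ``the inner principal series on $H^0$,'' is not available over $F$. The group $\uH$ is an $E$-Levi but not an $F$-Levi subgroup of $\uG$ (the only proper $F$-parabolic of $\uG$ is the Borel), so no $H^0$-principal series arises as an intermediate stage of $\ind_B^G\lambda$, and the $U(1,1)$ base change of \S\ref{sec:bc-u11} cannot be fed through a parabolic of $G$. Since you present this only as an alternative to the parameter argument, the proof survives, but as written that sentence is a gap. Two minor inaccuracies that do not affect the conclusions: in case (iii), $\ind_B^G\lambda$ is not a direct sum of $\psi$ and $\St_G\cdot\psi$ (it is a nonsplit extension), and $\ind_\tB^\tG\tilde\lambda$ there has four constituents rather than two, of which $\tilde\psi$ and $\St_\tG\cdot\tilde\psi$ are the relevant ones.
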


\begin{proof}
The statement and proof of this proposition are identical to
those of~\cite{adler-lansky:bc-u3-unram}*{Prop.~4.1}.
\end{proof}

\subsection{Supercuspidal $L$-packets}
\label{sub:bc_sc}
Let $\rho$ be an irreducible admissible representation of $H =
H^0\times Z$.
Suppose $\rho$ factors as $\rho^0\otimes\psi$ for some
irreducible admissible representations $\rho^0$ of $H^0$ and $\psi$ of
$Z$.
The group $\uH = \uH^0\times \uZ$
may also be factored as $\uH = \uH^0\times \uJ$,
where $\uJ\cong\uZ\cong\UU(1,E/F)$ is the $F$-subgroup of $\uH$ consisting
of matrices of the form
$
\left(
\begin{smallmatrix}
1 & 0 & 1 \\
0 & * & 0 \\
1 & 0 & 1
\end{smallmatrix}
\right).
$
The factorization of $\rho$ with respect to the latter decomposition of $H$
(denoted by the symbol $\otimes'$) is 
$$
\rho^0\otimes '\psi\omega\inv_{\rho^0},
$$
where we view $\psi$ and
$\omega_{\rho^0}$ as representations of $J$ via the obvious
isomorphism of $J$ with $Z$.
An analogous
result holds for $\tH$ when one converts between the decompositions
$\utH^0\times\utZ$ and $\utH^0\times\utJ$ of $\utH$, where $\utJ$ is
the $F$-subgroup of $\utH$ consisting of matrices of the form
$
\left(
\begin{smallmatrix}
1 & 0 & 1 \\
0 & * & 0 \\
1 & 0 & 1
\end{smallmatrix}
\right).
$
We will again use the notation $\otimes'$ to denote factorization with
respect to the second decomposition.

We now recall the labile base change lifting for the groups $H^0$
and $H$ (see~\cite{rogawski:u3}*{\S11.4,12.1}).
If $\rho^0$ is an element of a singleton $L$-packet, let
$\tilde\rho^0$ denote the (stable) base change lift of $\rho^0$ to
$\tH^0$.
The labile base change lift $\rho^0$ is defined to be
$\tilde\rho^0\cdot(\Omega\circ\det_{\utH^0})$.
Similarly, the labile base
change lift of the irreducible representation $\rho\otimes'\psi$ of
$H = H^0\times J$ is defined to be the representation
$\tilde\rho^0\cdot(\Omega\circ\det_{\utH^0})\otimes'\tilde\psi$ of
$\tH^0\times\widetilde J$.  Here, as usual, $\tilde\psi$ denotes the
character of the torus $\widetilde J$ lifted from $\psi$.

We will need the following lemma to determine the base change lift of
a supercuspidal $L$-packet of $G$.  For the remainder of this section,
$\det$ will be understood to mean $\det_{\utH^0}$.
\begin{lem}
\label{lem:bc-labile}
Let $\rho = \rho^0\otimes\psi$ be an irreducible admissible
representation of $H = H^0\times Z$.  If $\{\rho^0\}$ is an $L$-packet
of $H^0$,
then the labile base change lift $\tilde\rho'$
of $\rho$ is 
$$
\tilde\rho^0\cdot (\Omega\circ\det)\otimes\tilde\psi\Omega^2.
$$
\end{lem}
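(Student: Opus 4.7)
The plan is to move $\rho$ from the $H^0 \times Z$ decomposition into the alternative $H^0 \times \uJ$ decomposition, apply the definition of the labile base change lift (which is formulated precisely for the latter), and then convert the resulting representation of $\tH = \utH^0 \times \widetilde{Z} = \utH^0 \times \widetilde{J}$ back into the $\utH^0 \times \widetilde{Z}$ decomposition. The whole argument will be a formal manipulation once one has the conversion formulas from the paragraphs preceding the lemma and one standard input about base change.

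First, I would use the conversion formula recalled just above the lemma to rewrite
$$
\rho \;=\; \rho^0 \otimes \psi \;=\; \rho^0 \otimes' \bigl(\psi \cdot \omega_{\rho^0}^{-1}\bigr).
$$
Applying the definition of the labile base change lift for an irreducible representation of $H^0 \times \uJ$ then gives
$$
\tilde\rho' \;=\; \tilde\rho^0 \cdot (\Omega\circ\det_{\utH^0}) \;\otimes'\; \widetilde{\psi \cdot \omega_{\rho^0}^{-1}}.
$$
Because the character lift $\eta \mapsto \tilde\eta = \eta \circ \N^{\utZ}$ is multiplicative, the second factor simplifies to $\tilde\psi \cdot \widetilde{\omega_{\rho^0}}^{-1}$.

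Next, I would convert back from the $\utH^0 \times \widetilde{J}$ to the $\utH^0 \times \widetilde{Z}$ decomposition using the analogue for $\tH$ of the text's conversion formula, namely $\tilde\tau^0 \otimes' \chi' = \tilde\tau^0 \otimes (\chi' \cdot \omega_{\tilde\tau^0})$. I would apply this with $\tilde\tau^0 = \tilde\rho^0 \cdot (\Omega \circ \det_{\utH^0})$, whose central character is $\omega_{\tilde\rho^0} \cdot \Omega^2$ (since a scalar element of the center of $\GL(2,E)$ has determinant a square, so $\Omega \circ \det$ contributes $\Omega^2$ to the central character). This yields
$$
\tilde\rho' \;=\; \tilde\rho^0 \cdot (\Omega \circ \det_{\utH^0}) \;\otimes\; \bigl(\tilde\psi \cdot \widetilde{\omega_{\rho^0}}^{-1} \cdot \omega_{\tilde\rho^0} \cdot \Omega^2\bigr).
$$

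The final step would be to invoke the standard fact that stable base change commutes with taking central characters, giving $\omega_{\tilde\rho^0} = \widetilde{\omega_{\rho^0}}$; the middle two factors in the display above then cancel, leaving $\tilde\psi \Omega^2$ in the second slot, as claimed. The only subtle point is to make sure the various rank-one tori ($Z$, $\uJ$, the center of $\uH^0$, and their tilde versions) are matched up compatibly across the identifications, but this is precisely what the text's ``obvious'' isomorphism $\uJ \cong \uZ$ (and its analogue for $\tH$) encodes. Apart from this, the essential nontrivial ingredient is the compatibility of base change with central characters, which is well known in the $\GL(2)$/$U(1,1)$ setting at hand.
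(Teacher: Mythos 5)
Your proof is correct and follows essentially the same route as the paper's: convert $\rho$ from the $H^0\times Z$ to the $H^0\times J$ factorization, apply the definition of the labile lift, and convert back, using the central-character bookkeeping to simplify. The only cosmetic difference is that you unpack the paper's single identity $\omega_{\tilde\rho^0\cdot(\Omega\circ\det)}=\tilde\omega_{\rho^0}\Omega^2$ into the two facts $\omega_{\tilde\rho^0\cdot(\Omega\circ\det)}=\omega_{\tilde\rho^0}\Omega^2$ and $\omega_{\tilde\rho^0}=\widetilde{\omega_{\rho^0}}$, the latter being the compatibility of base change with central characters, which the paper uses implicitly.
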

\begin{proof}
  By the preceding discussion, we may factor $\rho$ as $\rho^0\otimes'
  \psi\omega\inv_{\rho^0}$ with respect to the decomposition
  $H=H^0\times J$.
  Then $\tilde\rho'$ is equal to
  $$
  \tilde\rho^0\cdot(\Omega\circ\det) \otimes'
  \tilde\psi\tilde\omega\inv_{\rho^0}.
  $$
  Thus, with respect to the
  decomposition $\tH = \tH^0\times\widetilde{Z}$, $\tilde\rho'$
  factors as
  $$
  \tilde\rho^0\cdot(\Omega\circ\det) \otimes
  \tilde\psi\tilde\omega\inv_{\rho^0}
  \omega_{\tilde\rho^0\cdot(\Omega\circ\det)}.
  $$
  But 
  $$
  \omega_{\tilde\rho^0\cdot(\Omega\circ\det)} =
  \tilde\omega_{\rho^0}\Omega^2,
  $$
  where, on the we right side, we view $\Omega$ as a character of
  $\widetilde{Z} = E\mult$.
  The result follows.
\end{proof}

We now consider a supercuspidal $L$-packet of $G$.
Fix a character
$\nu$ of $\finiteZ$ and a character
$\chi$ of $\finiteS^\circ$ of order greater
than $2$. 
We recall the notation of~\S\ref{sub:sc}.
As in \S\ref{sec:depth-zero-G}, $\chi$ and $\nu$ determine a cuspidal
representation $\bar\sigma_x$ of $\finiteG_x$,
where $x$ is either $y$ or $z$.
Similarly, as in the discussion before
Proposition~\ref{prop:sc_card2}, $\chi\phi$ and $\nu\bar\Omega$
determine a cuspidal
representation $\bar\tau$ of
$\finiteH_z$, where $\phi$ is the
character of $\finiteS^0$ of order $2$.
Let 
$$
\pi_x = \ind_{\stab_G(x)}^G\sigma_x, \qquad
\rho = \ind_{\stab_H(z)}^H\tau,
$$
where $\sigma_x$ and $\tau$ are the inflations of $\bar\sigma_x$ and
$\bar\tau$, respectively. 
Then by Proposition~\ref{prop:sc_card2}, the $L$-packet
$\Pi = \{\pi_y,\pi_z\}$ is the transfer via $\Xi_G$ of $\{\rho\}$ from $H$.
Note that we can express $\rho$ as the representation
$\rho^0\otimes\widehat\nu\Omega$ (relative to the decomposition
$H=H^0\times Z$), where
$\rho^0$ is the representation of $H^0$ of the form~\Hdepthzerosc{1}
coming from the cuspidal Deligne-Lusztig representation $\bar\sigma$
of $\finiteH^0$ associated to $\chi\phi$.

\begin{prop}
\label{prop:bc_sc}
The base change lift of the $L$-packet $\Pi = \{\pi_y,\pi_z\}$ is the
representation
$\ind_{\widetilde{P}}^\tG
\left(
\tilde\rho^0\cdot(\Omega\circ\det)\otimes \widetilde{\widehat{\nu}}
\right)$,
where $\tilde\rho^0$ is the base change
lift of $\rho^0$ to $\tH^0$.
\end{prop}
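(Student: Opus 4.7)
The plan is to identify the base change lift of $\Pi$ with a parabolic induction from $\widetilde{P}$ of a representation of $\tH$, then to compute that inducing datum using Lemma~\ref{lem:bc-labile} and simplify the $\widetilde{Z}$-factor.

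First, by Proposition~\ref{prop:sc_card2} we have $\Pi = \Xi_G(\{\rho\})$, so the stable character of $\Pi$ on $G\reg$ equals (up to sign) the endoscopic transfer of $\theta_\rho$. I will invoke the compatibility between endoscopic transfer and base change in Rogawski's framework \cite{rogawski:u3}*{\S12--13}: over $E$, the endoscopic group $\uH$ becomes an $E$-Levi subgroup of $\uG$, and the twisted $L$-homomorphism $\lsup L\uH \to \lsup L\uG$ associated to $\Omega$, followed by the diagonal embedding $\lsup L\uG \to \lsup L\utG$, factors through the Levi embedding $\lsup L\utH \to \lsup L\utG$. This will yield the key intermediate identity that the base change lift of $\Xi_G(\{\rho\})$ equals $\ind_{\widetilde{P}}^{\tG}(\tilde\rho')$, where $\tilde\rho'$ denotes the labile base change lift of $\rho$ to $\tH$. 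Establishing this as a twisted character identity on $\tG\ereg$ will be the main content, and proceeds by chaining Rogawski's endoscopic character identity for $\Xi_G$ with the labile base change identity for $\{\rho\}$ and the character formula for parabolic induction.

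Next I will compute $\tilde\rho'$ explicitly. Since $\rho^0$ is of type~\Hdepthzerosc{1}, $\{\rho^0\}$ is a singleton $L$-packet, so Lemma~\ref{lem:bc-labile} applies to $\rho = \rho^0\otimes\widehat\nu\Omega$ and gives
\[
\tilde\rho' \;=\; \tilde\rho^0\cdot(\Omega\circ\det_{\utH^0}) \;\otimes\; \widetilde{\widehat\nu\Omega}\cdot\Omega^2.
\]
It then suffices to show that $\widetilde{\widehat\nu\Omega}\cdot\Omega^2 = \widetilde{\widehat\nu}$ on $\widetilde{Z} = E\mult$. The norm identity $\Omega(\lambda)\Omega(\iota(\lambda)) = \omega_{E/F}(N_{E/F}(\lambda)) = 1$ for $\lambda \in E\mult$ gives $\tilde\Omega(\lambda) = \Omega(\lambda)/\Omega(\iota(\lambda)) = \Omega(\lambda)^2$, so $\widetilde{\widehat\nu\Omega} = \widetilde{\widehat\nu}\cdot\Omega^2$. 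It remains to verify $\Omega^4 = 1$ on $E\mult$: one has $\Omega^2|_{F\mult} = \omega_{E/F}^2 = 1$; the hypotheses $\Omega|_{1+\pp_E}=1$ and $\Omega|_{k\mult} = \omega_{E/F}|_{k\mult}$ (a quadratic character) together force $\Omega^2|_{\OK_E\mult} = 1$; and applying the norm identity to $\varpi_E$ with $\iota(\varpi_E) = -\varpi_E$ yields $\Omega(\varpi_E)^2 = \omega_{E/F}(-1)^{-1} \in \{\pm 1\}$, hence $\Omega(\varpi_E)^4 = 1$. Combining these shows $\Omega^4 \equiv 1$, so the $\widetilde{Z}$-factor collapses to $\widetilde{\widehat\nu}$, which yields the stated formula.

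The main obstacle is the first step, transporting $\Xi_G$ through base change to realize $\tilde\Pi$ as a parabolic induction from $\widetilde{P}$. Once this identification is secured from Rogawski's framework, the remainder is a direct computation of lifted characters on the one-dimensional torus $\widetilde{Z}$.
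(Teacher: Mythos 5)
Your proposal is correct and follows essentially the same route as the paper: reduce to the identity $\BC(\Xi_G(\{\rho\}))=\ind_{\widetilde P}^{\tG}(\tilde\rho')$, compute $\tilde\rho'$ via Lemma~\ref{lem:bc-labile}, and kill the extra factor on $\widetilde Z$. The one point worth noting is that what you flag as ``the main obstacle'' is in the paper a direct citation of Rogawski's Prop.~13.2.2(c) rather than an identity to be re-derived, and your central-character simplification (showing $\widetilde\Omega=\Omega^2$ and $\Omega^4=1$) is a harmless variant of the paper's one-line computation $\widetilde\Omega\Omega^2=\widetilde\Omega\inv\Omega^2=\Omega\circ N_{E/F}=1$.
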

It is readily checked that the equivalence class of
$\tilde\rho^0\cdot(\Omega\circ\det)$, and hence that of 
$\ind_{\widetilde{P}}^\tG
\left(
\tilde\rho^0\cdot(\Omega\circ\det)\otimes \widetilde{\widehat{\nu}}
\right)$,
is independent of the choice of $\Omega$.
\begin{proof}
Since $\Pi$ is the endoscopic transfer of $\rho$,
  \cite{rogawski:u3}*{Prop.~13.2.2(c)}
  implies that the base change lift
  of $\Pi$ is $\ind_{\widetilde{P}}^\tG(\tilde\rho')$,
  where $\tilde\rho'$ is the labile base change lift of $\rho$.
  It follows from
  Lemma~\ref{lem:bc-labile} that the labile base change lift of $\rho
  = \rho^0\otimes\widehat\nu\Omega$ is
  $$
  \tilde\rho^0\cdot (\Omega\circ\det)\otimes
  \widetilde{\widehat\nu}\widetilde\Omega\Omega^2.
  $$
  Now as a character of $E\mult$, $\widetilde\Omega$ is the map
  $a\mapsto \Omega (a/\iota(a))$.
  Since $\Omega|_{F\mult} = \omega_{E/F}$, it follows that for any
  $a\in E^\times$, $\widetilde\Omega\Omega^2 (a) =
  \widetilde\Omega\inv\Omega^2 (a) = 
  \Omega (N_{E/F}(a)) = 1$.
  This proves the proposition.
\end{proof}

\subsection{Non-supercuspidal $L$-packets containing supercuspidals}
\label{sub:stable_sc}

\begin{prop}
\label{prop:bc_card_2}
Let $\lambda$ be a character of $M$ of depth zero such that
$\lambda_1|_{F\mult} = \omega_{E/F}|\cdot |_F^{\pm 1}$.
\begin{enumerate}[(i)]
\item The base change lift of the $L$-packet
$\{ \pi^2 (\lambda),\pi^s (\lambda)\}$ is 
$$
\ind_{\widetilde{P}}^\tG \left(\St_{\widetilde{H}}\cdot\left( 
(\lambda_1 \tilde{\lambda}_2 |\cdot |_E^{\mp 1/2} \circ\det_{\utH^0})
\otimes \tilde{\lambda}_2  
\right)\right).
$$
\item The base change lift of the $A$-packet
$\{ \pi^n (\lambda),\pi^s (\lambda)\}$ is 
$$
\ind_{\widetilde{P}}^\tG \left( 
(\lambda_1 \tilde{\lambda}_2 |\cdot |_E^{\mp 1/2} \circ\det_{\utH^0} )
\otimes \tilde{\lambda}_2  
\right).
$$
\end{enumerate}
Moreover, the above two base change lifts are precisely the
irreducible constituents of the principal series representation
$\ind_\tB^\tG \tilde\lambda$.
\end{prop}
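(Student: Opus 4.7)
The plan is to exploit the fact that every packet on $\widetilde G \cong \GL(3,E)$ is a singleton, so each packet on $G$ lifts to a single irreducible representation, and the two lifts described in (i) and (ii) can be identified with the two irreducible constituents of $\ind_{\widetilde B}^{\widetilde G}\tilde\lambda$. I will proceed in three steps: decompose $\ind_{\widetilde B}^{\widetilde G}\tilde\lambda$; show that $\pi^2(\lambda)$ and $\pi^n(\lambda)$ lift to these two constituents; and verify that adjoining $\pi^s(\lambda)$ to form a packet does not change the lift.

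First, I would compute $\tilde\lambda = \lambda\circ N^{\widetilde M}_\varepsilon$ on $\widetilde M$ entry-by-entry. The hypothesis $\lambda_1|_{F\mult} = \omega_{E/F}|\cdot|_F^{\pm 1}$ translates into the standard reducibility pattern for principal series on $\GL(3,E)$ that factors through $\widetilde P$: the induction to $\widetilde H$ of the restriction of $\tilde\lambda$ to $\widetilde M\cap\widetilde H$ is reducible with a twisted Steinberg constituent $\St_{\widetilde H}\cdot\kappa$ and a character $\kappa$, where $\kappa = (\lambda_1\tilde\lambda_2|\cdot|_E^{\mp 1/2}\circ\det_{\widetilde H^0})\otimes\tilde\lambda_2$. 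Inducing further from $\widetilde P$ to $\widetilde G$ then yields exactly the two representations named in (i) and (ii). This step also proves the final sentence of the proposition and identifies the constituents of $\ind_{\widetilde B}^{\widetilde G}\tilde\lambda$.

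Second, since base change commutes with parabolic induction (see~\cite{rogawski:u3}*{\S13}), the Grothendieck class $[\pi^2(\lambda)]+[\pi^n(\lambda)] = [\ind_B^G\lambda]$ lifts to $[\ind_{\widetilde B}^{\widetilde G}\tilde\lambda]$. Because base change preserves temperedness and square-integrability, the discrete-series $\pi^2(\lambda)$ must lift to the tempered (Steinberg-induced) constituent, while the non-tempered $\pi^n(\lambda)$ must lift to the other.

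Finally, to handle $\pi^s(\lambda)$, I would invoke the $L$- and $A$-packet character identities from~\cite{rogawski:u3}*{Thm.~13.1.1, Prop.~13.2.2}. The base change lift of a packet $\Pi$ is characterized by the twisted-character identity $\theta_{\widetilde\Pi,\varepsilon}(\delta) = \theta_\Pi(\N\delta)$ (with appropriate $\varepsilon$-stable combinations), and uniqueness in the singleton setting on $\widetilde G$ forces the lift of $\{\pi^2,\pi^s\}$ to equal that of $\pi^2$ alone, and similarly for $\{\pi^n,\pi^s\}$. The main obstacle is keeping Rogawski's sign conventions straight: $\pi^s$ contributes with opposite signs in the two packet identities (as seen in~\eqref{eq:char_identity}), so one must verify that the tempered constituent is indeed selected for the $L$-packet in (i) and the non-tempered one for the $A$-packet in (ii). This selection is essentially forced by the temperedness compatibility of base change combined with the fact that $\pi^s$ and $\pi^2$ share an $L$-packet while $\pi^s$ and $\pi^n$ share an $A$-packet.
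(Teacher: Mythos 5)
The overall shape of your argument is reasonable, and your computation of $\tilde\lambda$ and the resulting decomposition of $\ind_{\widetilde B}^{\widetilde G}\tilde\lambda$ into a twisted-Steinberg constituent and a one-dimensional constituent is correct. But the two middle steps are not tight enough to constitute a proof.

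First, the assertion that ``the Grothendieck class $[\ind_B^G\lambda]$ lifts to $[\ind_{\widetilde B}^{\widetilde G}\tilde\lambda]$'' is not a well-posed use of base change. Base change, as established by Rogawski, is a map on $L$-packets and $A$-packets; it does not directly apply to an arbitrary virtual representation. Here $\ind_B^G\lambda$ is reducible, and its constituents $\pi^2(\lambda)$ and $\pi^n(\lambda)$ lie in \emph{different} packets: $\pi^2$ sits in the discrete $L$-packet $\{\pi^2,\pi^s\}$ and $\pi^n$ in the singleton $\{\pi^n\}$ (and the $A$-packet $\{\pi^n,\pi^s\}$). The stable twisted-character identity for principal series gives one relation, but it cannot by itself determine the lift of $\{\pi^2,\pi^s\}$, precisely because $\pi^s$ drops out of the sum $\theta_{\pi^2}+\theta_{\pi^n}$. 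Second, the phrase ``forces the lift of $\{\pi^2,\pi^s\}$ to equal that of $\pi^2$ alone'' is not well-defined, since base change of the single representation $\pi^2$ is not a notion in this framework; you are gesturing at a uniqueness argument without actually supplying one.

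The route the paper takes (by citation to the unramified analogue, Prop.~4.2 of \cite{adler-lansky:bc-u3-unram}, and in exact parallel with the proof of Proposition~\ref{prop:bc_sc} given in this paper) is to apply Rogawski's endoscopic-transfer compatibility directly. The $L$-packet $\{\pi^2,\pi^s\}$ is the transfer via $\Xi_G$ of a twisted Steinberg $L$-packet on $H$, and the $A$-packet $\{\pi^n,\pi^s\}$ is the transfer of the corresponding one-dimensional character $\xi$; then \cite{rogawski:u3}*{Prop.~13.2.2(c)} and its $A$-packet analogue say the base change lift is $\ind_{\widetilde P}^{\widetilde G}$ of the labile base change lift of the relevant representation of $H$. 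Computing that labile lift, as in Lemma~\ref{lem:bc-labile}, yields the formulas in (i) and (ii) directly, after which the final ``moreover'' sentence is read off from the decomposition you computed in your first step. This avoids the temperedness-matching argument altogether, and more importantly it produces the explicit form of the answer rather than merely selecting between two candidates. If you want to keep the temperedness-matching route, you would need to (a) replace the Grothendieck-class reasoning with the parameter-level statement that base change carries the bounded parameter of $\{\pi^2,\pi^s\}$ to the bounded parameter of the Steinberg-induced constituent, and (b) supply a genuine argument that the lift of $\{\pi^2,\pi^s\}$ is a constituent of $\ind_{\widetilde B}^{\widetilde G}\tilde\lambda$ in the first place, which again comes most cleanly from Rogawski's transfer compatibility.
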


Note that the proposition has the same content if we restrict
the choice of exponent in the hypothesis to be $+1$
(or to be $-1$).

\begin{proof}
Identical to the proof of \cite{adler-lansky:bc-u3-unram}*{Prop.~4.2}.
\end{proof}

\section{Compatibility of base change and minimal $K$-types}
\label{sec:K-types}

\subsection{Principal series $L$-packets}
\label{subsub:principal_series}
As in~\S\ref{sub:bc_ps}, suppose $\Pi$ consists entirely of
constituents of the depth-zero principal series representation
$\ind_B^G \lambda$.
Since each element of $\Pi$ has depth zero, $\ind_B^G \lambda $ and
hence $\lambda$ have depth zero
by~\cite{moy-prasad:jacquet}*{Theorem~5.2}.
It follows from~\cite{moy-prasad:jacquet} that
$(G_\F,\infl\bar\lambda)$
is a minimal $K$-type contained in each element of $\Pi$, where
we have identified $\finiteM\conn$ with $G_\F/G_{\F+}$.
Similarly,
$(\tG_\F, \infl\tilde{\bar\lambda})$ is a minimal $K$-type of
each constituent of
$\tilde{\pi} = \ind_{\widetilde{P}}^\tG \tilde\lambda$ (see
Proposition~\ref{prop:bc_ps}), where
$\tG_{\F}/\tG_{\F+}$ is identified with
$\widetilde{\finiteM}$.
But the image of $\bar\lambda$ under the $\varepsilon$-lifting
from $\ufM\conn$ to
$\utfM$ of  is $\tilde{\bar\lambda}$.  Thus the Main Theorem
holds in this case.

\subsection{Supercuspidal $L$-packets}
Recalling the notation of \S\ref{sub:bc_sc}, suppose that $\Pi$ is the
supercuspidal $L$-packet $\{\pi_y(\chi,\nu),\pi_z(\chi,\nu)\}$.
For $x=y$ or $z$, $\pi_x$ contains the minimal $K$-type $(G_x,
\sigma_x)$, where $\bar\sigma_x$ is the cuspidal Deligne-Lusztig
representation of $\finiteG_x$ associated to the character $\chi$ of
$\finiteS^0 = \finiteS\conn \cong (\finiteS')\conn$.

By Proposition~\ref{prop:bc_sc}, the base change lift $\tilde\pi$ of
$\Pi$ is $\ind_{\widetilde{P}}^\tG
\left(
\tilde\rho^0\cdot(\Omega\circ\det)\otimes \widetilde{\widehat{\nu}}
\right)$, where $\rho^0$ is the supercuspidal representation of $H^0$
coming from the cuspidal Deligne-Lusztig representation $\bar\sigma$
of $\finiteH^0$ associated to $\chi\phi$.  
Therefore, by
Proposition~\ref{prop:bc_H_sc}, $\tilde\rho^0\cdot(\Omega\circ\det)$
is the representation
\begin{eqnarray*}
\lefteqn{
\Bigl(
        \ind_{\tH^0_z\tZ^0}^{\tH^0}\ext \infl
        \bigl(
                R_{\widetilde\ufS^0}^{\widetilde\ufH_z^0}\widetilde{\chi\phi}
        \bigr)
\Bigr) 
\cdot(\Omega\circ\det)}\\
& = & \ind_{\tH^0_z\tZ^0}^{\tH^0}\Bigl(\ext \infl
\bigl( R_{\widetilde\ufS^0}^{\widetilde\ufH_z^0}(\tilde\chi\tilde\phi)\bigr)
\cdot(\Omega\circ\det) \Bigr)  .
\end{eqnarray*}
Here the notation ``$\infl$'' denotes the inflation from
$\widetilde\finiteH_z^0$ to $\widetilde H_z^0$, and ``$\ext$'' denotes
the unique extension of the given representation of $\tH_z^0$ to
$\tH_z^0\tZ$ such that the
element of $\tZ$ corresponding to $\varpi_E$ acts via $(\chi\phi)(-1)$.
Now
$$
\ext \infl
\bigl( R_{\widetilde\ufS^0}^{\widetilde\ufH_z^0}(\tilde\chi\tilde\phi)\bigr)
\cdot(\Omega\circ\det)\\
=\ext'
\infl
\Bigl( R_{\widetilde\ufS^0}^{\widetilde\ufH_z^0}(\tilde\chi\tilde\phi)
\cdot(\bar\Omega\circ\det)
\Bigr) ,
$$
where ``$\ext'$'' denotes the extension such
that $\varpi_E$ acts via $(\chi\phi)(-1)\Omega(\varpi_E^2) =
(\chi\phi)(-1)\Omega(-1) = -\chi(-1)$.
Since the restriction of $\bar\Omega\circ\det$ to $\utfS^0$ is
quadratic, the latter representation is equivalent to 
$\ext' \infl
\bigl( R_{\widetilde\ufS^0}^{\widetilde\ufH_z^0}(\tilde\chi)\bigr)$.
Thus
$\tilde\rho^0\cdot(\Omega\circ\det)\otimes\widetilde{\widehat{\nu}}$
is equivalent to 
$$
\ind_{\tH^0_z\tZ}^{\tH}
\Bigl(
\ext' \infl
\bigl(
        R_{\widetilde\ufS^0}^{\widetilde\ufH^0_z}\tilde\chi\bigr) 
\otimes \widetilde{\widehat{\nu}}
\Bigr) .
$$
Since $\widetilde{\widehat{\nu}}$ is trivial on
the maximal compact subgroup $\tZ_0$ of $\tZ$,
the representation 
$$
\ext' \infl
\bigl(
        R_{\widetilde\ufS^0}^{\widetilde\ufH^0_z}\tilde\chi\bigr) 
\otimes \widetilde{\widehat{\nu}}
$$
of $\tH^0_z\tZ$ can be expressed as an extension to $\tH^0_z\tZ$ of
the representation
$$
\infl
\bigl(
        R_{\widetilde\ufS^0}^{\widetilde\ufH^0_z}\tilde\chi\bigr)\otimes
{\bf 1}_{\tZ_0} = 
\infl
        R_{\widetilde\ufS}^{\widetilde\ufH_z}(\tilde\chi\otimes {\bf
          1}_{\tfZ}).
$$
Since $\tilde\pi$ is parabolically induced from the representation 
$\tilde\rho^0\cdot(\Omega\circ\det)\otimes\widetilde{\widehat{\nu}}$,
which contains
$$
\Bigl(
\tH_z,
\infl
(R_{\widetilde\ufS}^{\widetilde\ufH_z}(\tilde\chi\otimes {\bf
          1}_{\tfZ}))\Bigr),
$$
and since $\tfG_z \cong \tfH_z$, we have from
\cite{moy-prasad:jacquet}*{Thm.~6.11} that
$\tilde\pi$ contains the $K$-type 
\begin{equation}
\label{eq:datum}
\Bigl(\tG_z,\infl\bigl(
R_{\widetilde{\ufS}}^{\widetilde{\ufG}_z}\bigl(\tilde\chi
\otimes {\bf 1}_{\tfZ}\bigr) \bigr)\Bigr) .
\end{equation}
But $R_{\widetilde{\ufS}}^{\widetilde{\ufG}_z}\bigl(\tilde\chi
\otimes {\bf 1}_{\tfZ}\bigr)$
is the $\varepsilon$-lift of $\bar\sigma_z$ since the character $\tilde\chi
\otimes {\bf 1}_{\tfZ}$ of $\widetilde{\finiteS}$ is the $\varepsilon$-lift of
the character $\chi$ of $\finiteS$.

We now consider the vertex $y$.  Note that 
$$
R_{\widetilde{\ufS'}}^{\widetilde{\ufG}_y}
\bigl(\tilde\chi\otimes {\bf 1}_{\tfZ} \bigr)
\cong \ind_{\finiteP_{z'}}^{\widetilde{\finiteG}_y} \bigl(
  R_{\widetilde{\ufS'}}^{\widetilde{\ufG}_{z'}}
\bigl(\tilde\chi\otimes {\bf 1}_{\tfZ}\bigr)
\bigr) ,
$$
where $S'=gSg\inv$ is the stable conjugate of $S$ fixing $y$
(see~\ref{sec:quadratic_torus}), $z' = g\cdot z$, and $\finiteP_{z'}$
is the image of $\tG_{z'}$ in $\widetilde{\finiteG}_y$.
Now $\tilde\pi$ must contain the minimal $K$-type
$$
\Bigl(\tG_{z'},
\infl\bigl(
R_{\widetilde{\ufS}}^{\widetilde{\ufG}_{z'}}\left(\tilde\chi
\otimes {\bf 1}_{\tfZ}\right) \bigr)\Bigr) ,
$$
since it is an associate of \eqref{eq:datum}.  
An application of Frobenius reciprocity then shows that $\tilde\pi$
must also contain
$$
\Bigl(\tG_y,
\infl\bigl(
R_{\widetilde{\ufS}'}^{\widetilde{\ufG}_y}\bigl(\tilde\chi
\otimes {\bf 1}_{\tfZ}\bigr) \bigr)\Bigr) ,
$$
and $R_{\widetilde{\ufS}'}^{\widetilde{\ufG}_y}(\tilde\chi
\otimes {\bf 1}) $
is the $\varepsilon$-lift of $\bar\sigma_y$.

\subsection{Non-supercuspidal $L$-packets containing supercuspidals}
Now suppose $\Pi $ is an $L$-packet of the form
$\{ \pi^2(\lambda) ,\pi^s(\lambda)\}$
or an $A$-packet of the form
$\{ \pi^n(\lambda) ,\pi^s(\lambda)\}$
for some $\lambda\in\Hom (M,\C\mult)$ of depth
zero (see case (\ref{sec:depth-zero-G}PS--2) and \S\ref{sub:ps}).
In either case, let $\tilde\pi$ denote the base change lift of $\Pi$.
Both $\pi^2(\lambda)$ and $\pi^n(\lambda)$ are constituents of the
principal series representation $\ind_B^G\lambda$.
It follows
from~\cite{moy-prasad:jacquet} that $\ind_B^G\lambda$ has
depth zero and that, for any $x\in\F$, $(G_x,\infl\bar\lambda)$ is a minimal
$K$-type for both $\pi^2(\lambda)$ and $\pi^n(\lambda)$.
By Proposition~\ref{prop:bc_card_2}, $\tilde{\pi}$ is a
constituent of the principal series
$\ind_{\widetilde{B}}^\tG \tilde\lambda$.
Therefore, as in \S\ref{subsub:principal_series},
$(\tG_{x},\infl\tilde{\bar\lambda})$ is a
minimal $K$-type for $\tilde{\pi}$.
The Main Theorem now follows for $\pi^2(\lambda)$ and $\pi^n(\lambda)$
exactly as it did in~\ref{subsub:principal_series}.

It remains to consider $\pi^s(\lambda)$
(both as an element of
$\{\pi^s(\lambda),\pi^2(\lambda)\}$ and as one of
$\{\pi^s(\lambda),\pi^n(\lambda)\}$).
As shown in Proposition~\ref{prop:A-packet}, $\pi^s(\lambda)$ is a
supercuspidal representation of the form~\depthzerosc{3}.  Therefore,
$\pi^s(\lambda)$ contains a minimal $K$-type of the form
$(G_z,\infl(\bar\sigma\otimes\nu)$, where $\bar\sigma$ is a cuspidal
representation of $\finiteG_z^\circ$ of degree $(q-1)/2$ and $\nu$ is a
character of $\finiteZ$.  Therefore, $(G_z,\infl\bar\sigma)$ is a
minimal $K$-type of $\pi^s(\lambda)$.

Now $\bar\sigma$ is a subrepresentation of
$R_{\ufS\conn}^{\ufG_z\conn}\phi$, where $\phi$ is the nontrivial
quadratic character of $\finiteS\conn$.  Hence $\bar\sigma$ is a member
of the rational series of representations of $\finiteG_z\conn$ attached to
$\phi$ (see \S\ref{sub:lifting-finite}).  The image of this rational series
under the $\varepsilon$-lifting is
the rational series of representations of $\tfG_z$ attached to the character
$\tilde\phi$ of $\tfS$.  Since $\tilde\phi$ is the nontrivial
quadratic character $\bar\eta$ of $\tfS$, it extends to $\tfG_z$, and it is
easily seen that the latter series consists of the representations
$\bar\eta$ and $\St_{\tfG_z}\cdot\,\bar\eta$.

Since the base change lift of the $L$-
or $A$-packet $\Pi$ is an irreducible constituent of 
$\ind_\tB^\tG\tilde\lambda$,
we have (as in the proof of
Proposition~\ref{prop:A-packet}) 
$$
\left(\ind_\tB^\tG\tilde\lambda\right)^{\tG_z+}\cong
\ind_{\tfB_z}^{\tfG_z}\tilde{\bar\lambda},
$$
so $\tilde\pi^{\tG_z+}$ is a subrepresentation of 
$\ind_{\tfB_z}^{\tfG_z}\tilde{\bar\lambda}$.

Since $\lambda_1|_{F\mult} = \omega_{E/F}|\cdot|_F^{\pm
  1}$, one readily computes that the character $\tilde{\bar\lambda}$ of
$\tfM$ is equal to
$(\bar\eta\circ\det)\otimes{\bf 1}$, where the factorization is with
  respect to the decomposition $\tfM = \tfM^0 \times \tfZ$.  Thus 
$$
\ind_{\tfB_z}^{\tfG_z}\tilde{\bar\lambda}\cong \bar\eta\oplus
\left(\St_{\tfG_z}\cdot\,\bar\eta\right).
$$
Hence $\tilde\pi$ contains a representation in the $\varepsilon$-lift
of the rational series containing $\bar\sigma$.

\begin{bibdiv}
\begin{biblist}




\bib{adler-lansky:bc-u3-unram}{article}{
  author={Adler, Jeffrey D.},
  author={Lansky, Joshua M.},
  title={Depth-zero base change for unramified ${\rm U}(2,1)$},
  journal={J. Number Theory},
  volume={114},
  date={2005},
  number={2},
  pages={324\ndash 360},
  issn={0022-314X},
  review={\MR {2167974}},
  eprint={arXiv:math.RT/0601695},
  note={Printer's error corrected in vol. 121 (2006), no. 1, 186.},
}

\bib{adler-lansky:lifting}{article}{
  author={Adler, Jeffrey D.},
  author={Lansky, Joshua M.},
  title={Lifting representations of finite reductive groups},
  status={in preparation},
}

\bib{arthur-clozel:trace}{book}{
  author={Arthur, James},
  author={Clozel, Laurent},
  title={Simple algebras, base change, and the advanced theory of the trace formula},
  series={Annals of Mathematics Studies},
  volume={120},
  publisher={Princeton University Press},
  place={Princeton, NJ},
  date={1989},
  pages={xiv+230},
  isbn={0-691-08517-X},
  isbn={0-691-08518-8},
  review={\MR {1007299 (90m:22041)}},
}

\bib{badulescu:thesis}{thesis}{
  author={Badulescu, A.},
  title={Correspondance entre ${\rm GL}_n$ et ses formes int\'erieures en caract\'eristique positive},
  organization={Universit\'e de Paris-Sud},
  type={Ph. D. Thesis},
  year={1999},
}

\bib{badulescu:jacquet-langlands}{article}{
  author={Badulescu, Alexandru Ioan},
  title={Correspondance de Jacquet-Langlands pour les corps locaux de caract\'eristique non nulle},
  language={French, with English and French summaries},
  journal={Ann. Sci. \'Ecole Norm. Sup. (4)},
  volume={35},
  date={2002},
  number={5},
  pages={695--747},
  issn={0012-9593},
  review={\MR {1951441 (2004i:11048)}},
}

\bib{blasco:u3-bc-1}{article}{
  author={Blasco, Laure},
  title={Types, paquets et changement de base: l'exemple de $U(2,1)(F_0)$. I. Types simples maximaux et paquets singletons},
  journal={Canad. J. Math.},
  volume={60},
  number={4},
  date={2008},
  pages={790\ndash 821},
}

\bib{blasco:u11-bc}{article}{
  author={Blasco, Laure},
  title={Changements de base explicites des repr\'esentations supercuspidales de $U(1,1)(F_0)$},
  date={2009-09-18},
  eprint={arXiv:0909.3476},
}

\bib{bruhat-tits:reductive-groups-1}{article}{
  author={Bruhat, Fran\c cois},
  author={Tits, Jacques},
  title={Groupes r\'eductifs sur un corps local},
  language={French},
  journal={Publ. Math. Inst. Hautes \'Etudes Sci.},
  volume={41},
  date={1972},
  pages={5\ndash 251},
  issn={0073-8301},
  review={\MR {0327923 (48 \#6265)}},
}

\bib{bruhat-tits:reductive-groups-2}{article}{
  author={Bruhat, Fran\c cois},
  author={Tits, Jacques},
  title={Groupes r\'eductifs sur un corps local. II. Sch\'emas en groupes. Existence d'une donn\'ee radicielle valu\'ee},
  language={French},
  journal={Publ. Math. Inst. Hautes \'Etudes Sci.},
  volume={60},
  date={1984},
  pages={197\ndash 376},
  issn={0073-8301},
  review={\MR {756316 (86c:20042)}},
}

\bib{bushnell-henniart:local-tame-1}{article}{
  author={Bushnell, Colin J.},
  author={Henniart, Guy},
  title={Local tame lifting for ${\rm GL}(N)$. I. Simple characters},
  journal={Inst. Hautes \'Etudes Sci. Publ. Math.},
  number={83},
  date={1996},
  pages={105\ndash 233},
  issn={0073-8301},
  review={\MR {1423022 (98m:11129)}},
}

\bib{bushnell-kutzko:smooth}{article}{
  author={Bushnell, Colin J.},
  author={Kutzko, Philip C.},
  title={Smooth representations of reductive $p$-adic groups: structure theory via types},
  journal={Proc. London Math. Soc. (3)},
  volume={77},
  date={1998},
  number={3},
  pages={582\ndash 634},
  issn={0024-6115},
  review={\MR {1643417 (2000c:22014)}},
}

\bib{bushnell-henniart:local-tame-2}{article}{
  author={Bushnell, Colin J.},
  author={Henniart, Guy},
  title={Local tame lifting for ${\rm GL}(n)$. II. Wildly ramified supercuspidals},
  language={English, with English and French summaries},
  journal={Ast\'erisque},
  number={254},
  date={1999},
  pages={vi+105},
  issn={0303-1179},
  review={\MR {1685898 (2000d:11147)}},
}

\bib{bushnell-henniart:bc-glp}{article}{
  author={Bushnell, Colin J.},
  author={Henniart, Guy},
  title={Explicit unramified base change: ${\rm GL}(p)$ of a $p$-adic field},
  journal={J. Number Theory},
  volume={99},
  date={2003},
  number={1},
  pages={74\ndash 89},
  issn={0022-314X},
  review={\MR {1957245 (2003m:22018)}},
}

\bib{bushnell-henniart:local-tame-4}{article}{
  author={Bushnell, Colin J.},
  author={Henniart, Guy},
  title={Local tame lifting for ${\rm GL}(n)$. IV. Simple characters and base change},
  journal={Proc. London Math. Soc. (3)},
  volume={87},
  date={2003},
  number={2},
  pages={337\ndash 362},
  issn={0024-6115},
  review={\MR {1990931 (2004f:22017)}},
}

\bib{carter:finite}{book}{
  author={Carter, Roger W.},
  title={Finite groups of Lie type},
  series={Wiley Classics Library},
  publisher={John Wiley \& Sons Ltd.},
  place={Chichester},
  date={1993},
  pages={xii+544},
  isbn={0-471-94109-3},
  review={\MR {1266626 (94k:20020)}},
}

\bib{clozel:real_bc}{article}{
  author={Clozel, Laurent},
  title={Changement de base pour les repr\'esentations temp\'er\'ees des groupes r\'eductifs r\'eels},
  language={French},
  journal={Ann. Sci. \'Ecole Norm. Sup. (4)},
  volume={15},
  date={1982},
  number={1},
  pages={45--115},
  issn={0012-9593},
  review={\MR {672475 (84j:22015)}},
}

\bib{deligne-lusztig:finite}{article}{
  author={Deligne, Pierre},
  author={Lusztig, George},
  title={Representations of reductive groups over finite fields},
  journal={Ann. of Math. (2)},
  volume={103},
  date={1976},
  number={1},
  pages={103\ndash 161},
  issn={0003-486X},
  review={\MR {0393266 (52 \#14076)}},
}

\bib{dkv:amt}{article}{
  author={Deligne, Pierre},
  author={Kazhdan, David},
  author={Vign{\'e}ras, Marie-France},
  title={Repr\'esentations des alg\`ebres centrales simples $p$-adiques},
  booktitle={Representations of reductive groups over a local field},
  language={French},
  series={Travaux en Cours},
  pages={33\ndash 117},
  publisher={Hermann},
  place={Paris},
  date={1984},
  review={\MR {771672 (86h:11044)}},
}

\bib{digne:arcata}{article}{
  author={Digne, Fran{\c {c}}ois},
  title={Shintani descent and ${\scr L}$ functions on Deligne-Lusztig varieties},
  conference={ title={The Arcata Conference on Representations of Finite Groups (Arcata, Calif., 1986)}, },
  book={ series={Proc. Sympos. Pure Math.}, volume={47}, publisher={Amer. Math. Soc.}, place={Providence, RI}, },
  date={1987},
  pages={61--68},
  review={\MR {933350 (89b:20083)}},
}

\bib{digne-lehrer-michel:nonconnected-center}{article}{
  author={Digne, Fran{\c {c}}ois},
  author={Lehrer, G. I.},
  author={Michel, J.},
  title={The characters of the group of rational points of a reductive group with nonconnected centre},
  journal={J. Reine Angew. Math.},
  volume={425},
  date={1992},
  pages={155--192},
  issn={0075-4102},
  review={\MR {1151318 (93c:20071)}},
}

\bib{digne:shintani}{article}{
  author={Digne, Fran{\c {c}}ois},
  title={Descente de Shintani et restriction des scalaires},
  language={French, with English summary},
  journal={J. London Math. Soc. (2)},
  volume={59},
  date={1999},
  number={3},
  pages={867\ndash 880},
  issn={0024-6107},
  review={\MR {1709085 (2001a:20081)}},
}

\bib{flicker:gl3}{book}{
  author={Flicker, Yuval Z.},
  title={The trace formula and base change for ${\rm GL}(3)$},
  series={Lecture Notes in Mathematics},
  volume={927},
  publisher={Springer--Verlag},
  place={Berlin},
  date={1982},
  pages={xii+204},
  isbn={3-540-11500-5},
  review={\MR {663002 (84d:10035)}},
}

\bib{fulton-harris:rep-theory}{book}{
  author={Fulton, William},
  author={Harris, Joe},
  title={Representation theory},
  series={Graduate Texts in Mathematics},
  volume={129},
  note={A first course; Readings in Mathematics},
  publisher={Springer--Verlag},
  place={New York},
  date={1991},
  pages={xvi+551},
  isbn={0-387-97527-6},
  isbn={0-387-97495-4},
  review={\MR {1153249 (93a:20069)}},
}

\bib{gyoja:liftings}{article}{
  author={Gyoja, Akihiko},
  title={Liftings of irreducible characters of finite reductive groups},
  journal={Osaka J. Math.},
  volume={16},
  date={1979},
  number={1},
  pages={1\ndash 30},
  issn={0030-6126},
  review={\MR {527011 (80k:20044)}},
}

\bib{kawanaka:liftings-classical-1}{article}{
  author={Kawanaka, Noriaki},
  title={Liftings of irreducible characters of finite classical groups. I},
  journal={J. Fac. Sci. Univ. Tokyo Sect. IA Math.},
  volume={28},
  date={1981},
  number={3},
  pages={851\ndash 861 (1982)},
  issn={0040-8980},
  review={\MR {656060 (83g:20047)}},
}

\bib{kawanaka:liftings-classical-2}{article}{
  author={Kawanaka, Noriaki},
  title={Liftings of irreducible characters of finite classical groups. II},
  journal={J. Fac. Sci. Univ. Tokyo Sect. IA Math.},
  volume={30},
  date={1984},
  number={3},
  pages={499\ndash 516},
  issn={0040-8980},
  review={\MR {731514 (85f:20037)}},
}

\bib{kawanaka:shintani}{article}{
  author={Kawanaka, Noriaki},
  title={Shintani lifting and Gel\cprime fand-Graev representations},
  booktitle={The Arcata Conference on Representations of Finite Groups (Arcata, Calif., 1986)},
  series={Proc. Sympos. Pure Math.},
  volume={47},
  pages={147\ndash 163},
  publisher={Amer. Math. Soc.},
  place={Providence, RI},
  date={1987},
  review={\MR {933357 (89h:22037)}},
}

\bib{keys:thesis}{article}{
  author={Keys, Charles David},
  title={On the decomposition of reducible principal series representations of $p$-adic Chevalley groups},
  journal={Pacific J. Math.},
  volume={101},
  date={1982},
  number={2},
  pages={351\ndash 388},
  issn={0030-8730},
  review={\MR {675406 (84d:22032)}},
}

\bib{jkim-ps:theta10}{article}{
  author={Kim, Ju-Lee},
  author={Piatetski-Shapiro, Ilya I.},
  title={Quadratic base change of $\theta \sb {10}$},
  journal={Israel J. Math.},
  volume={123},
  date={2001},
  pages={317\ndash 340},
  issn={0021-2172},
  review={\MR {1835303 (2002f:11179)}},
}

\bib{kottwitz-shelstad:twisted-endoscopy}{article}{
  author={Kottwitz, Robert E.},
  author={Shelstad, Diana},
  title={Foundations of twisted endoscopy},
  language={English, with English and French summaries},
  journal={Ast\'erisque},
  number={255},
  date={1999},
  pages={vi+190},
  issn={0303-1179},
  review={\MR {1687096 (2000k:22024)}},
}

\bib{langlands:gl2}{book}{
  author={Langlands, Robert P.},
  title={Base change for ${\rm GL}(2)$},
  series={Annals of Mathematics Studies},
  volume={96},
  publisher={Princeton University Press},
  place={Princeton, N.J.},
  date={1980},
  pages={vii+237},
  isbn={0-691-08263-4},
  isbn={0-691-08272-3},
  review={\MR {574808 (82a:10032)}},
}

\bib{lansky-raghuram:depth}{article}{
  author={Lansky, Joshua and Raghuram A.},
  title={On the correspondence of representations between {${\rm GL}(n)$} and division algebras},
  journal={Proc.~Amer.~Math.~Soc.},
  volume={131},
  date={2003},
  number={5},
  pages={1641\ndash 1648},
  issn={0-691-08263-4},
  review={\MR {1950297 (2003m:22021)}},
}

\bib{lusztig:disconnected-centre}{article}{
  author={Lusztig, George},
  title={On the representations of reductive groups with disconnected centre},
  note={Orbites unipotentes et repr\'esentations, I},
  journal={Ast\'erisque},
  number={168},
  date={1988},
  pages={10, 157--166},
  issn={0303-1179},
  review={\MR {1021495 (90j:20083)}},
}

\bib{morris:symplectic}{article}{
  author={Morris, Lawrence},
  title={Some tamely ramified supercuspidal representations of symplectic groups},
  journal={Proc. London Math. Soc. (3)},
  volume={63},
  date={1991},
  number={3},
  pages={519\ndash 551},
  issn={0024-6115},
  review={\MR {1127148 (92i:22017)}},
}

\bib{moy-prasad:k-types}{article}{
  author={Moy, Allen},
  author={Prasad, Gopal},
  title={Unrefined minimal $K$-types for $p$-adic groups},
  journal={Invent. Math.},
  volume={116},
  date={1994},
  number={1--3},
  pages={393\ndash 408},
  issn={0020-9910},
  review={\MR {1253198 (95f:22023)}},
}

\bib{moy-prasad:jacquet}{article}{
  author={Moy, Allen},
  author={Prasad, Gopal},
  title={Jacquet functors and unrefined minimal $K$-types},
  journal={Comment. Math. Helv.},
  volume={71},
  date={1996},
  number={1},
  pages={98\ndash 121},
  issn={0010-2571},
  review={\MR {1371680 (97c:22021)}},
}

\bib{pan:theta-depth-zero}{article}{
  author={Pan, Shu-Yen},
  title={Local theta correspondence of depth zero representations and theta dichotomy},
  journal={J. Math. Soc. Japan},
  volume={54},
  date={2002},
  number={4},
  pages={793--845},
  issn={0025-5645},
  review={\MR {1921088 (2003f:11070)}},
}

\bib{dprasad:theta-unitary}{article}{
  author={Prasad, Dipendra},
  title={Theta correspondence for unitary groups},
  journal={Pacific Journal of Mathematics},
  volume={194},
  number={2},
  year={2000},
  pages={427\ndash 438},
}

\bib{rogawski:amt}{article}{
  author={Rogawski, Jonathan D.},
  title={Representations of ${\rm GL}(n)$ and division algebras over a $p$-adic field},
  journal={Duke Math. J.},
  volume={50},
  date={1983},
  number={1},
  pages={161\ndash 196},
  issn={0012-7094},
  review={\MR {700135 (84j:12018)}},
}

\bib{rogawski:u3}{book}{
  author={Rogawski, Jonathan D.},
  title={Automorphic representations of unitary groups in three variables},
  series={Annals of Mathematics Studies},
  volume={123},
  publisher={Princeton University Press},
  place={Princeton, NJ},
  date={1990},
  pages={xii+259},
  isbn={0-691-08586-2},
  isbn={0-691-08587-0},
  review={\MR {1081540 (91k:22037)}},
}

\bib{rousseau:thesis}{thesis}{
  author={Rousseau, Guy},
  title={Immeubles des groupes r\'eductifs sur les corps locaux},
  organization={Univ. Paris XI},
  type={Ph. D. Thesis},
  year={1977},
}

\bib{schneider-stuhler:sheaves}{article}{
  author={Schneider, Peter},
  author={Stuhler, Ulrich},
  title={Representation theory and sheaves on the Bruhat--Tits building},
  journal={Inst. Hautes \'Etudes Sci. Publ. Math.},
  number={85},
  date={1997},
  pages={97\ndash 191},
  issn={0073-8301},
  review={\MR {1471867 (98m:22023)}},
}

\bib{shintani:sq-int}{article}{
  author={Shintani, Takuro},
  title={On certain square-integrable irreducible unitary representations of some ${\germ p}$-adic linear groups},
  journal={J. Math. Soc. Japan},
  volume={20},
  date={1968},
  pages={522\ndash 565},
  review={\MR {0233931 (38 \#2252)}},
}

\bib{silberger-zink:level-zero-matching}{article}{
  author={Silberger, Allan J.},
  author={Zink, Ernst-Wilhelm},
  title={An explicit matching theorem for level zero discrete series of unit groups of $\germ p$-adic simple algebras},
  journal={J. Reine Angew. Math.},
  volume={585},
  date={2005},
  pages={173\ndash 235},
  issn={0075-4102},
  review={\MR {2164626}},
}

\bib{srinivasan:finite-book}{book}{
  author={Srinivasan, Bhama},
  title={Representations of finite Chevalley groups},
  series={Lecture Notes in Mathematics},
  volume={764},
  note={A survey},
  publisher={Springer--Verlag},
  place={Berlin},
  date={1979},
  pages={x+177},
  isbn={3-540-09716-3},
  review={\MR {551499 (83a:20054)}},
}

\bib{steinberg:endomorphisms}{book}{
  author={Steinberg, Robert},
  title={Endomorphisms of linear algebraic groups},
  series={Memoirs of the American Mathematical Society, No. 80},
  publisher={American Mathematical Society},
  place={Providence, R.I.},
  date={1968},
  pages={108},
  review={\MR {0230728 (37 \#6288)}},
}

\bib{stevens:classical-sc}{article}{
  author={Stevens, Shaun},
  title={The supercuspidal representations of $p$-adic classical groups},
  journal={Inventiones Math.},
  volume={172},
  pages={289\ndash 352},
  year={2008},
  eprint={arXiv:math.RT/0607622},
}

\bib{yu:supercuspidal}{article}{
  author={Yu, Jiu-Kang},
  title={Construction of tame supercuspidal representations},
  journal={J. Amer. Math. Soc.},
  volume={14},
  date={2001},
  number={3},
  pages={579\ndash 622 (electronic)},
  issn={0894-0347},
  review={\MR {1824988 (2002f:22033)}},
}

\end{biblist}
\end{bibdiv}

\end{document}